\newcommand{\package}{\texttt{Boscia.jl}}
\newcolumntype{H}{>{\setbox0=\hbox\bgroup}c<{\egroup}@{}}
\definecolor{brandeisblue}{rgb}{0.0, 0.44, 1.0}
\newcommand{\revision}[1]{{ #1 }}
\newtheorem{assumption}{Assumption}
\newtheorem{theorem}{Theorem}
\newtheorem{definition}{Definition}
\definecolor{RED}{rgb}{1,0,0}\definecolor{BLUE}{rgb}{0,0,1}
\newif\ifarxiv
\title{Convex mixed-integer optimization with Frank-Wolfe methods}
\author{\name Deborah Hendrych \email \href{mailto:Hendrych@zib.de}{hendrych@zib.de}\\
\addr Freie Universit\"at Berlin, Germany\\
\addr Zuse Institute Berlin, Germany
\AND
\name Hannah Troppens \email \href{mailto:Troppens@zib.de}{troppens@zib.de}\\
\addr Freie Universit\"at Berlin, Germany\\
\addr Zuse Institute Berlin, Germany
\AND
\name Mathieu Besançon \email \href{mailto:besancon@zib.de}{besancon@zib.de} \\
\addr Zuse Institute Berlin, Germany
\AND
\name Sebastian Pokutta \email \href{mailto:pokutta@zib.de}{pokutta@zib.de} \\
\addr Technische Universit\"at Berlin, Germany\\
Zuse Institute Berlin, Germany
}
\DeclareMathOperator*{\argmax}{argmax}
\DeclareMathOperator*{\argmin}{argmin}
\newcommand{\innp}[2]{\left\langle #1, #2 \right\rangle}
\newcommand{\norm}[1]{\left\| #1 \right\|}
\newcommand{\vx}{\mathbf{x}}
\newcommand{\vvv}{\mathbf{v}}
\newcommand{\vd}{\mathbf{d}}
\newcommand{\vc}{\mathbf{c}}
\newcommand{\vl}{\mathbf{l}}
\newcommand{\vu}{\mathbf{u}}
\newcommand{\Xset}{{\ensuremath{\mathcal{X}}}\xspace}
\begin{document}

\maketitle

\begin{abstract}
Mixed-integer nonlinear optimization encompasses a broad class of problems that present both theoretical and computational challenges.
We propose a new type of method to solve these problems based on a branch-and-bound algorithm with convex node relaxations.
These relaxations are solved with a Frank-Wolfe algorithm over the convex hull of mixed-integer feasible points instead of the continuous relaxation
via calls to a mixed-integer linear solver as the linear minimization oracle.
The proposed method computes feasible solutions while working on a single representation of the polyhedral constraints,
leveraging the full extent of mixed-integer linear solvers without an outer approximation scheme
and can exploit inexact solutions of node subproblems.
\end{abstract}

\section{Introduction}

Mixed-integer nonlinear optimization problems (MINLP) are a challenging class combining both
combinatorial structures and nonlinearities which can model a broad range of problems arising in engineering,
transportation, and more generally operations and other application contexts.
Combinatorial constraints can capture rich properties required for a solution, e.g.,~solutions that must be a path,
cycle, or tour in a graph, or solutions with maximum support or guaranteed sparsity.
The dominant algorithmic frameworks for solving such problems are combinations of branch-and-bound (BnB) with spatial and
integer branching, and cutting planes added iteratively.

We focus in this paper on mixed-integer convex problems in which the nonlinear constraints and objectives are convex and present a new
algorithmic framework for solving these problems that exploit recent advances in so-called Frank-Wolfe (FW) or Conditional Gradient (CG) methods.
The problem class we consider is of the type:
\begin{align}\label{prob:definition}
\min_{\vx \in \mathbb{R}^n}\, f(\vx) \,\,\text{s.t.}\, \vx \in \Xset,
\end{align}
where \Xset is a compact nonconvex set admitting a \emph{boundable linear minimization oracle} (BLMO), i.e.,~a set over which optimizing
a linear function can be done efficiently (comparatively to the original problem), even when bounds are added or modified.
Formally, we consider we have access to an oracle taking new bounds $(\mathbf{l}, \mathbf{u})$ and a direction $\mathbf{d}$:
\begin{align}\tag{B-LMO}
(\mathbf{l},\mathbf{u},\mathbf{d}) \in \mathbb{R}^n\times \mathbb{R}^n\times \mathbb{R}^n \rightarrow \argmin_{\vvv \in \mathbb{R}^n}\, \innp{\vvv}{\mathbf{d}} \,\,\text{s.t.}\,\, \vvv \in \Xset \cap \left[\mathbf{l},\mathbf{u}\right].
\end{align}
We assume that one source of nonconvexity of \Xset stems from integrality constraints and
will formalize that requirement in the following section. Given $J \subseteq \left\{1\dots n\right\}$, we will denote:
\begin{align*}
    & \Xset = \overline{\Xset} \cap \mathbb{Z}_J,\\
    \text{where }\, & \mathbb{Z}_J = \left\{\vx\in \mathbb{R}^n, \vx_j \in \mathbb{Z}\, \forall j \in J \right\},
\end{align*}
with $\overline{\Xset}$ a continuous relaxation of \Xset (which is in general not unique).
A typical example for \Xset includes polyhedral and integrality constraints, and combinatorial structures (which we will detail in the following sections).
The objective $f: \text{conv}(\Xset) \rightarrow \mathbb{R}$ is a differentiable Lipschitz-smooth convex function with a gradient
accessed as an oracle, i.e.,~we do not require an expression graph.
The proposed framework extends to nonconvex objectives when the nonconvexity affects only binary variables and exact convexifiers can be built by adding convex terms vanishing on $\{0,1\}$ \citep{crama2022quadratization}.
It can also be extended to non-smooth generalized self-concordant functions using the methods proposed in \citet{carderera2021simple,carderera2024scalable}.
In the more general non-convex case, the framework can be used as a heuristic to compute high-quality solutions.

Several families of MINLP methods have been developed over the last decades, rooted in both nonlinear and discrete optimization approaches;
see \citet{kronqvist2019review} for a recent review of methods for convex MINLPs.
Some solvers such as \texttt{Knitro}~\citep{byrd2006k} implement a BnB algorithm that solves a nonlinear relaxation at each node of the tree.
These subproblems are however expensive: typical algorithms are interior points or sequential quadratic programming, both relying on expensive second-order information and yielding high-accuracy solutions.
An alternative family of approaches was introduced in \citet{quesada1992lp}, solving Linear Programs (LPs) at each node and nonlinear problems only to compute
feasible solutions and additional cuts when integer solutions are found.
An LP-based method is also at the core of the \revision{solution process of \texttt{COUENNE}~\citep{belotticouenne} and} \texttt{SCIP} \citep{bestuzheva2021scip,bestuzheva2023enabling}, with additional techniques handling nonconvex nonlinearities.
The solvers \texttt{SHOT} \citep{lundell2022supporting,kronqvist2016extended} and \texttt{Pavito.jl} \citep{pavito} use single-tree polyhedral outer approximation approaches, relying on the Mixed-Integer Programming (MIP) solver to build the main relaxation of the problem,
similar to the framework proposed in \citet{bertsimas2021unified} for mixed-binary convex problems.
\texttt{Bonmin}~\citep{BONAMI2008186} uses a hybrid approach with both outer polyhedral approximations and BnB.
\revision{As noteworthy commercial MINLP solvers, \texttt{Antigone}~\citep{misener2014antigone} hybridizes MIP and LP relaxations and NLP subproblems to compute feasible solutions,
while \texttt{BARON}~\citep{sahinidis1996baron} primarily solves LP relaxations in a BNB with polyhedral under- and overestimators for nonlinearities, but exploits a portfolio of MIP and NLP bounds to selectively improve on the LP relaxation at nodes~\citep{kilincc2018exploiting}.}
Early termination techniques have been exploited for other convex relaxations.
In \citet{fletcher1998numerical} and \citet{buchheim2016feasible}, mixed-integer quadratic problems are handled with early termination and
warm starts in the context of active set methods, while \citet{liang2020early} consider infeasible interior-point methods.
In \citet{chen2022early}, mixed-integer conic problems are tackled with ADMM using inexact termination
and a safe dual bound recovery in a BnB framework.
Unlike ADMM, Frank-Wolfe is a primal approach with a lower bound computed at each iteration that can be used to terminate
node processing and derive a global tree bound. Furthermore, our BLMO-based solver avoids projections onto cones which can be expensive (requiring eigendecompositions)
or numerically unstable (for exponential and positive semidefinite cones for instance).
It is important to note that in many MINLP solution approaches, nonlinearities are handled through separation (via gradient cuts or supporting hyperplane cuts).
Separation is not possible if the nonlinearity is in the objective and most solvers therefore allow for
nonlinearities in the constraints only, requiring an epigraph formulation of the objective function
losing the initial structure of Problem \eqref{prob:definition}.
Some lines of work focused on specialized BnB approaches for some Sparse Regression or best subset selection problems
\citep{bertsimas2016best,hazimeh2020fast,gomez2021mixed,moreira2022alternating}.
These approaches however rely on the properties of the quadratic loss and/or of the constraint set and do not extend to other loss functions (e.g.,~logistic or Poisson regression),
additional constraints (arbitrary linear constraints on the features, grouped subset selection, integer predictor),
or other MINLPs of the form \eqref{prob:definition}.

Ideas for warm-starting Frank-Wolfe variants based on the active set representation of the solution (i.e.,~as a convex combination of vertices)
have been explored in the past;
in \citet{moondra2021reusing}, an Away-step FW (AFW) variant leveraging active sets from previous iterations in an online optimization setting.
In \citet{joulin2014efficient}, FW or variants thereof are called over the integer hull of combinatorial problems,
calling a MIP solver as BLMO and additional roundings to compute feasible solutions of good quality.
The hardness of optimization over the integer hull motivated \emph{lazified} conditional gradient approaches in \citet{braun2017lazifying},
\revision{i.e., FW-based approaches which at some steps replace the Linear Minimization Oracle (LMO) with an inexact procedure yielding an improving direction, as long as these directions provide sufficient progress.
The control of the expected progress ensures lazified methods converge at a rate similar to their exact counterpart, while drastically reducing the number of linear minimization oracle calls.}
We however generalize lazification techniques to consider the whole tree.
BnB with AFW has been explored in \citet{buchheim2018frank} for a portfolio problem,
each subproblem is solved inexactly to compute lower bounds from the FW gap, and each branching performs fixing to all possible values of the integer variable instead of adding bounds.
In contrast, we leverage the Blended Pairwise Conditional Gradient (BPCG) algorithm \citep{tsuji2021sparser} which enables us to aggressively reuse information (from computed vertices);
its convergence speed is typically higher than AFW while the iterates remain sparser. This sparsity greatly benefits branching as the fractionality of solutions is lower in most cases.
We point interested readers to \citet{CGFWSurvey2022} for a general overview of FW methods.

\subsubsection*{Our approach and contribution}
We propose a different solution approach to Problem~\eqref{prob:definition},
consisting of a branch-and-bound process over the convex hull of the feasible region with inexact node processing.
A key feature is that at each node, FW solves the nonlinear subproblem over the convex hull of integer-feasible solutions or \emph{integer hull} and not over the continuous relaxation.
This is allowed by solving a MIP as the LMO within the FW solving process, thus resulting in vertices of the integer hull.
No other access to the feasible region (e.g., complete description as linear inequalities)
or the objective function (e.g., Hessian information or expression graph) is required.
In particular, we do not require epigraph formulations, and we retain the original polyhedral structure;
see also Figure~\ref{fig:minlpframeworks} for a schematic overview.
The central novel elements of our approach are:
\begin{enumerate}
	\item Instead of solving weaker continuous relaxations at each node of the branch-and-bound tree,
	we optimize over the convex hull of integer-feasible solutions by solving mixed-integer linear problems in the BLMO,
    obtaining stronger relaxations,	several feasible solutions at each node, and reducing the size of the tree;
	\item We replace exact convex solvers with a Frank-Wolfe-based error-adaptive solution process, i.e.,~a solution process
	for which the amount of computations performed is increasing for higher accuracies;
	\item We leverage the active set representation of the solution at each node to warm-start the children iterates and reduce the number of calls to the Mixed-Integer Programming (MIP) solver;
	\item We develop new lazification techniques and strong branching strategies for our framework;
    \item We exploit the convexity and Frank-Wolfe gap to tighten bounds at each node, further reducing the search space.
\end{enumerate}

One motivation for outer approximation is the ability to leverage a MIP solver through single-tree approaches
i.e.,~the MIP solver is called only once and coordinates the whole solving process with separation callbacks.
This approach however carries several issues:
\revision{in its pure form, only near-feasible and near-optimal solutions are obtained in the limit towards the end of the solution process. Solvers thus typically rely on MIP heuristics for primal feasible solutions.}
Furthermore, the separation constraints for the nonlinear feasible set, often linear cutting planes, \revision{can, in general, be dense for multivariate nonlinear expressions}.
These dense cuts slow down the MIP solving process and yield numerically ill-conditioned LP relaxations.
We face none of these issues since we do not rely on outer approximations at all but rather on nonlinear relaxations.
While this may seem inefficient at first and naive realizations would require a very large number of MIP subproblems
to be solved, our framework allows utilizing recent advances in FW and MIP methods to considerably reduce the number of MIP subproblems.
Specifically, our method is the first one, to the best of our knowledge, that exploits the flexibility and advances of modern MIP solvers \revision{within the NLP solving process}, maintaining feasibility throughout the process.


We complement the above with extensive computational results to demonstrate the effectiveness of our approach.
The algorithm is available as a registered Julia package \package{}\footnote{\url{https://github.com/ZIB-IOL/Boscia.jl}} open-sourced under the MIT license.

\begin{figure}
    \centering
    \begin{subfigure}[t]{0.2\textwidth}
        \centering
        \includegraphics[width=0.8\textwidth]{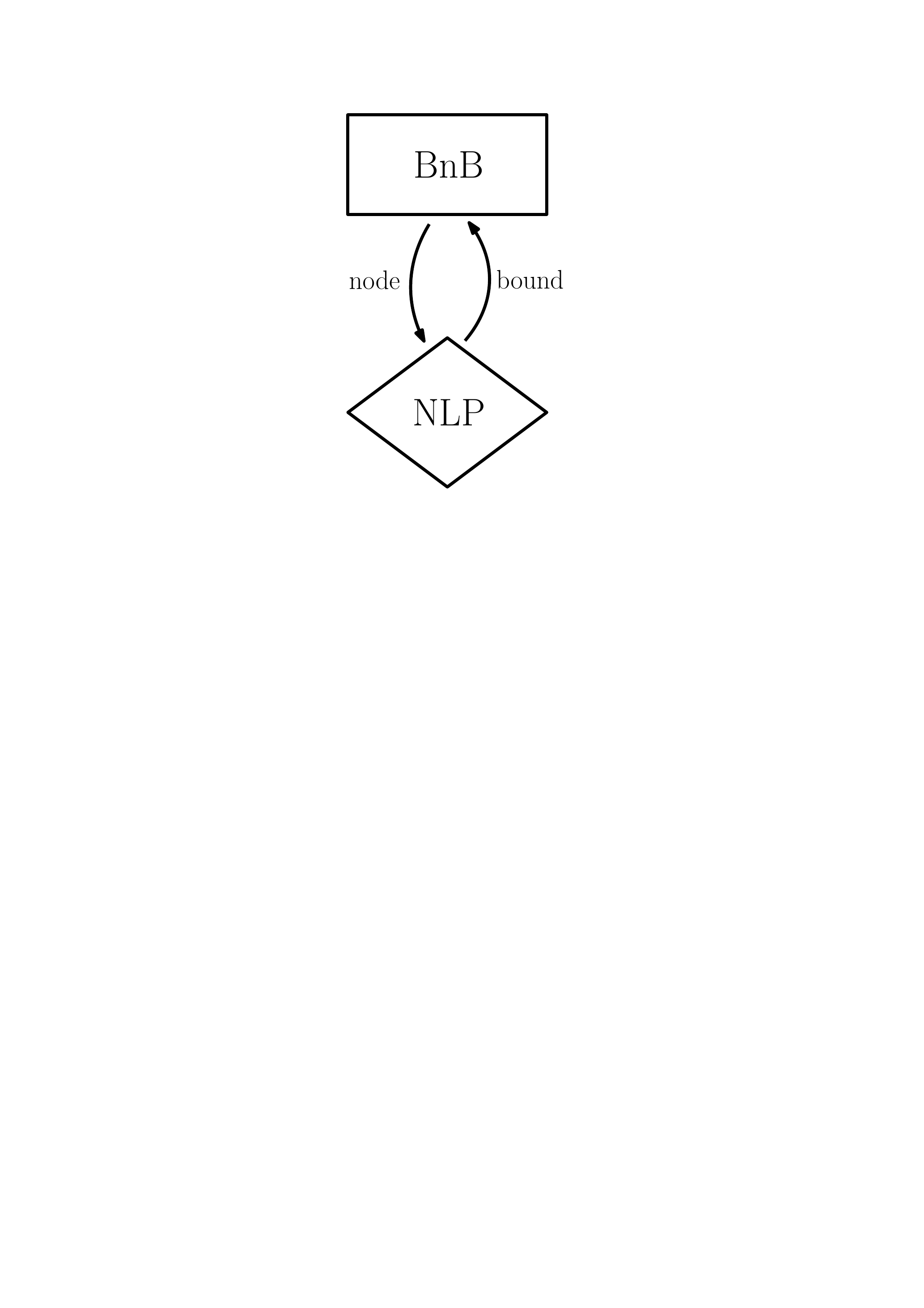}
        \caption{BnB with NLP nodes}
        \label{fig:branchnlp}
    \end{subfigure}
    \hfill
    \begin{subfigure}[t]{0.2\textwidth}
        \centering
        \includegraphics[width=0.8\textwidth]{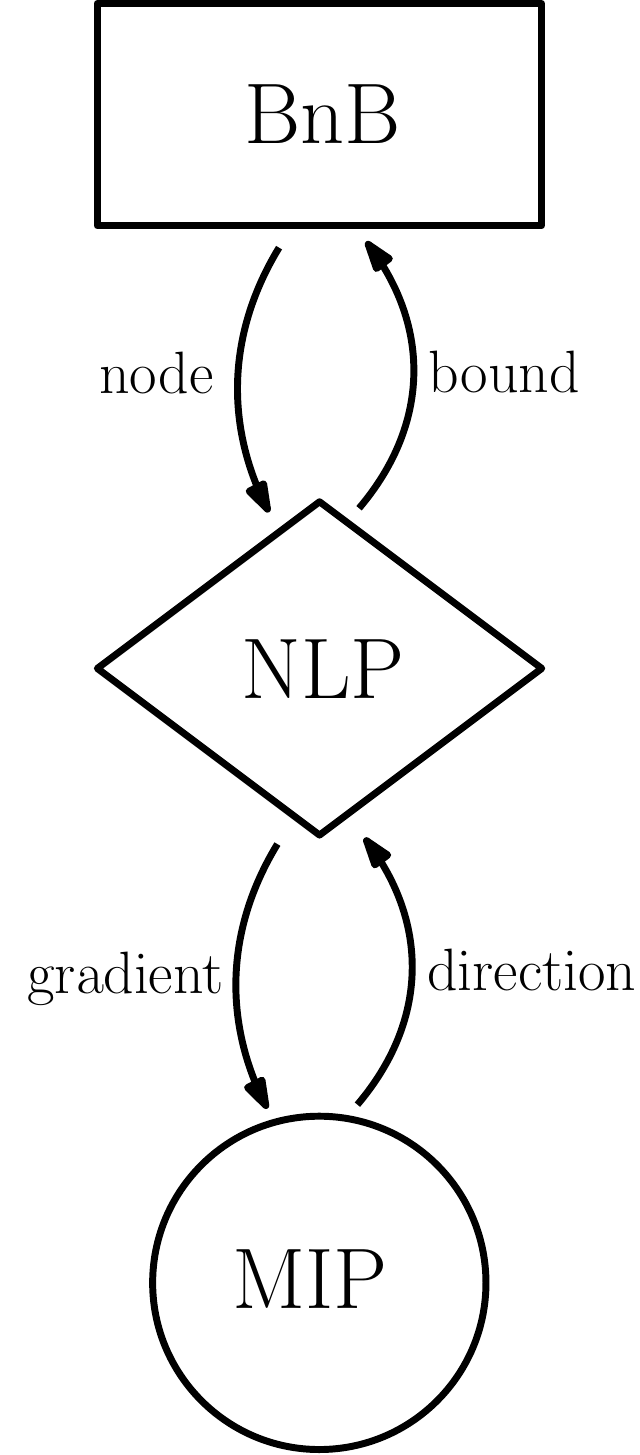}
        \caption{Our approach}
        \label{fig:branchwolfe}
    \end{subfigure}
    \hfill
    \begin{subfigure}[t]{0.4\textwidth}
        \centering
        \includegraphics[width=0.8\textwidth]{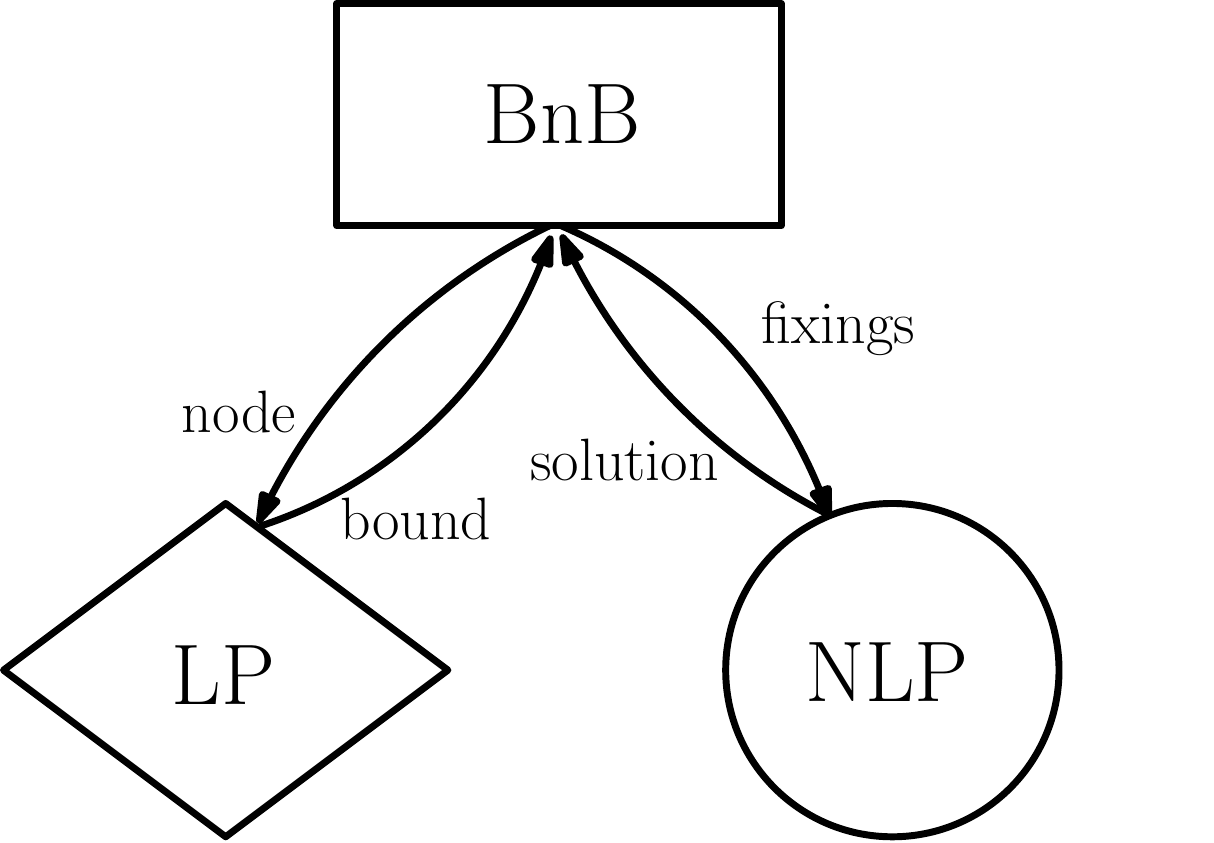}
        \caption{LP-based BnB and outer approximation}
        \label{fig:lpbased}
    \end{subfigure}
       \caption{
        Three main algorithmic frameworks for MINLPs. Diamond blocks represent the nodal relaxations in the given framework.
        \cref{fig:branchnlp} corresponds to a classic BnB framework on top of NLP relaxations, \cref{fig:lpbased} represents
        the mechanism of LP-based MINLP frameworks and outer approximations, \cref{fig:branchwolfe} is our proposed approach with the linearized models
        solved as MIPs within the Frank-Wolfe algorithm on top of which we branch.
        }
       \label{fig:minlpframeworks}
\end{figure}

\section{Nonlinear branch-and-bound over integer hulls with linear oracles}\label{sec:integerhull}

In this section, we present the main paradigm for our framework.
Our approach is primarily based on branch-and-bound, the feasible region handled at each node is identical except for
updated fixings and bounds on the variables that must take integer values.
At each node, we solve a convex relaxation with updated local bounds for the integer variables using a FW-based algorithm providing a relaxed primal solution and lower bound.
This family of algorithms only requires a gradient oracle for the objective function $f$, and not a full analytical expression nor Hessian information.
It accesses the feasible region $P$ through the LMO, which, given a direction $\vd$, solves $\vvv \leftarrow \argmin_{\vx\in P} \langle \vd, \vx \rangle$, where $\vvv$ is an extreme point or vertex of the feasible set.
This means in particular that the feasible region does not have to be given in closed form, e.g., with constraint matrix and right-hand side, as long as the LMO is well-defined with a suitable algorithm implementing it.
Instead of optimizing over the continuous relaxation of Problem~\eqref{prob:definition} with updated bounds at each node,
we optimize over the convex hull of feasible solutions at each node. For that purpose, the LMO called within the FW procedure
is the MIP solver with bounds updated for the node and objective given by the gradient information at the current solution.
This design choice is illustrated in \cref{fig:branchingparadigm}.
\begin{figure}[ht]
    \centering
    \begin{subfigure}[t]{0.35\textwidth}
        \centering
        \includegraphics[width=\textwidth]{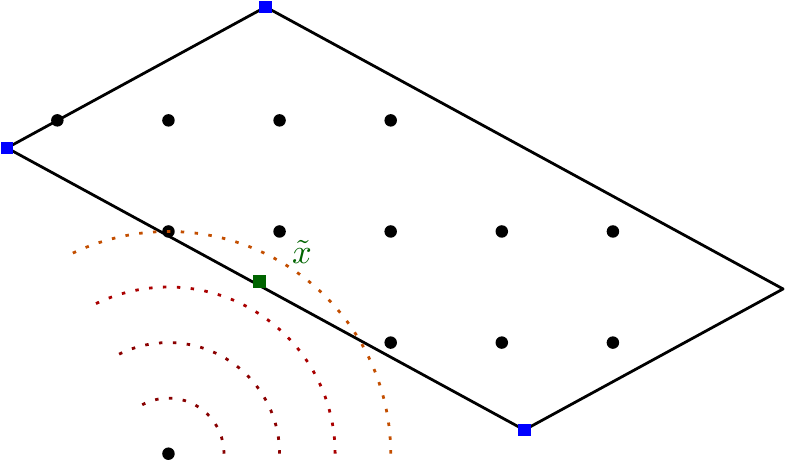}
        \caption{Baseline problem}
        \label{fig:baselineprob}
    \end{subfigure}
    \begin{subfigure}[t]{0.35\textwidth}
        \centering
        \includegraphics[width=\textwidth]{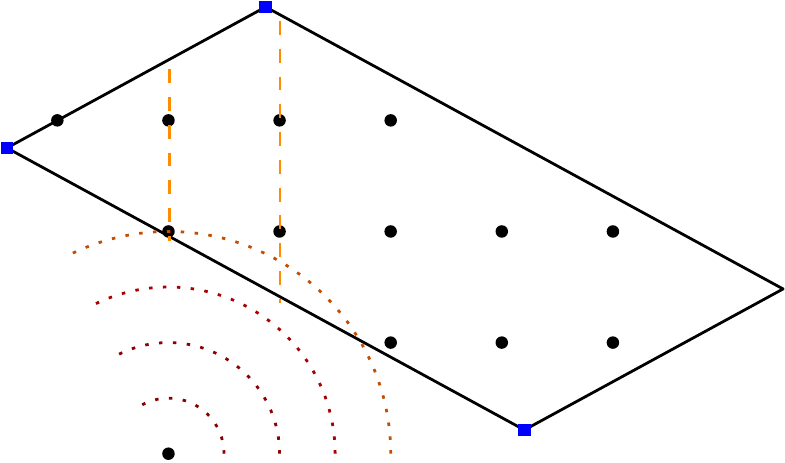}
        \caption{Branching over continuous relaxation}
        \label{fig:branchcontinuous}
    \end{subfigure}
    \begin{subfigure}[t]{0.35\textwidth}
        \centering
        \includegraphics[width=\textwidth]{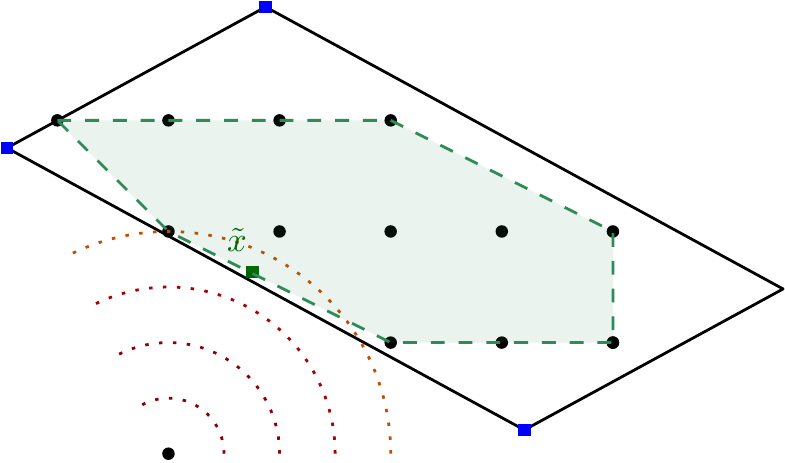}
        \caption{Optimizing over the convex hull}
        \label{fig:convexhull}
    \end{subfigure}
    \begin{subfigure}[t]{0.35\textwidth}
        \centering
        \includegraphics[width=\textwidth]{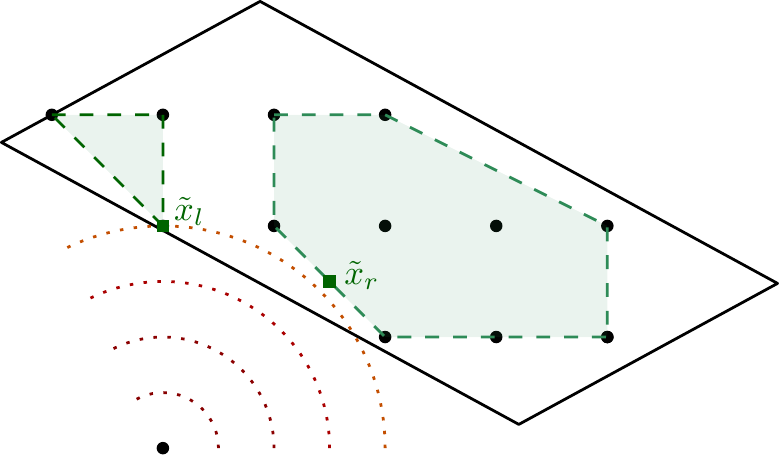}
        \caption{Branching over the convex hull}
        \label{fig:convexhull2}
    \end{subfigure}
\caption{
Branching over the convex hull. \cref{fig:baselineprob} shows the baseline problem with the level curves of the objective, polyhedron, the optimum over the relaxation $\tilde{x}$,
and examples of potentially active vertices for a near-optimal solution (the top vertex is dropped in an optimal solution).
Branching over the continuous relaxation is shown in \cref{fig:branchcontinuous}. Our approach is shown in \cref{fig:convexhull} and \cref{fig:convexhull2},
optimizing over the convex hull with the help of the MIP solver. Branching only once results in an optimal solution 
in the left part in \cref{fig:convexhull2}, with the right part being pruned once $\tilde{x}_l$ is found.
}
\label{fig:branchingparadigm}
\end{figure}

The main algorithm we propose is presented in \cref{alg:boscia}, containing at its core a BnB procedure
with approximate subproblems.
The procedure \texttt{near-optimal\_relaxation\_solve} solves the relaxation to approximate optimality with a specified FW gap
and returns a tuple including the relaxation $\hat{\vx}$, FW gap $g$, active and shadow sets $\mathcal{A}$, $\mathcal{S}$,
and set of primal solutions $\mathcal{H}$ computed through heuristics of our methods or of the MIP solver.
More details on the subproblem solution method are provided in \cref*{app:bpcg}.
$\texttt{best\_bound\_node}(\mathcal{N})$ returns the node with the lowest dual bound.

\begin{algorithm}
\caption{Boscia algorithm for Problem~\eqref{prob:definition} }\label{alg:boscia}
\begin{algorithmic}[1]
\Require Primal-dual tolerance $\delta$, feasible set \Xset as a boundable LMO, $J$ the set of integer variables, objective $f$, initial point $\vvv_0$, FW gap tolerance $\{\varepsilon_t\}_{t\geq 0}$.
\State $\vl^{(0)}, \vu^{(0)} \gets \texttt{global\_bounds}(\Xset)$
\State $\hat{\vx}^{(0)} \gets \vvv_0$
\State $\mathrm{UB} \gets f(\vvv_0)$
\State $g_0 \gets \max_{\vvv\in\Xset} \innp{\nabla f(\vx_0)}{\vvv_0 - \vvv}$
\State $n_0 \gets (l^{(0)}, u^{(0)}, \mathcal{A}_0, \mathcal{S}_0, g_0, f(\hat{\vx}^{(0)}) - g_0)$
\State $\mathcal{N}_0 \gets \{n_0\}$
\State $\mathrm{UB} \gets \mathrm{min}\{\mathrm{UB}, \min_{\vvv\in \mathcal{A}_0 \cup \mathcal{S}_0 \cup \mathcal{H}_0} f(\vvv) \}$
\State $\mathrm{LB} \gets f(\hat{\vx}^{(0)}) - g_0$\
\State $t \gets 0$
\While{$\mathrm{UB} - \mathrm{LB} > \delta$ and $\mathcal{N}_t \neq \emptyset$}
\State $n_t \gets \texttt{best\_bound\_node}(\mathcal{N}_t)$
\State $(\vl^{(t)}, \vu^{(t)}, \bar{\mathcal{A}}_t, \bar{\mathcal{S}}_t, \bar{g}_t, \bar{b}_t) \gets n_t$
\State $\bar{\mathcal{N}}_t \gets \mathcal{N}_t \,\backslash\, \{n_t\} $
\State $(\hat{\vx}^{(t)}, g_t, \mathcal{A}_t, \mathcal{S}_t, \mathcal{H}_t)\gets \texttt{near-optimal\_relaxation\_solve}(\vl^{(t)}, \vu^{(t)}, \bar{\mathcal{A}}_t, \bar{\mathcal{S}}_t, \mathrm{UB}, \varepsilon_t)$
\State $\mathrm{UB} \gets \mathrm{min}\{\mathrm{UB}, \min_{\vvv\in \mathcal{A}_0 \cup \mathcal{S}_0 \cup \mathcal{H}_0} f(\vvv) \}$
\State $b_t \gets f(\hat{\vx}^{(t)}) - g_t$
\If{$b_t > \mathrm{UB}$}
\State prune suboptimal node
\ElsIf{$\hat{\vx}^{(t)} \in \Xset$}\Comment{integer point, no further branching}
\State $\mathrm{UB} \gets \mathrm{min}\{\mathrm{UB}, f(\hat{\vx}^{(t)})\}$
\State close node
\Else\Comment{fractional node to split}
\State $\hat{\vl}^{(t)}, \hat{\vu}^{(t)} \gets \texttt{dual\_bound\_tightening}({\vl}^{(t)}, {\vu}^{(t)}, \nabla f(\hat{\vx}^{(t)}), \mathrm{UB}, b_t)$
\State $j \in \{j \in J \,|\, \hat{\vx}^{(t)}_j \notin \mathbb{Z}\}$\Comment{find variable to branch on}
\State $\vl^{(t)}_r\ \gets \hat{\vl}^{(t)}\ + \mathbf{e}_{j} \left(\lceil \hat{\vx}^{(t)}_j \rceil - \hat{\vl}^{(t)}_j\right) $ \Comment{compute bounds for left and right node}
\State $\vu^{(t)}_l \gets \hat{\vu}^{(t)} + \mathbf{e}_{j} \left( \lfloor \hat{\vx}^{(t)}_j \rfloor - \hat{\vu}^{(t)}_j  \right) $
\State $\mathcal{A}_l, \mathcal{S}_l, \mathcal{A}_r, \mathcal{S}_r \gets \texttt{partition\_vertices}(\mathcal{A}_t, \mathcal{S}_t, j)$
\State $n_l \gets (\vl^{(t)}, \vu^{(t)}_l, \mathcal{A}_l, \mathcal{S}_l, {g}_t, b_t)$
\State $n_r \gets (\vl^{(t)}_r, \vu^{(t)}, \mathcal{A}_r, \mathcal{S}_r, {g}_t, b_t)$
\State $\mathcal{N}_{t+1} \gets \bar{\mathcal{N}}_t \cup \{n_l, n_r\}$
\EndIf
\State $t \gets t + 1$
\EndWhile
\end{algorithmic}
\end{algorithm}

\paragraph{Constraint representability with integer hulls.}
Optimizing over the integer hull at each node automatically makes all vertices computed across FW iterations feasible for the original problem.
The feasible region itself is tightened, greatly reducing the tree size for problems that have a weak continuous relaxation.
This approach also allows us to integrate other nonconvex constraints
as long as the integer points of their convex hull respect the combinatorial constraint, an assumption formalized below.

\begin{assumption}[non-extreme integer points feasibility]\label{A-CONV}
    Let \Xset be the feasible set, $\overline{\Xset}$ its continuous relaxation,
    $\mathrm{conv}(\Xset)$ the convex hull of \Xset, $\mathbb{Z}_J$ the set of points respecting integrality constraints, and $(\mathbf{l},\mathbf{u})$ the local bounds at the node.
    We assume that:
    \begin{align*}
        x \in \mathrm{conv}(\Xset) \cap \mathbb{Z}_J \cap \left[\mathbf{l},\mathbf{u}\right] \Rightarrow x \in \Xset.
    \end{align*}
    In other words, there exists no integer point in the convex hull of \Xset which is not feasible.
\end{assumption}
Assumption~\ref{A-CONV} ensures that if the relaxed solution obtained at a node is integer-feasible, it is feasible for \Xset since it is obtained as a convex combination of extreme points of \Xset intersected with local bounds.
On a relaxed solution, the algorithm thus branches if $x\notin\Xset$ or closes the node with an admissible solution, it cannot terminate on an integer point that would not be feasible for \Xset,
ensuring its exactness and global convergence.
We illustrate this condition with two families of constraints that respect this condition and one that does not.

\paragraph*{Indicators.}
Indicator constraints are nonconvex constraints of the form $(z, \vx) \in \{0,1\} \times \mathbb{R}^n: z = 1 \Rightarrow \innp{\mathbf{a}}{\vx} \leq b$.
They can represent, among others, logical, disjunctive, or SOS1 constraints and are crucial in MIP and MINLP applications.
Vertices of the integer hull respect the binary constraint, thus $z\in\left\{0,1\right\}$ on a vertex.
If for a given solution, vertices of the active set all have $z=0$ or all have $z=1$,
the indicator constraint is respected; otherwise, the relaxed solution $\hat{z}$ is fractional and will be branched on.
This means that our framework natively handles indicator constraints by branching on the binary variable only.
This extends to indicators with convex functions or \textit{convex superindicator} expressed as:
\begin{align*}
(z, \vx) \in \{0,1\} \times \mathbb{R}^n: z = 1 \Rightarrow g(\vx) \leq 0   
\end{align*}
with $g$ convex. Indeed, vertices are feasible by definition, so $z=0$ or $z=1, g(\vvv) \leq 0$ for any vertex $\vvv$ in the active set.
If $z=1$ in the solution, convexity implies that $g(\sum_k \lambda_k \vvv_k) \leq 0$
with $\lambda_k$ weights of the vertices in the final convex combination.

\paragraph*{Logical constraints.}

Other constraints that are directly representable are logical constraints (\texttt{or}, \texttt{and}, and \texttt{xor} constraints) or more broadly arbitrary constraints of pure binary variables.
Although these can typically be represented with linear constraints, most MIP solvers include specialized forms for these
which helps presolving, propagation, conflict analysis and separation.

\paragraph*{Special Ordered Sets.}

Special Ordered Sets of type 1 (SOS1) and 2 (SOS2) are defined for a vector of variables $\vx \in \mathbb{R}^n $as:
\begin{align*}
    & \vx \in \mathrm{SOS1} \Leftrightarrow \|\vx\|_0 \leq 1,\\
    & \vx \in \mathrm{SOS2} \Leftrightarrow \|\vx\|_0 \leq 2, x_i x_j = 0 \; \forall (i,j) \in [n]^2, |i-j| > 1.
\end{align*}
For SOS1 or SOS2 constraints over non-binary variables,
integrality of discrete variables is not sufficient to ensure feasibility, they would therefore require additional branching or reformulations.
An example with an SOS1 constraint violating the assumption is illustrated in \cref{subfig:notholdscomb}.

Our framework can also incorporate nonlinear constraints in the feasible set \Xset, as long as Assumption~\ref{A-CONV} is respected.
From the computational perspective, however, we must note that this makes sense only if the BLMO subproblems remain inexpensive compared to the original problem.
We illustrate examples where Assumption~\ref{A-CONV} may or may not hold in \cref{fig:assumption}.

\begin{figure}
	\centering
	\begin{subfigure}[t]{0.32\textwidth}
		\centering
		\includegraphics[width=0.8\textwidth]{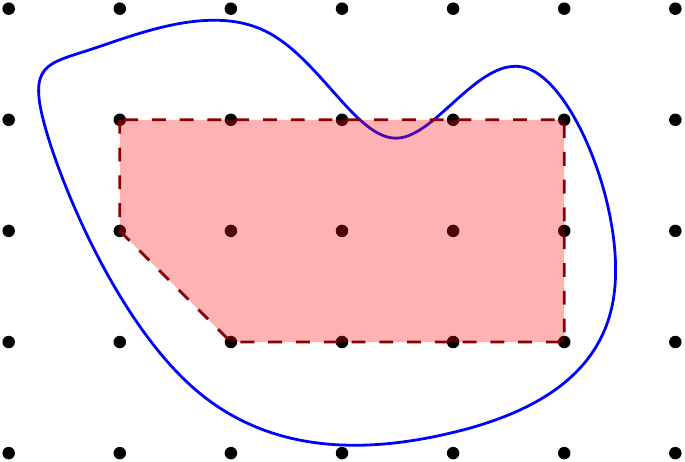}
        \caption{Nonconvex problem where Assumption~\ref{A-CONV} holds and $\mathrm{conv}(\Xset) \not\subseteq \overline{\Xset}$.}\label{subfig:holds}
	\end{subfigure}
	\hfill
	\begin{subfigure}[t]{0.32\textwidth}
		\centering
		\includegraphics[width=0.8\textwidth]{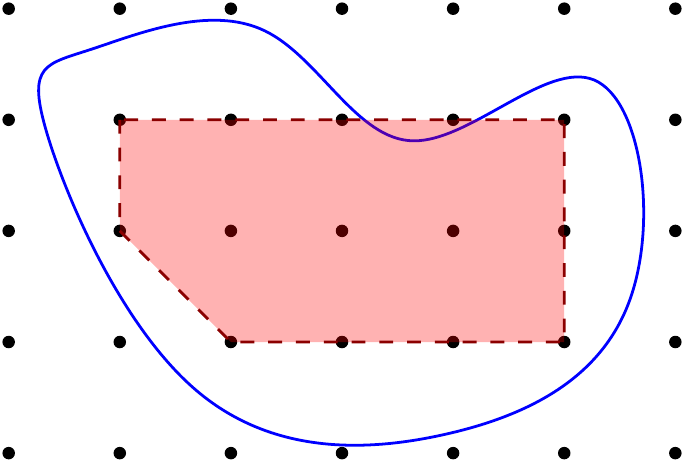}
        \caption{Example with a nonconvex problem where Assumption~\ref{A-CONV} does not hold.}\label{subfig:notholds}
	\end{subfigure}
	\begin{subfigure}[t]{0.32\textwidth}
		\centering
		\includegraphics[width=0.8\textwidth]{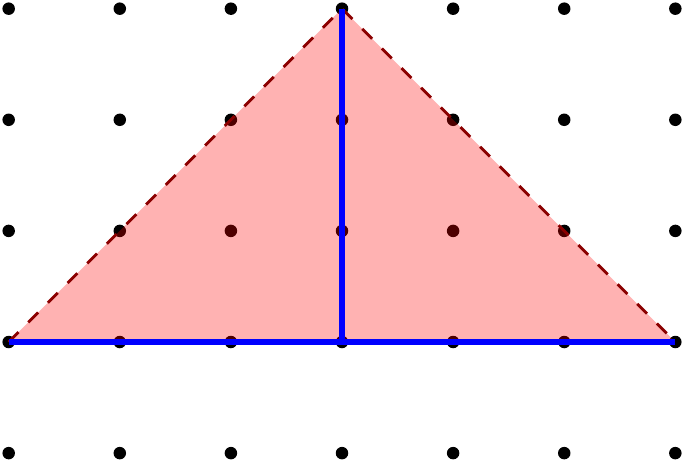}
        \caption{Example with a combinatorial constraint (equivalent to SOS1) where Assumption~\ref{A-CONV} does not hold.}\label{subfig:notholdscomb}
	\end{subfigure}
    \caption{Positive and negative illustrations for Assumption~\ref{A-CONV}. The blue solid curve represents the boundary of $\overline{X}$, the red dashed shaded area is $\mathrm{conv}(\Xset)$.}
	\label{fig:assumption}
\end{figure}

\paragraph{Blended Pairwise Conditional Gradient (BPCG).}
The classic FW algorithm calls an LMO once per iteration. Even though this is acceptable in contexts where
the LMO is inexpensive compared to gradient evaluations or in large dimensions in which storage of vertices is an issue,
it is not the best-suited one when optimizing over polytopes. The first reason is the descent direction
which can be significantly misaligned with the negative of the gradient.
The second and essential one in our context is that the LMO is particularly expensive (consisting of solving a MIP).
Instead, we employ methods that exploit the \emph{active set} decomposition of the iterate $\vx$, i.e.,~its representation as a convex combination of vertices obtained from the LMO $\vx_t := \sum_k \lambda_k \vvv_k$.
Typical algorithms leveraging this representation are the Away Frank-Wolfe, Pairwise Conditional Gradient, and Blended Conditional Gradient algorithms \citep{guelat1986some,lacoste2015global,braun2019blended}.
The variant we focus on is the recently proposed
Blended Pairwise Conditional Gradient (BPCG) algorithm \citep{tsuji2021sparser} which blends Frank-Wolfe steps in which
a vertex is added to the active set with pairwise steps which do not require any LMO call
and only perform inner products between vertices and the gradient.
Empirically, BPCG produces iterates with a smaller active set across all iterations compared to other FW algorithms.
We use more specifically a modification of the lazified BPCG from \citep{tsuji2021sparser} with the novel adaptive step size from \cite{pokutta2024frank}
implemented in \texttt{FrankWolfe.jl}; the algorithm is presented in detail in the supplement.

\paragraph{Active and shadow set branching.}
We accelerate the solving process of individual nodes through warm starts.
At the end of each BPCG call, the optimal solution is given as a convex combination of
vertices in an active set. All these vertices are obtained by calls to the BLMO and
are thus extreme points of the integer hull.
When branching, the vertices can be partitioned to form
starting active sets for the left and right children. As the relaxed solution is fractional
in the variable branched on, it always contains at least one vertex for each of the children.
The weights are also transferred to each child for the corresponding vertex and renormalized.
BPCG can also drop vertices from the active set, in a single run of the algorithm,
a dropped vertex is not part of the optimal active set and will not be re-added.
However, we solve subproblems across different nodes.
As such, a dropped vertex can be relevant for another subproblem further down the tree when the bounds have been updated, adding computational burden by redundant calls to the BLMO.
We therefore maintain a \emph{shadow set}, i.e.,~a set of discarded vertices collected within each node when they are removed from the active set.
The shadow set can be partitioned in a fashion similar to the active set.
At all non-root nodes, an extra lazification layer is added to BPCG: if no suitable pair of vertices can be found in the pairwise step,
the algorithm searches the shadow set for a vertex with a suitable inner product with the gradient, using identical criteria as in the lazy BPCG algorithm.
Only if this additional step fails does the algorithm perform a BLMO call computing a new vertex.
By propagating both the active and shadow set, we significantly reduce the number of calls to the BLMO, ensuring we never recompute a given vertex twice within a run for our algorithm.

\section{Branch-and-bound techniques for error-adaptive convex relaxations}\label{sec:TechniquesErrorAdaptiveness}

In this section, we present the techniques derived from branch-and-bound techniques that can be used in our framework.
In particular, we highlight the advantages of the error-adaptivity of BPCG, i.e.,~the property of the computational cost decreasing
gradually when the error tolerance increases, and how the various components of modern MIP solvers can be used to enhance the higher-level branch-and-bound process.
MIP solvers are implementations of complex algorithms with several key components including presolving, heuristics, bound tightening, and conflict analysis.
This is usually also a main motivation for single-tree MINLP solvers handling nonlinearities as a special case of separation within a MIP solving process.
We can however also leverage this additional information about the feasible set within our framework by transferring it across nodes.

\paragraph*{FW gap-based termination.}
Frank-Wolfe methods produce at each iteration a primal feasible iterate and a so-called FW gap upper-bounding
the optimality gap. They feature many inexpensive iterations
(unlike e.g.,~interior points which require few expensive iterations),
with a dual bound gradually increasing with iterations.
We leverage this fact to adapt the solving process at each node.
Whenever the dual bound reaches the objective value of the best incumbent (or becomes close to
the gap tolerance), the node can be terminated early.
A significant part of the solving process can be avoided in this manner which is not necessarily
the case when nodes are processed with other nonlinear solvers.
Pushing this further, the solution process of a node can be stopped at any point to produce a dual bound,
even if the solution is not an exact but approximate optimum.
The only strict requirement is that the dual bound after solving the current node becomes high enough to
yield progress in the overall tree dual bound.

\paragraph*{Tree state-dependent termination and evolving error.}
We implement additional termination criteria for node processing which do not guarantee the node should be pruned
but that save on the total number of iterations.
One of them is the number of open nodes with a lower bound (obtained from the parent)
that is lower than the dual bound of the node being processed during the BPCG run.
This set of more promising nodes will be processed before the children of the current node are.
If the number of these more promising nodes is high enough,
there is a higher probability for the children of the current node not to be processed at all, for instance,
if an incumbent is found elsewhere that prunes them.
We also add an adaptive Frank-Wolfe gap criterion, increasing the precision with the depth in the BnB tree:
$\varepsilon_n = \varepsilon_0\, \rho^n$,
with $\rho \in \left]0, 1\right]$ where $n$ is the depth of the current node, starting from the root at depth 0,
and clipping when reaching a sufficient depth.
With some convex solvers, using a reduced precision could slow down the search, as approximate solutions might exhibit a much higher fractionality when a weaker stopping criterion is used, requiring more branching. In the case of BPCG however, the iterates are a convex combination of a few integer extreme points.
Furthermore, if the optimal solution at a node is an extreme point itself, the process converges rapidly by dropping
the other extreme points from the active set once the optimal one has been added.
As such, solutions obtained from low-precision runs do not necessarily exhibit a higher fractionality.

\paragraph{Strong branching.}
When tackling large discrete problems, the choice of variables to branch on can yield drastic differences
in how fast the lower and upper bounds evolve and the overall size of the branch-and-bound tree. A powerful technique to estimate the lower bound increase when branching on a given variable is \emph{strong branching}
which solves the children subproblems of a given node for all candidate variables, selecting the variable to branch on
that improves the lowest lower bound across children. Other techniques like pseudo-cost branching or recent
machine learning approaches
try to construct surrogate models to avoid solving the multiple convex subproblems induced by strong branching \citep{nair2020solving}.

We propose a new family of branching techniques that leverage the properties of FW algorithms to obtain a partial estimate
of the lower bound improvement while greatly reducing the cost.
In our context, strong branching with nonlinear subproblems over the integer hull would be too costly to perform variable selection. However, we can
\begin{inparaenum}[(a)]
\item relax the strong branching over the integer hull to the continuous relaxation (solving LPs for the LMO instead of MIPs),
\item run few iterations of the subproblem (and/or set a very high FW gap tolerance).
\end{inparaenum}
In the limit case, on the one extreme end this corresponds to performing a single FW iteration for the strong branching estimation and the optimal value of a single linear problem is used to select the branching variable. On the other extreme end, the complete high-accuracy solve would correspond to using the continuous relaxation of the node. By carefully limiting the precision we can interpolate between these two regimes.

\revision{This new partial strong branching technique will not be beneficial for all problems, i.e.,~it can still be too computationally expensive. 
Thus, we also developed a hybrid strong branching technique 
which performs partial strong branching only to a certain depth in the Branch-and-Bound tree and then switches to the default strategy, i.e.,~most fractional branching. 
The idea is that strong branching is effective mainly for the first couple of branching decisions.}

\paragraph*{Dual fixing and tightening.}
In a lot of subproblems, many variables are at their (local) bounds. We can then leverage convexity and primal solutions to deduce safe \emph{dual fixings}
or at least \emph{dual tightening} of the bounds,
a procedure originally established for MIPs going back to Dantzig (see \citet{grotschel2008george} and \citet{achterberg2020presolve}) and extended to mixed-integer conic problems in \citet{gally2018framework} or MINLPs in \citet{mahajan2021minotaur}.
Dual fixing in these frameworks is a presolving technique, applied in a static manner to a problem to reduce its size before the solving phase.
Our approach is slightly different though as we do not explicitly use dual solutions as customary but rather convexity of the objective and availability of the FW gap.
These connect to a property of Frank-Wolfe methods developed in \citet{braun2021dual}, namely that for every Frank-Wolfe vertex $\vvv$ in a polytope, the dual gap is equal to the inner product of the dual variables with the slack of the polytope, in the case dual solutions exists.
Our approach is more general and even works when dual solutions are not readily available e.g., when the BLMO is given via a MIP.
We confine ourselves here to tightening the upper bound (potentially fixing it to the lower bound); the case for tightening the lower bound is analogous.

\begin{theorem}\label{the:dualtightening}
Let us assume that the bounds are $\left[\vl,\vu\right] \supseteq \Xset$ and that we have a relaxed solution $\vx^{(t)}$ and a variable $j \in J$
such that $\vx^{(t)}_j = \vl_j$ and $\nabla f(\vx^{(t)})_j \geq 0$.
Then, if there exists $M \in \{1, \dots, \vu_j-\vl_j\}$, such that:
\begin{align}\label{eq:dualtightening}
    M \nabla f(\vx^{(t)})_j > \mathrm{UB} - f(\vx^{(t)}) + g(\vx^{(t)})
\end{align}
with $g(\cdot)$ the Frank-Wolfe gap:
\begin{align*}
g(\vx) := \max_{\vx \in \Xset} \innp{\nabla f(\vx^{(t)})}{\vx^{(t)} - \vx},
\end{align*}
then the upper bound can be tightened to $\vx^{(t)}_j \leq \vl_j + M - 1$.
\end{theorem}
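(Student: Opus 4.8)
The plan is to establish validity of the proposed tightening by arguing at the level of feasible points: I will show that every $\vx \in \Xset$ respecting the node bounds $[\vl,\vu]$ with $\vx_j \geq \vl_j + M$ satisfies $f(\vx) \geq \mathrm{UB}$, and hence can never strictly improve on the incumbent. Once this is proved, the inequality $\vx_j \leq \vl_j + M - 1$ can be imposed without discarding any improving solution, which is exactly the asserted bound tightening (the symmetric lower-bound case being analogous). Throughout I abbreviate $\vd := \nabla f(\vx^{(t)})$ and $g := g(\vx^{(t)}) = \max_{\vvv \in \Xset} \innp{\vd}{\vx^{(t)} - \vvv}$, and I note that hypothesis~\eqref{eq:dualtightening} rearranges to $\big(f(\vx^{(t)}) - g\big) + M\vd_j > \mathrm{UB}$, i.e.,~the node lower bound $f(\vx^{(t)}) - g$ augmented by the quantity $M\vd_j$ already exceeds the incumbent.

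First I would invoke convexity of $f$ via the gradient inequality $f(\vx) \geq f(\vx^{(t)}) + \innp{\vd}{\vx - \vx^{(t)}}$, valid for every feasible $\vx$, and split the inner product along the branching coordinate,
\begin{align*}
\innp{\vd}{\vx - \vx^{(t)}} = \vd_j\,(\vx_j - \vx^{(t)}_j) + \sum_{k \neq j} \vd_k\,(\vx_k - \vx^{(t)}_k).
\end{align*}
The first term is controlled directly by the hypotheses: since $\vx^{(t)}_j = \vl_j$, the assumption $\vx_j \geq \vl_j + M$, combined with $\vd_j \geq 0$, yields $\vd_j\,(\vx_j - \vx^{(t)}_j) \geq M\vd_j$. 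The remaining task is to show that the second (non-branching) sum is bounded below by $-g$.

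The step I expect to be the main obstacle is precisely this bound $\sum_{k \neq j} \vd_k\,(\vx_k - \vx^{(t)}_k) \geq -g$. It does \emph{not} follow from applying the Frank-Wolfe gap inequality to $\vx$ itself, because that inequality controls the full inner product $\innp{\vd}{\vx^{(t)} - \vx}$, whose $j$-th term is nonnegative and can be arbitrarily large, rather than the restricted sum. To isolate the non-branching coordinates I would apply the gap inequality to the auxiliary point $\tilde{\vx}$ that coincides with $\vx$ on all coordinates $k \neq j$ and takes value $\tilde{\vx}_j = \vl_j = \vx^{(t)}_j$. Then $\innp{\vd}{\vx^{(t)} - \tilde{\vx}} = \sum_{k \neq j} \vd_k\,(\vx^{(t)}_k - \vx_k) \leq g$, which is exactly the desired inequality after negation. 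The delicate point to discharge is the admissibility of $\tilde{\vx}$ for the gap maximization, i.e.,~that resetting coordinate $j$ to its lower bound keeps the point in the set over which $g$ is taken; this is immediate when the integer coordinate enters only through its box bound $[\vl_j,\vu_j]$ (so the fibre $\vx_j = \vl_j$ is feasible), and it is the separability in coordinate $j$ that the argument genuinely relies on. I would flag that without such structure the naive global gap cannot separate the guaranteed gain $M\vd_j$ from the relaxation slack, so some partial/coordinate-restricted form of the gap is what makes the estimate tight.

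Finally I would combine the three pieces: for any feasible $\vx$ with $\vx_j \geq \vl_j + M$,
\begin{align*}
f(\vx) \;\geq\; f(\vx^{(t)}) + M\vd_j - g \;>\; \mathrm{UB},
\end{align*}
where the last strict inequality is the reformulation of~\eqref{eq:dualtightening}. Thus no feasible point with $\vx_j \geq \vl_j + M$ can beat the incumbent, so the constraint $\vx_j \leq \vl_j + M - 1$ is valid; the special case $M = 1$ recovers a full dual fixing of $\vx_j$ to $\vl_j$, while larger admissible $M$ give progressively weaker tightenings.
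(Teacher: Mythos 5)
Your proposal follows the same overall route as the paper's proof: the convexity (gradient) inequality, the split of $\innp{\nabla f(\vx^{(t)})}{\vx - \vx^{(t)}}$ into the branching coordinate and the remaining ones, the lower bound $M \nabla f(\vx^{(t)})_j$ on the first part, and the Frank--Wolfe gap to control the second. The two arguments diverge only at the step you single out as the crux, namely showing $\sum_{k\neq j}\nabla f(\vx^{(t)})_k(\vx-\vx^{(t)})_k \geq -g(\vx^{(t)})$. The paper handles it by asserting that, since $\min_{\vx\in\Xset}\nabla f(\vx^{(t)})_j(\vx-\vx^{(t)})_j = 0$, one may ``extend the sum to all variables'', i.e., $\max_{\vx\in\Xset}\sum_{k\neq j}\nabla f(\vx^{(t)})_k(\vx^{(t)}-\vx)_k \leq g(\vx^{(t)})$. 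As you observe, this does not follow pointwise: the omitted $j$-th term $\nabla f(\vx^{(t)})_j(\vl_j-\vx_j)$ is nonpositive, so for each fixed $\vx$ the full inner product is at most the restricted sum, and the comparison of the two maxima a priori goes the wrong way. Your auxiliary point $\tilde{\vx}$ (equal to $\vx$ off coordinate $j$, with $\tilde{\vx}_j=\vl_j$) is exactly what is needed to repair the step, and it makes explicit the hypothesis both proofs in fact rely on: that resetting coordinate $j$ to its lower bound keeps the point in the set over which $g$ is maximized. This is not automatic for a coupled feasible set: take $\Xset=\{(0,0),(1,-1)\}$, $f(\vx)=\vx_1+1.5\,\vx_2$, $\vl=(0,-1)$, $\vu=(1,0)$, $j=1$, $\vx^{(t)}=(0,0)$, $\mathrm{UB}=f(0,0)=0$; then $g(\vx^{(t)})=0.5$ and \eqref{eq:dualtightening} holds with $M=1$, yet the point $(1,-1)$ has $\vx_1=1$ and value $-0.5<\mathrm{UB}$, so the tightening would cut off the optimum. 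So your proof is the careful version of the paper's argument; what it buys is the identification of the coordinate-separability (fiber-feasibility) condition under which the theorem actually holds, which the paper's proof uses implicitly and which should be stated as an assumption. Your closing step (every point of $\Xset$ with $\vx_j\geq\vl_j+M$ has value exceeding $\mathrm{UB}$, hence the cut discards no improving solution) is just the direct form of the paper's contrapositive of \eqref{eq:validcomplement}.
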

\begin{proof}
By convexity, we have:
\begin{align*}
& \innp{\nabla f(\vx^{(t)})}{\vx - \vx^{(t)}} \leq  f(\vx) - f(\vx^{(t)}) \,\,\forall \vx \in \Xset.
\end{align*}
If $\vx_j > \vl_j$, we can rewrite $\vx_j = \vl_j + M,$ with $M \in \{1, \dots, \vu_j-\vl_j\}$. Then:
\begin{align*}
    & \nabla f(\vx^{(t)})_j (\vx - \vx^{(t)})_j + \sum_{k\neq j} \nabla f(\vx^{(t)})_k (\vx - \vx^{(t)})_k \\
    =& \, M \nabla f(\vx^{(t)})_j + \sum_{k\neq j} \nabla f(\vx^{(t)})_k (\vx - \vx^{(t)})_k \leq  f(\vx) - f(\vx^{(t)}) \quad \forall \vx \in \Xset\\
\end{align*}
Let us denote $\Xset_M := \{\vx \in \Xset \,|\, \vx_j \geq \vl_j + M\}$ for $M \in \{1, \dots, \vu_j-\vl_j\}$.
Taking the minimum over all solutions $\vx \in \Xset_M$, we obtain:
\begin{align*}
    \min_{\vx \in \Xset_M} f(\vx) - f(\vx^{(t)}) \geq\, & \min_{\vx \in \Xset_M} \vx_j \nabla f(\vx^{(t)})_j + \min_{\vx \in \Xset_M} \sum_{k\neq j} \nabla f(\vx^{(t)})_k (\vx - \vx^{(t)})_k \\
    =\, &  M \nabla f(\vx^{(t)})_j - \max_{\vx \in \Xset_M} \sum_{k\neq j} \nabla f(\vx^{(t)})_k (\vx^{(t)} - \vx)_k \\
    \geq\, & M \nabla f(\vx^{(t)})_j - \max_{\vx \in \Xset} \sum_{k\neq j} \nabla f(\vx^{(t)})_k (\vx^{(t)} - \vx)_k.
\end{align*}
The last inequality holds since $\Xset_M \subseteq \Xset$.
Since:
\begin{align*}
\min_{\vx\in\Xset} \nabla f(\vx^{(t)})_j (\vx - \vx^{(t)})_j = 0,
\end{align*}
we can extend the sum to all variables:
\begin{align*}
    \min_{\vx \in \Xset_M} f(\vx) - f(\vx^{(t)}) \geq\, & M \nabla f(\vx^{(t)})_j - \max_{\vx \in \Xset} \innp{\nabla f(\vx^{(t)})}{\vx^{(t)} - \vx} \\
    =\, & M \nabla f(\vx^{(t)})_j - g(\vx^{(t)}),
\end{align*}
with $g(\cdot)$ is the Frank-Wolfe gap:
\begin{align*}
g(\vx) := \max_{\vx \in \Xset} \innp{\nabla f(\vx^{(t)})}{\vx^{(t)} - \vx}.
\end{align*}
If the optimal solution is in $\Xset_M$, then:
\begin{align}\label{eq:validcomplement}
    & \mathrm{UB} - f(\vx^{(t)}) \geq \min_{\vx \in \Xset} f(\vx) - f(\vx^{(t)}) = \min_{\vx \in \Xset_M} f(\vx) - f(\vx^{(t)}) \geq  M \nabla f(\vx^{(t)})_j - g(\vx^{(t)}),
\end{align}
with $\mathrm{UB}$ the best primal value. Therefore, if \eqref{eq:validcomplement} does not hold, i.e.:
\begin{align*}
    & M \nabla f(\vx^{(t)})_j > \mathrm{UB} - f(\vx^{(t)}) + g(\vx^{(t)}),
\end{align*}
we can deduce $\vx_j \leq \vl_j + M - 1$.
\end{proof}
We exploit \cref{the:dualtightening} in two ways within our algorithm.
First, we strengthen the local bounds at nodes, using the final relaxed solution.
Because we incrementally add bounds on integer variables as the algorithm runs further down in the tree, more variables terminate at their bounds, leading to further fixings.
We note that tightening reduces the upper bound when a variable is at its lower bound, which implies that all vertices of the active set are at the same bound,
and that the active set vertices can be partitioned as described in \cref{sec:integerhull},
unlike shadow set vertices which may be discarded by the tightening.
Tightening not only reduces the number of nodes to solve the problem but may also accelerate BLMO calls through the reduction of the integer variable domains.
We also provide a stronger version when a strong convexity constant $\mu$ is known for the objective in \cref{the:dualstronger} of \cref*{app:bounds}.

Second, we exploit the root-node relaxation $\bar{\vx}$, for which we store the gradient $\nabla f(\bar{\vx})$ and the dual bound $f(\bar{\vx}) - g(\bar{\vx})$.
Whenever the upper bound decreases with a new primal solution, we can verify whether the inequality \eqref{eq:dualtightening} holds
and create a set of tighter bounds that are used to preemptively prune out nodes.
\revision{This method may be extended in future work to incorporate other tightening techniques, such as the Lagrangian bound from optimization-based bound tightening from \citet{gleixner2017three}.}

\paragraph*{Strong convexity node bound tightening.}
For several important classes of problems, a strong convexity parameter $\mu$ is known for the objective.
In such cases, we can exploit the minimum increase of the objective from a current fractional solution $\hat{\vx}$
to preemptively prune nodes or increase their bound.
We will illustrate this by branching down on (i.e.,~flooring) the fractional variable $j$.
The same reasoning applies when branching up (i.e.,~ceiling).
Let us denote $\vx^*_l$ the optimal solution of the subproblem created after branching on the variable, and $\hat{J} \subseteq J \backslash \{j\}$ the set of other integer variables at fractional values.
Our goal is to derive a lower bound for $f(\vx^*_l)$ which potentially allows us to prune the node.
\begin{align*}
    f(\vx^*_l) & \geq f(\hat{\vx}) + \frac{\mu}{2} \norm{\vx^*_l - \hat{\vx}}^2_2 + \innp{\nabla f(\hat{\vx})}{\vx^*_l - \hat{\vx}}\\
    & \geq f(\hat{\vx}) + \frac{\mu}{2} (\hat{\vx}_j - \lfloor\hat{\vx}_j\rfloor )^2 + \frac{\mu}{2} \sum_{k\in\hat{J}} \min \left\{ (\hat{\vx}_k - \lfloor\hat{\vx}_k\rfloor )^2, (\lceil\hat{\vx}_k\rceil - \hat{\vx}_k )^2  \right\}   - \innp{\nabla f(\hat{\vx})}{\hat{\vx} - \vx^*_l} \\
    & \geq f(\hat{\vx}) + \frac{\mu}{2} (\hat{\vx}_j - \lfloor\hat{\vx}_j\rfloor )^2 + \frac{\mu}{2} \sum_{k\in\hat{J}} \min \left\{ (\hat{\vx}_k - \lfloor\hat{\vx}_k\rfloor )^2, (\lceil\hat{\vx}_k\rceil - \hat{\vx}_k )^2  \right\} - \max_{\vvv\in\Xset} \innp{\nabla f(\hat{\vx})}{\hat{\vx} - \vvv} \\
    & = f(\hat{\vx}) + \frac{\mu}{2} (\hat{\vx}_j - \lfloor\hat{\vx}_j\rfloor )^2 + \frac{\mu}{2} \sum_{k\in\hat{J}} \min \left\{ (\hat{\vx}_k - \lfloor\hat{\vx}_k\rfloor )^2, (\lceil\hat{\vx}_k\rceil - \hat{\vx}_k )^2  \right\} - g(\hat{\vx}).
\end{align*}
We also note that the strong convexity parameter can be computed only on the integer variables, which can result in a stronger value of $\mu$ than on the whole function.
Furthermore, strongly convex terms can be added to binary variables when they do not change the value of $f$ on $\{0,1\}$,
for instance by replacing a linear expression $\sum_{i} \vc_i \vx_i$ with $\vc \geq 0$ by a quadratic one $\sum_{i} \vc_i \vx_i^2$.

The obtained improved bounds have several use cases;
they can be used directly to find a branching decision removing one of the two branches or maximizing a lower bound on the improvement.
Given the form of the inequality, the branching decision maximizing the lower bound improvement is equivalent to most fractional branching.
We can also increase the lower bound of the node as the maximum of the bound improvement over all variables (i.e., the bound obtained from the most fractional branching).

The technique is complementary to dual bound tightening: dual bound tightening can be leveraged when a variable is at one of its bounds, tightening the variable domain,
while strong convexity improves the dual bound based on fractional variables.
A more generic version of the technique only assuming \emph{sharpness} (Hölderian error bound) of the function is developed in \cref*{app:bounds}.

\section{Computational experiments}\label{sec:CompuationalExperiments}

We use the \texttt{FrankWolfe.jl} framework \citep{besanccon2022frankwolfe,fwreleasepaper} to solve the node subproblems.
All features specific to our BnB framework are implemented in the open-source Julia package \package{}.
The BnB core structures are implemented in \texttt{Bonobo.jl} and
\package{} supports all MIP solvers through \texttt{MathOptInterface} \citep{legat2020mathoptinterface}.
The underlying MIP solver used in the experiments is \texttt{SCIP} \revision{0.11.14} \citep{bestuzheva2021scip} run single-threaded,
strong branching uses \texttt{HiGHS} \revision{v1.9.0} \citep{Huangfu_Parallelizing_the_dual_2018} as the LP solver.
\revision{All experiments were carried out in a 64-core compute node equipped with an Intel Xeon Gold 6338 2.00GHz and 1024GB RAM. The time limit was half an hour, i.e.,~1800 s.}
We use Julia \revision{1.10.2}. The package versions used are \texttt{FrankWolfe.jl} \revision{v.0.3.4}, \texttt{Bonobo.jl} \revision{v0.1.3}, \texttt{SCIP.jl} \revision{v0.11.14}.

\subsection*{Problem classes}
These problems and additional examples are available in the package repository.\newline

\emph{Sparse Regression Problems.} Sparsity is a desirable property for
prediction models for reasons of robustness, explainability, computational efficiency or other underlying motivations.
Our framework allows for solving all cardinality-constrained regression models including
linear regression, sparse Poisson regression, and logistic regression, as long as the loss function is convex and its gradient available.
\revision{In \package{}, there are examples of regression models where the predictor coefficients themselves are constrained to take integer values.}
The flexible computational model further allows for richer constraint structures encoding more domain knowledge than sparsity alone.
In particular, we can encode group sparsity and constraints similar to group lasso \citep{friedman2010note}.

\emph{Sparse Tailed Regression.} Another example of constraint structure we can directly encode is thresholded or two-tailed cardinality-constrained optimization
as considered in \citet{ahntractable}: given a compact convex set \Xset, a convex loss $f$ and a cardinality penalty $\lambda$,
the tailed cardinality-penalized model they propose is:
\begin{align*}
    \min_{\vx \in \Xset} f(\vx) + \lambda \|\max \{|\vx_i| - \tau_i, 0\}_{i\in \left[n\right]}\|_0 + \mu \|\vx\|_2^2
\end{align*}
which can be reformulated using indicator constraints as:
\begin{align}\label{prob:threscard}\tag{TCMP}
\min_{\vx, \mathbf{z}, \mathbf{s}}\,\,\, & f(\vx) - \lambda \sum_i \mathbf{z}_i + \mu \|\vx\|_2^2\\
\text{s.t.} \,\, & \mathbf{z}_i = 1 \Rightarrow \mathbf{s}_i \leq 0 \nonumber\\
& \mathbf{s}_i \geq \vx_i - \tau_i \nonumber\\
& \mathbf{s}_i \geq -\vx_i - \tau_i \nonumber\\
& \vx \in \Xset, \mathbf{z} \in \{0,1\}^n, \mathbf{s} \in \Xset \cap \mathbb{R}^n_+.\nonumber
\end{align}

When the indicator variable $\mathbf{z}_i$ equals one and the slack $\mathbf{s}_i$ are zero, $\vx_i$ is constrained in the interval $\left[-\tau_i, \tau_i\right]$,
the model hence penalizes the number of predictor values above a threshold in absolute value, while ignoring all smaller values.
We can provide guaranteed optimal, near-optimal solutions, relaxations and bounds for Problem~\eqref{prob:threscard}
or a cardinality-constrained version without additional assumptions, replacing the complementarity constraints of
\citet{ahntractable} with polyhedral and combinatorial constraints.


\emph{Portfolio Optimization.} We revisit the example of \citet{buchheim2018frank},
selecting a portfolio with budget $b$, integrality requirements on shares for some assets, and with a
generic convex differentiable risk penalty term $h(\cdot)$:
\begin{align*}
\min_{\vx}\,\, h(\vx) = \, \vx^T M \vx - \innp{r}{\vx}\,\,\,
\text{s.t. }\, \innp{c}{\vx} \leq b,\, \vx_j \in \mathbb{Z}\, \forall j \in J.
\end{align*}
\revision{We investigate both the case where all variables are integers, the Pure Integer Portfolio Problem, and where only half of the variables are integers, the Mixed Integer Portfolio Problem.}

\emph{MIPLIB Instances.} Our framework can process standard instance formats out of the box
through the \texttt{MathOptInterface} library \citep{legat2020mathoptinterface}.
We take instances from the MIPLIB 2017~\citep{miplib} with a bounded feasible region and set a convex quadratic objective 
that minimizes the sum of distances to several random vertices. This ensures a sufficient curvature and a continuous optimum in the interior,
avoiding a single MIP call to be optimal.
The instances used are \texttt{pg5\_34}, \texttt{ran14x18-disj-8}, \texttt{neos5} and \texttt{22433}.
\revision{The purpose of the MIPLIB instances is to have problems with computationally expensive BLMOs.}

\subsection*{Results}

To evaluate \package, we compare its performance on the aforementioned problems to four other solution methods.

\paragraph*{\texttt{BnB-Ipopt}.} First, we have implemented a nonlinear BnB approach using the \texttt{Ipopt} interior point solver for the nonlinear node relaxations.
\revision{For NLPs of moderate size, \texttt{Ipopt} is one of the most robust and well-tested open-source interior point solvers.}
\revision{The purpose of comparisons against \texttt{BnB-Ipopt} is primarily to show the impact of FW-based relaxations for different types of instances compared to an interior point method.
We use \texttt{Bonobo.jl} as the base BNB framework on top of Ipopt, which is the same one as the one used for \package{}.}

\paragraph*{\texttt{SCIP+OA}.} The next solution method is an Outer Approximation approach utilizing the open source solver \texttt{SCIP} \revision{with a custom constraint handler to generate the cutting planes from the gradient of the objective.}
\revision{Note that \texttt{SCIP} could be used as-is but would require the objective function in an expression format. 
The purpose of this setup is to keep the objective function as a first-order oracle.}

\paragraph*{} The implementation of the two previous setups can be found on the \package{} repository in the \\ \verb|rerun_experiments| branch (\url{https://github.com/ZIB-IOL/Boscia.jl/tree/rerun_experiments}).

\revision{\paragraph*{\texttt{Pavito}.} Apart from our customized approaches, we also want to compare \package{} to established solvers. 
One of those is the mixed-integer convex solver \texttt{Pavito} \citep{pavito} written in \texttt{Julia}.
As \texttt{SCIP+OA}, it is an Outer Approximation approach using \texttt{SCIP} as a MIP solver and \texttt{Ipopt} as the NLP solver.}

The summary of the performance comparison can be found in \cref{tab:SummaryOfComparison}. 
For a more detailed comparison for each problems class, see \cref{tab:SummaryByDifficultyMIPLIP22433,tab:SummaryByDifficultyMIPLIPneos5,tab:SummaryByDifficultyMIPLIPpg534,tab:SummaryByDifficultyMIPLIPran14x18,tab:SummaryByDifficultyPoisson,tab:SummaryByDifficultyPortfolioInteger,tab:SummaryByDifficultyPortfolioMixed,tab:SummaryByDifficultySparseReg,tab:SummaryByDifficultySparseLogReg,tab:SummaryByDifficultyTailedSparseReg,tab:SummaryByDifficultyTailedSparseLogReg} in \cref{app:DetailedExperiments}. 
Due to the indicator constraints in the Tailed Sparse Regression Problem, this can only be solved by the \texttt{SCIP+OA} framework and \package{}.

The performance of \package{} is overall good across a large range of instances.
For most problems, it is the best or second best in terms of instances solved and average solving time. 
The exceptions are the MIPLIB \texttt{pg5\_34} and \texttt{ran14x18-disj-8} instances of which none could be solved by \package.
For these problems, the corresponding MIP is expensive to solve and thus, \package{} is, as expected, not a good approach. 
Note though that all solver setups struggle with the two problems which point to them being hard to solve in general. 

On many instances of the Poisson Regression Problem and Sparse Log Regression Problem \package{} times out as well. 
Looking at progress plots for each problem in \cref{fig:ProgressPlots}, we see that the incumbent does not change drastically after the first 200 nodes.
Thus, the closing of the optimality gap has to be achieved by improving the lower bound. 
Observe that the lower bound is improving very slowly.
This points to the continuous relaxation in the nodes being too loose.
On the other hand, the progress of the lower bound could be improved by potentially exploiting either sharpness or strong convexity.

\begin{figure}
    \centering
    \begin{subfigure}[t]{0.49\textwidth}
        \centering
        \includegraphics[width=0.9\textwidth]{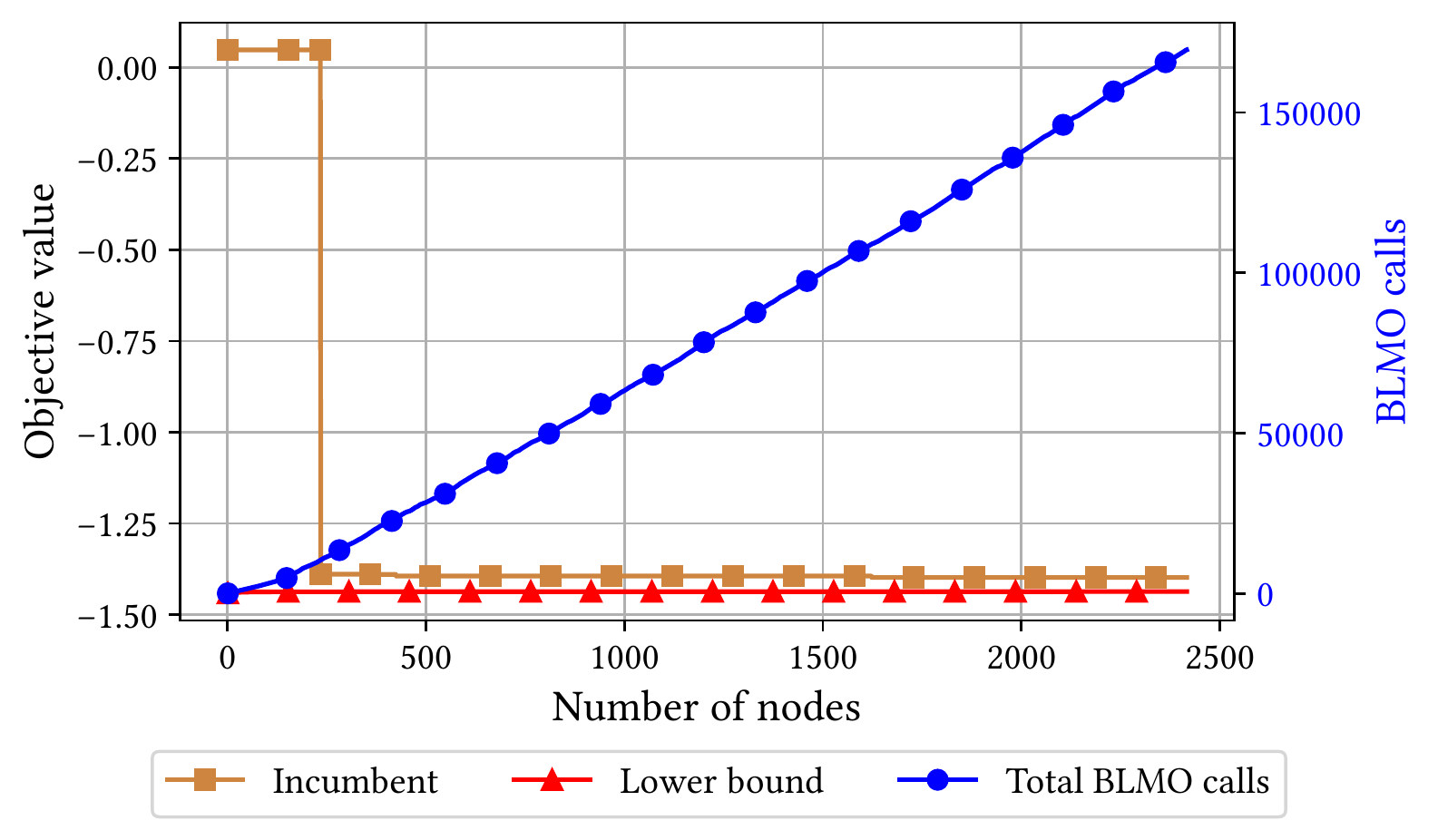}
        \caption{\revision{Poisson Regression of dimension 70.}}
        \label{fig:poissonRegProgress}
    \end{subfigure}
    \hfill
    \begin{subfigure}[t]{0.49\textwidth}
        \centering
        \includegraphics[width=0.9\textwidth]{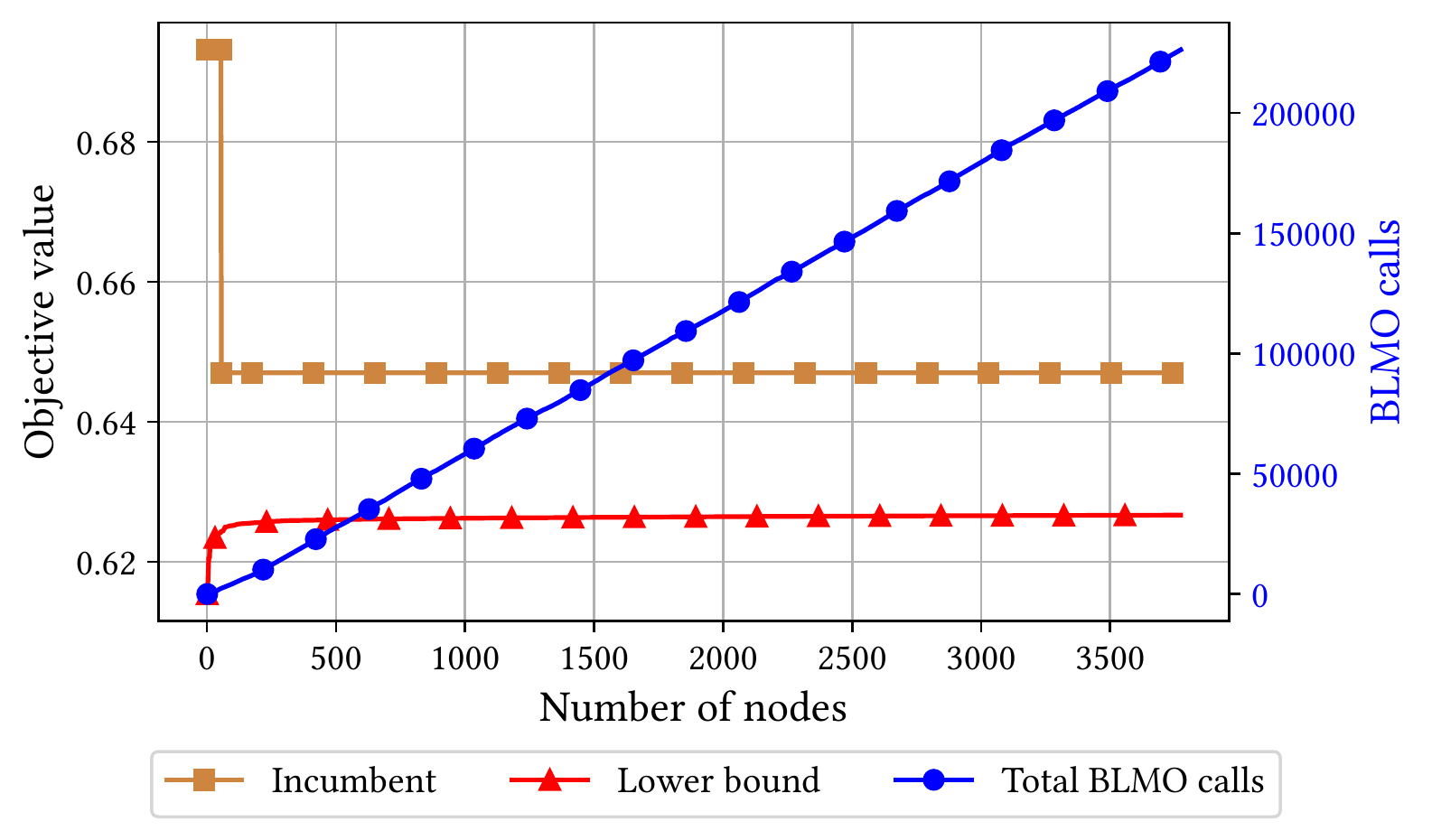}
        \caption{\revision{Sparse Log Regression of dimension 75.}}
        \label{fig:sparseLogRegProgress}
    \end{subfigure}
    \caption{\revision{Progress of the lower bound and the incumbent of \package{} over the number of nodes as well as the accumulated number of BLMO calls.}}
    \label{fig:ProgressPlots}
\end{figure}

The effect strong convexity can have on the solution process is showcased in \cref{fig:miplibStrongConvexity} on two MIPLIB \texttt{neos5} instances.
The lower bound is tightened and the solving time is thus reduced significantly, in the case of \cref{fig:miplibNeos55StC} even halved. 
The number of processed nodes decreases by roughly a quarter.

\begin{figure}
    \centering
    \begin{subfigure}[t]{0.49\textwidth}
        \centering
        \includegraphics[width=1.0\textwidth]{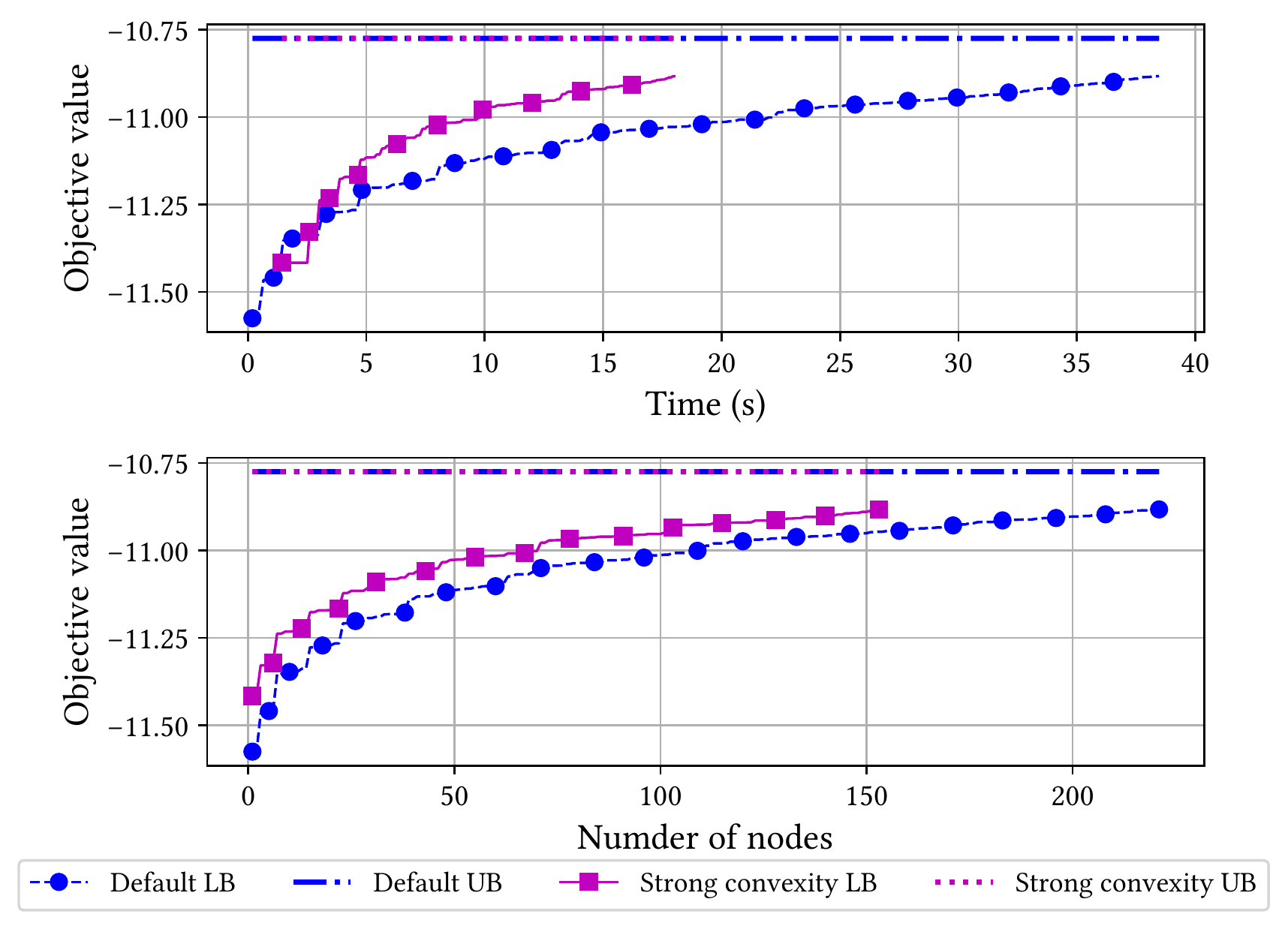}
        \caption{\revision{MIPLIB \texttt{neos5} example with the objective being formed from 5 vertices.}}
        \label{fig:miplibNeos55StC}
    \end{subfigure}
    \hfill
    \begin{subfigure}[t]{0.49\textwidth}
        \centering
        \includegraphics[width=1.0\textwidth]{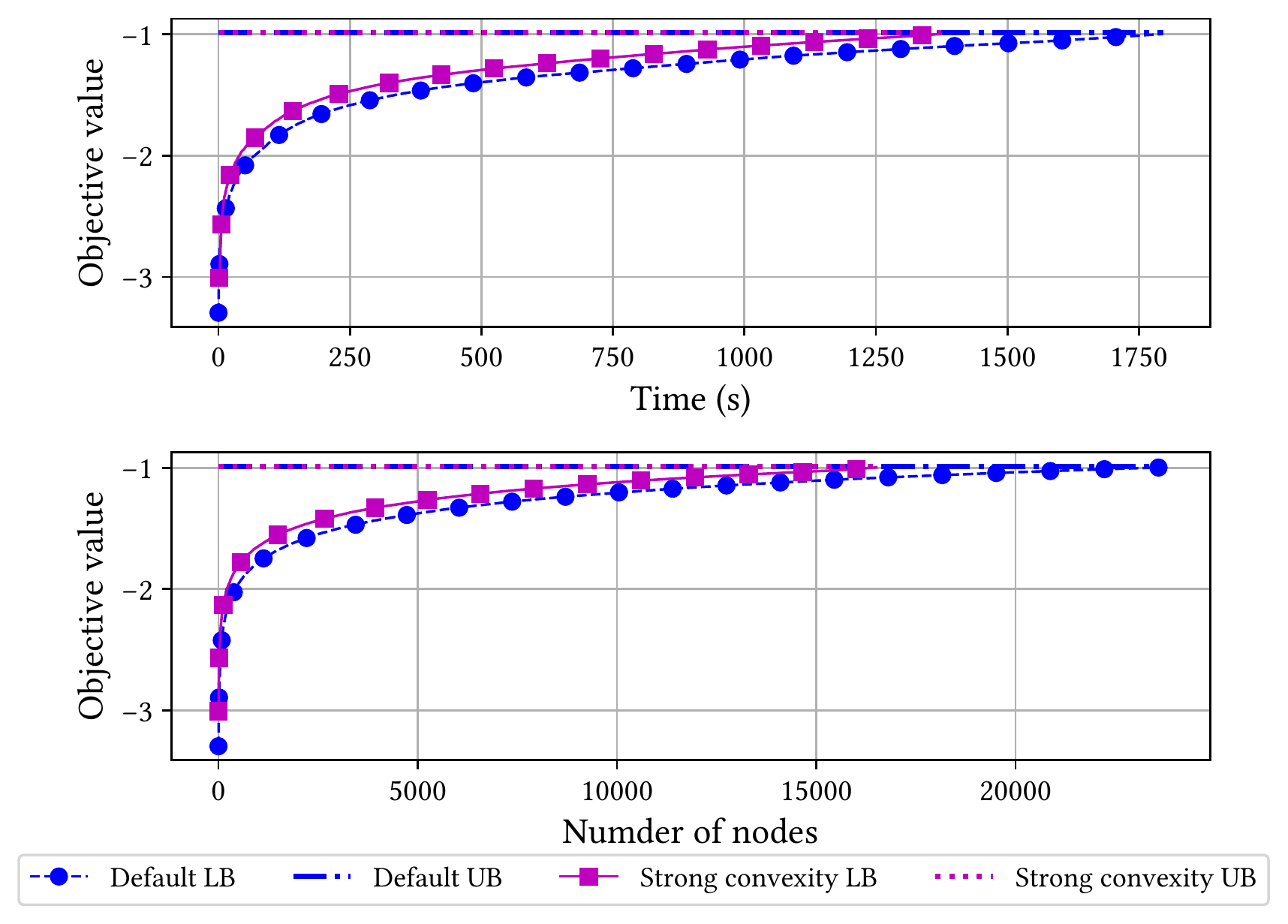}
        \caption{\revision{MIPLIB \texttt{neos5} example with the objective being formed from 8 vertices.}}
        \label{fig:miplibneos58StC}
    \end{subfigure}
    \caption{\revision{Primal-dual convergence comparison between \package{} with and without using strong convexity on MIPLIB \texttt{neos5} instances.}}
    \label{fig:miplibStrongConvexity}
\end{figure}

It is interesting to note that \texttt{BnB-Ipopt} shows better performance on both types of the portfolio problem, see \cref{fig:portfolioMixedTS} and \cref{tab:SummaryOfComparison}. 
The evaluation of an individual node is faster within \texttt{BnB-Ipopt} than in \package.
But the total number of nodes to evaluate is significantly smaller for \package{} than for \texttt{BnB-Ipopt}
Notice that \package{} has to evaluate roughly two-thirds fewer nodes than \texttt{BnB-Ipopt}, see \cref{fig:miplibConvergence}.
Consequently, if \texttt{BnB-Ipopt} performs better on instances (of larger dimension), we can conclude that \package{} has to process a comparatively large number of nodes and hence, the solution time increases.
In \cref{fig:ProgressPortfolioInteger}, we can see the progress plot for a Pure Integer Portfolio instance. 
The incumbent continuously improves for the first 200 nodes, hinting that problem-specific heuristics could be beneficial.

\begin{figure}
    \centering
        \includegraphics[width=0.6\textwidth]{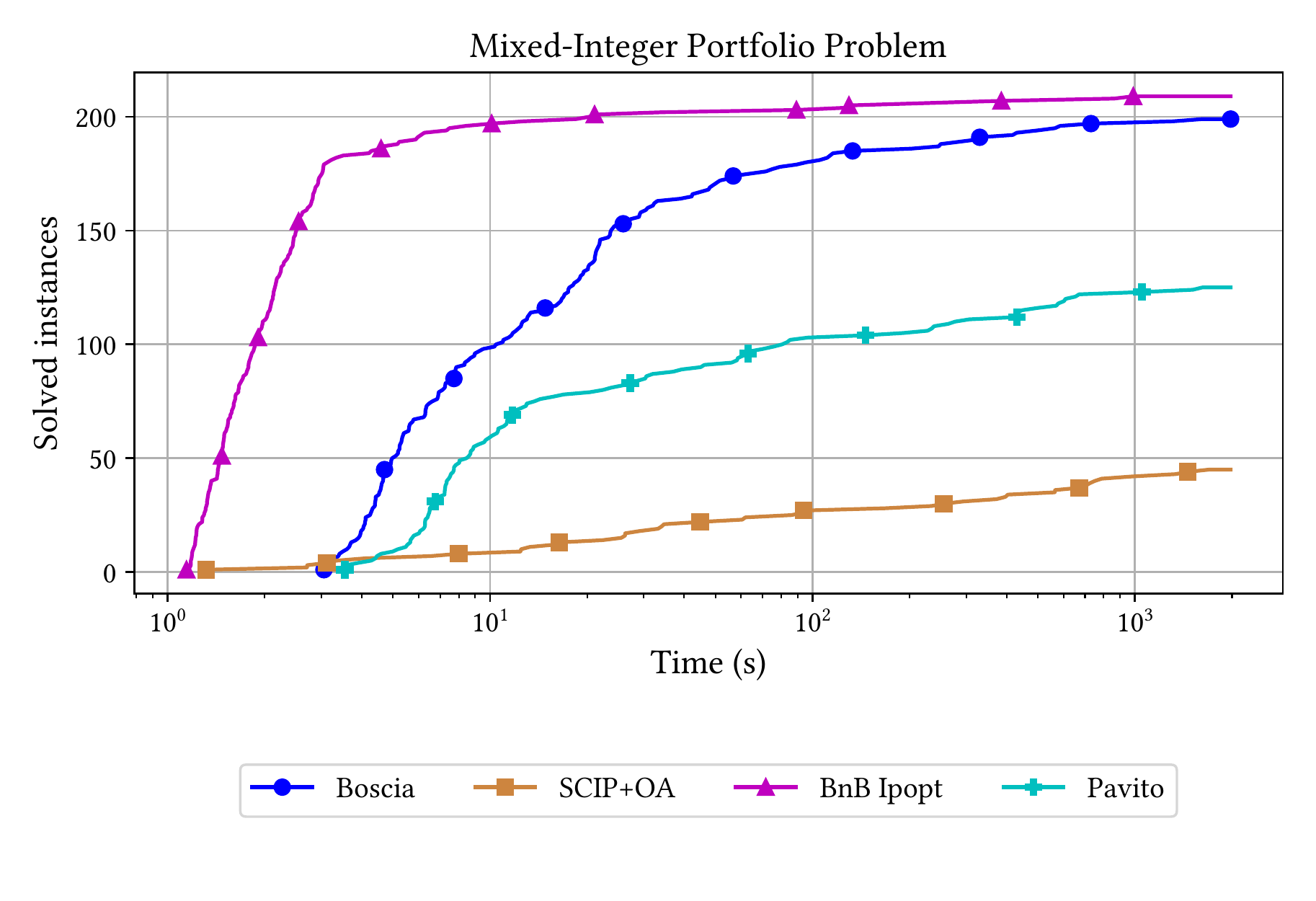}
        \caption{\revision{Termination on the Mixed Integer Portfolio instances}}
    \label{fig:portfolioMixedTS}
\end{figure}

\begin{figure}
    \centering
    \begin{subfigure}[t]{0.49\textwidth}
        \centering
        \includegraphics[width=1.0\textwidth]{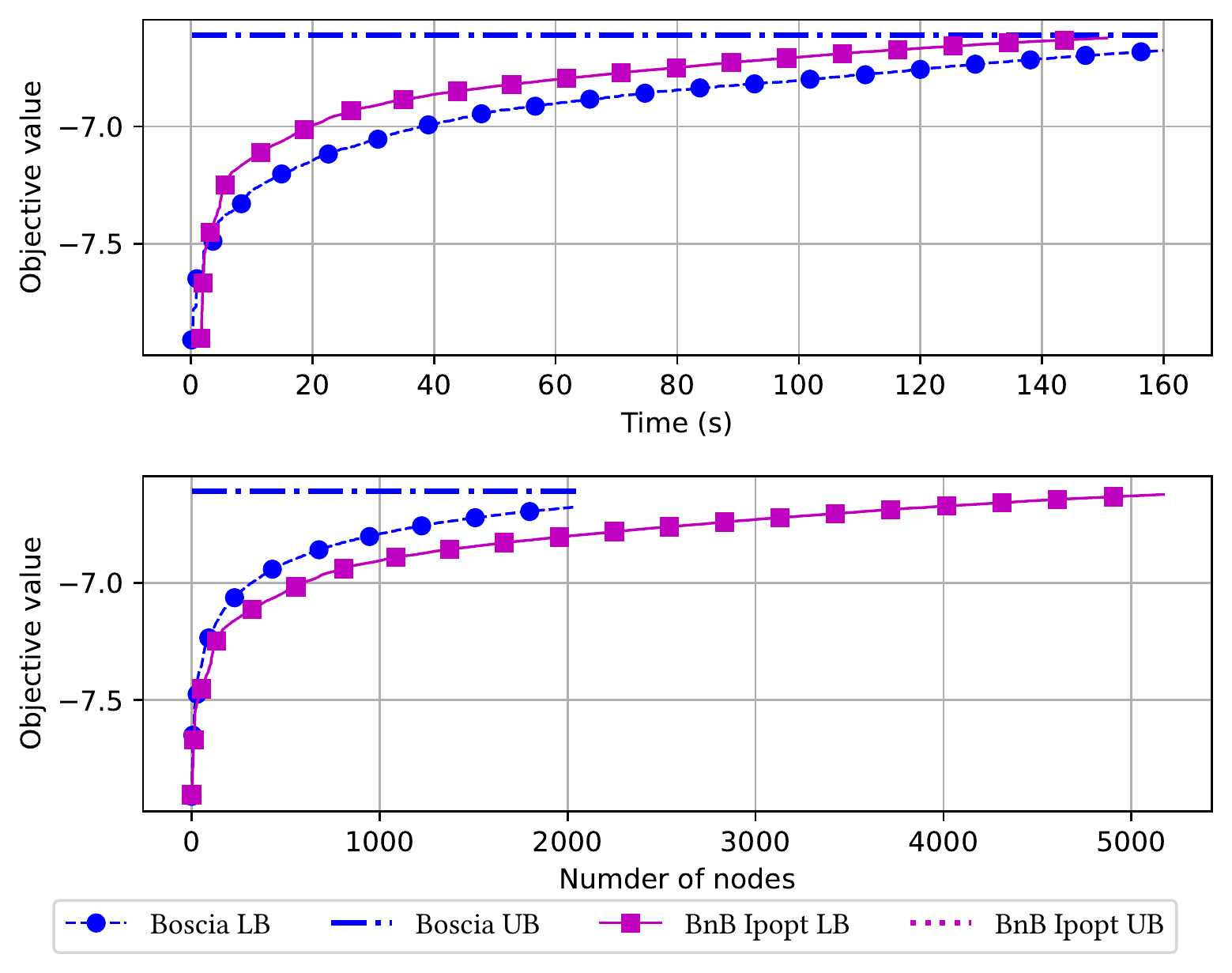}
        \caption{\revision{MIPLIB \texttt{neos5} example with the objective being formed from 6 vertices.}}
        \label{fig:miplibNeos56}
    \end{subfigure}
    \hfill
    \begin{subfigure}[t]{0.49\textwidth}
        \centering
        \includegraphics[width=1.0\textwidth]{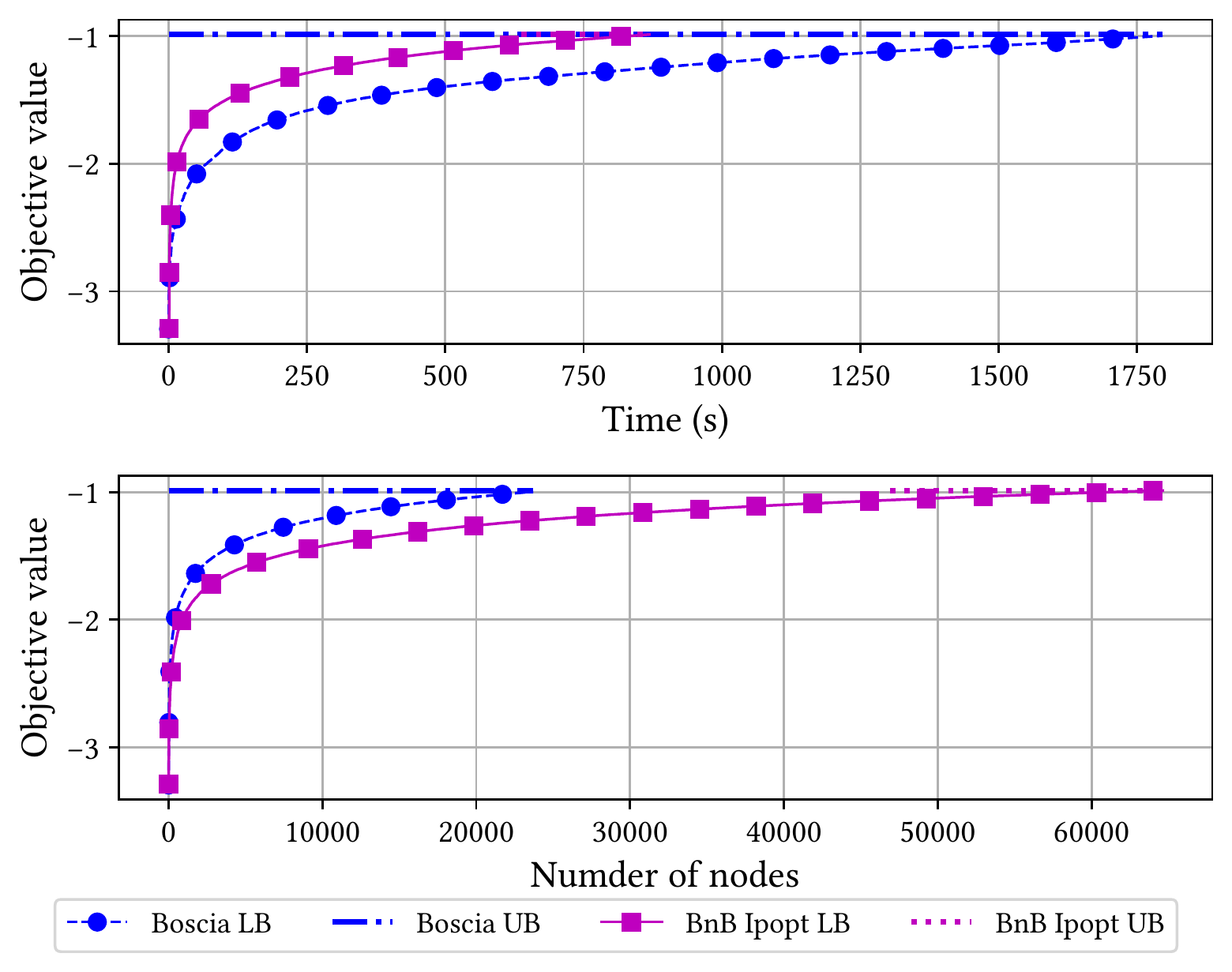}
        \caption{\revision{MIPLIB \texttt{neos5} example with the objective being formed from 8 vertices.}}
        \label{fig:miplibneos58}
    \end{subfigure}
    \caption{\revision{Primal-dual convergence comparison between \package{} and \texttt{BnB-Ipopt} on MIPLIB \texttt{neos5} instances.}}
    \label{fig:miplibConvergence}
\end{figure}

\begin{figure}
    \centering
    \includegraphics[width=0.6\textwidth]{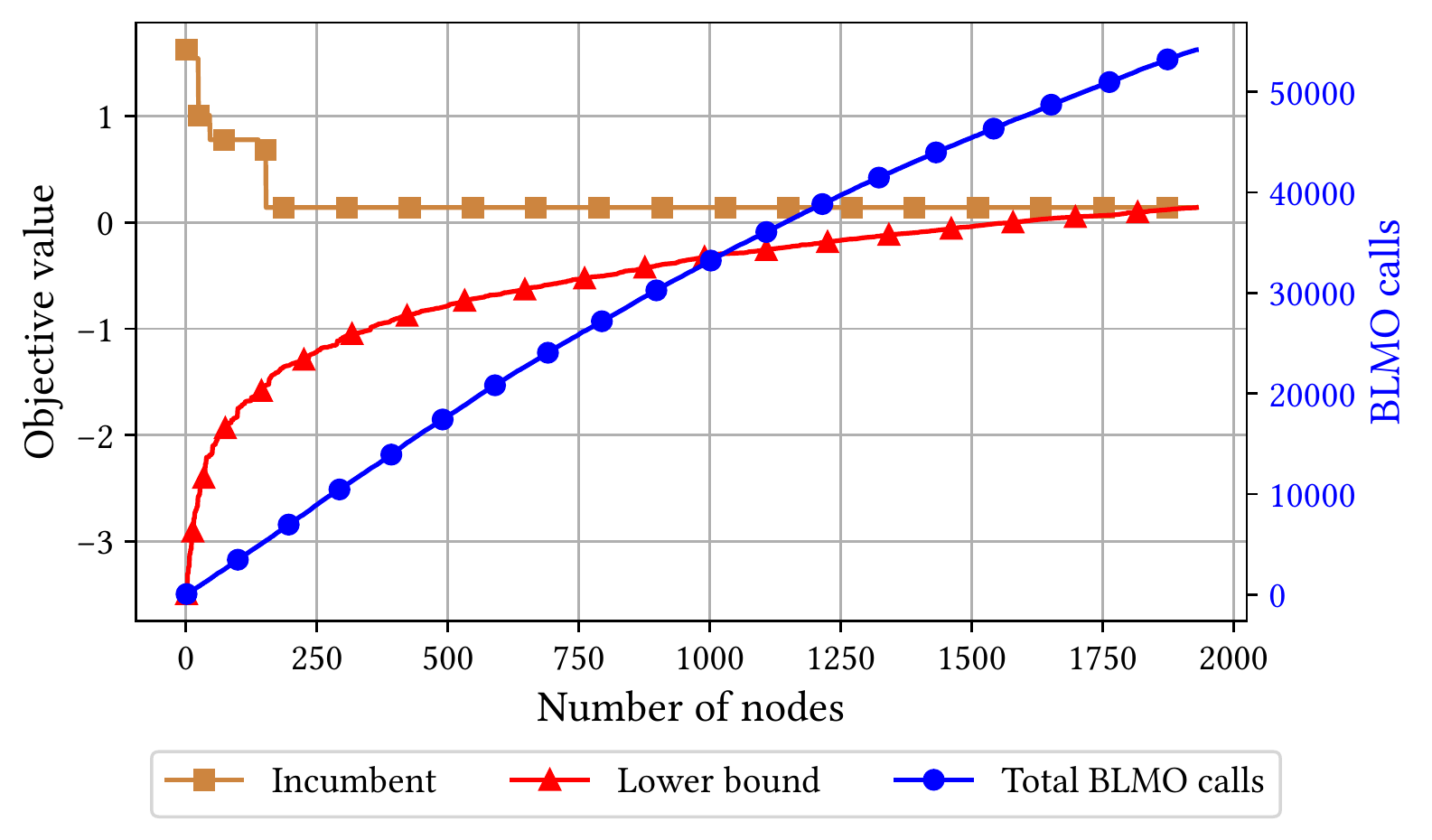}
    \caption{\revision{Progress of the lower bound, incumbent and accumulated number of LMO calls over the number of nodes for a Pure Integer Portfolio instance with 35 integer variables.}}
    \label{fig:ProgressPortfolioInteger}
\end{figure}

With regards to \texttt{SCIP+OA}, one main observation is that it performs well on small instances. 
For larger instances, the addition of cutting planes can lead to numerically ill-conditioned problems.
\revision{In particular for the Sparse Regression Problem, the Sparse Log Regression Problem and the Poisson Regression Problem, there were instances
for which SCIP reported optimality but \package{} had a better primal bound.
It seems that numerical instabilities lead to \texttt{SCIP+OA} cutting away the actual optimal solution and returning a suboptimal solution.
Note that we counted instances not solved by \texttt{SCIP+OA} if the reported solution had an absolute difference of 0.01 to the primal bound of \package.
Given the magnitude of the objectives, this is equivalent to a relative gap of 0.05.}

\texttt{SCIP+OA} shows a very good performance on the Poisson Regression instances, see \cref{fig:poissonRegTS}.
Given the occurrence of false positives in the Poisson Regression instances, we should take these results with a grain of salt. 
\begin{figure}
    \centering
        \includegraphics[width=0.6\textwidth]{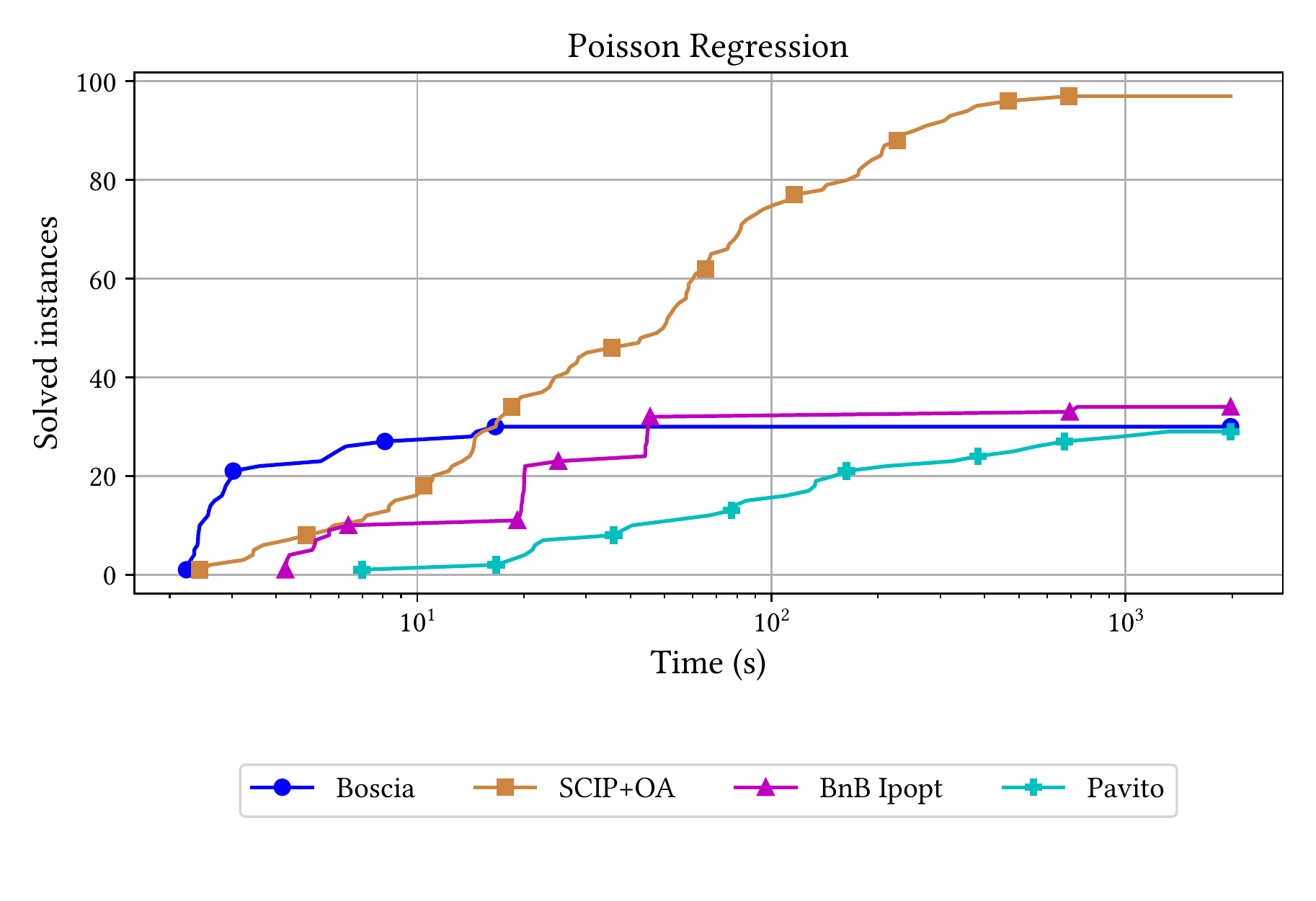}
        \caption{\revision{Termination on the Poisson Regression instances.}}
    \label{fig:poissonRegTS}
\end{figure}

\revision{The last of the solvers is \texttt{Pavito}. Its performance is similar or worse to \texttt{SCIP+OA}.
Notable exceptions are the Sparse Regression instances and the Mixed Integer Portfolio instances 
on which \texttt{Pavito} performs better than \texttt{SCIP+OA}, see \cref{fig:portfolioMixedTS,fig:sparseRegTS}.
It outperforms \package{} only on the MIPLIB \texttt{ran14x18-disj-8} instance. 
Like \texttt{SCIP+OA}, the addition of cutting planes causes the internal MIP to become increasingly numerically difficult to solve.}

\begin{figure}
    \centering
    \includegraphics[width=0.6\textwidth]{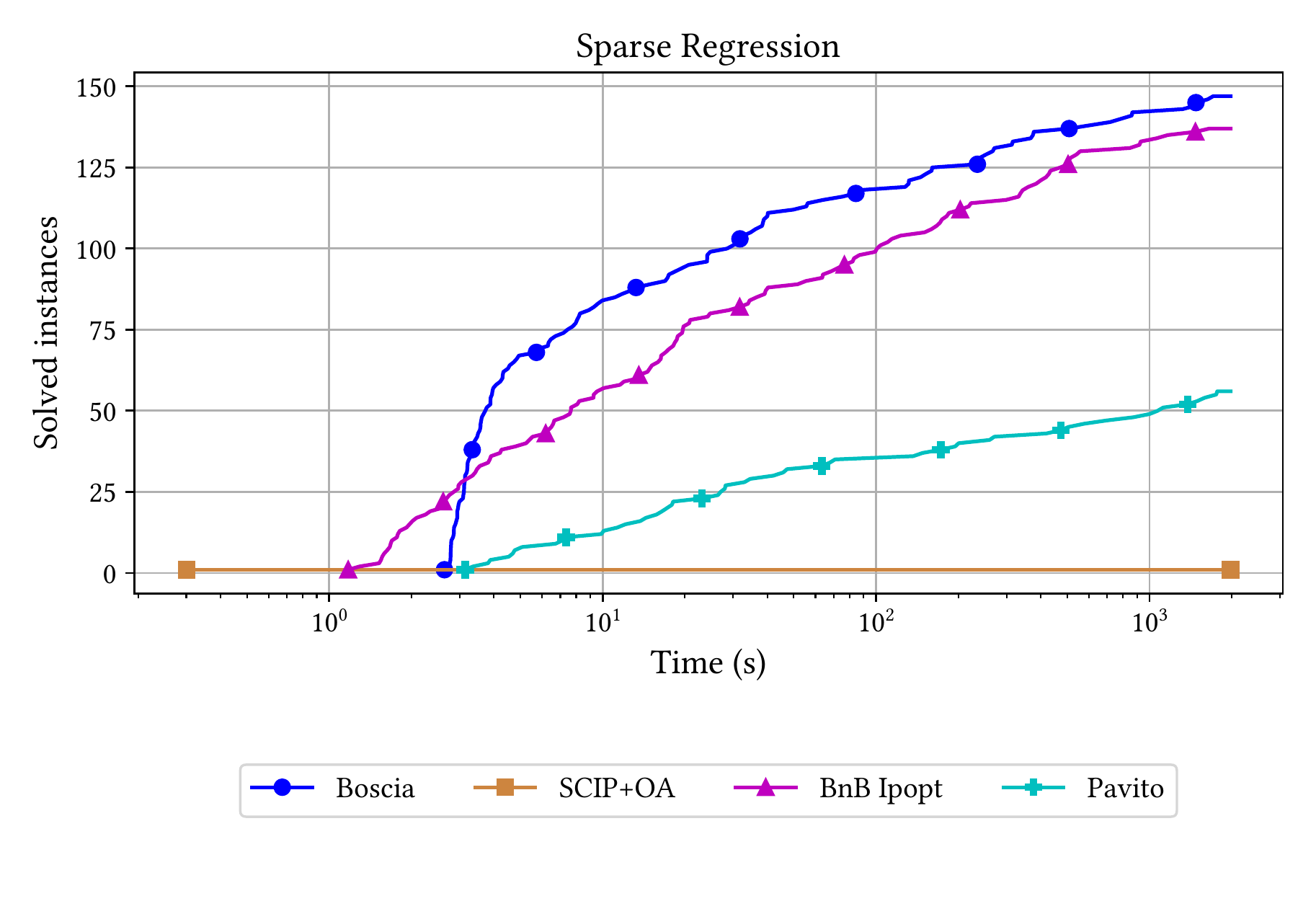}
    \caption{\revision{Termination on the Sparse Regression instances.}}
    \label{fig:sparseRegTS}
\end{figure}

Lastly, we experiment with the various settings of \package{} to investigate their impact on the performance. 
The detailed results can be found in \cref{tab:SummaryOfTighteningAndStrongConvexity}, comparing the impact of tightening and strong convexity, \cref{tab:SummaryOfWarmStart}, comparing the effect of the warm-start settings, and \cref{tab:SummaryOfBranching}, comparing the performance of the different branching strategies. 

\cref{fig:WarmStartIntegerPortfolio} compares the number of solved instances for the different warm start and shadow set settings as well as comparing using BPCG vs using Away-Frank-Wolfe (AFW).
\revision{Disabling warm starting has a visible negative effect on the performance. 
The effect of not utilizing the shadow set is not so pronounced pointing to the BLMO being relatively cheap. 
Using AFW instead of BPCG impacts the performance significantly. 
This is expected since BPCG converges faster than AFW and requires fewer BLMO calls.}

\begin{figure}
    \centering
    \includegraphics[width=0.6\textwidth]{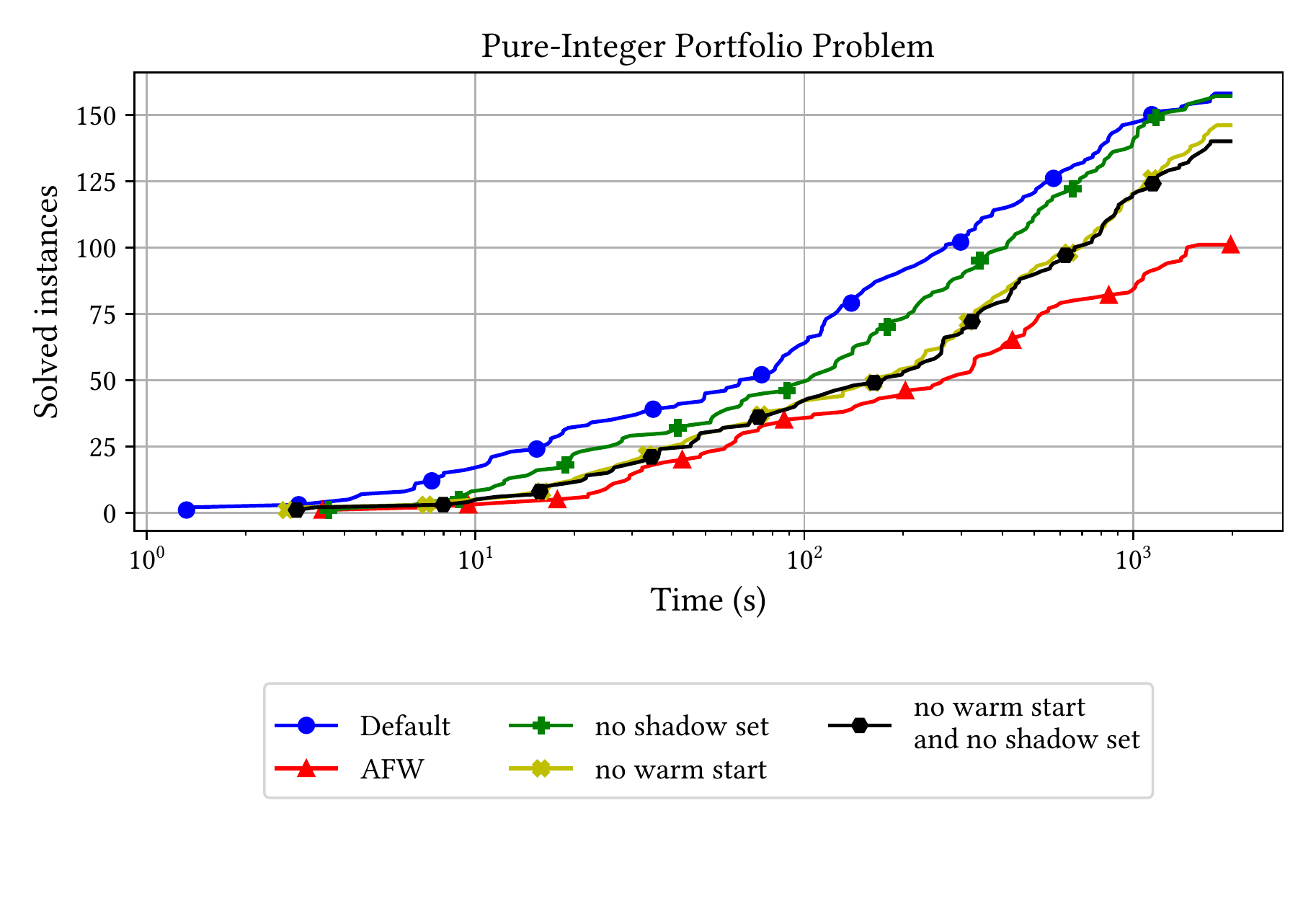}
    \caption{\revision{Comparing the effect of warm start, shadow and used Frank-Wolfe variant on the performance of \package{} on the Pure Integer Portfolio instances. Per default, \package{} uses both warm starting and the shadow set. The default Frank-Wolfe is BPCG.}}
    \label{fig:WarmStartIntegerPortfolio}
\end{figure}

The effects of the global and local tightening are problem-dependent as seen in \cref{fig:Tightenings}.
While local tightening has a positive effect on the Sparse Regression Problem, see \cref{fig:TighteningsSparseReg},
it increases both the solving time as well as the number of BLMO calls for the Pure Integer Portfolio problem (\cref{fig:TighteningsIntegerPortfolio}).
This is explained by the fact that tightening can render vertices of the shadow set infeasible and discard them,
these vertices cannot be used anymore and have to be replaced by new BLMO calls on the updated feasible region.

\begin{figure}
    \centering
    \begin{subfigure}[t]{0.49\textwidth}
        \centering
        \includegraphics[width=0.9\textwidth]{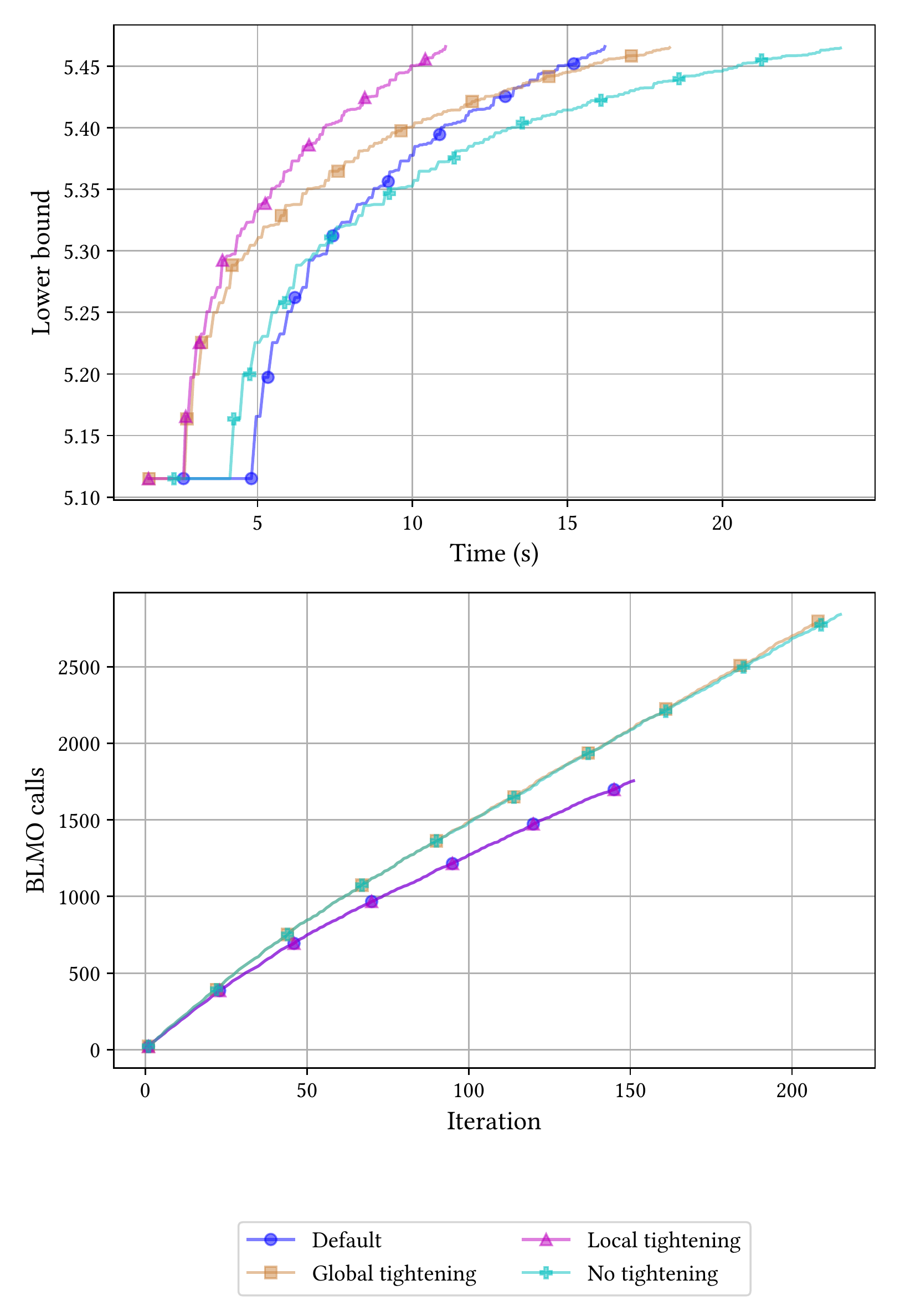}
        \caption{\revision{Sparse Regression instance with 115 integer variables.}}
        \label{fig:TighteningsSparseReg}
    \end{subfigure}
    \hfill
    \begin{subfigure}[t]{0.49\textwidth}
        \centering
        \includegraphics[width=0.9\textwidth]{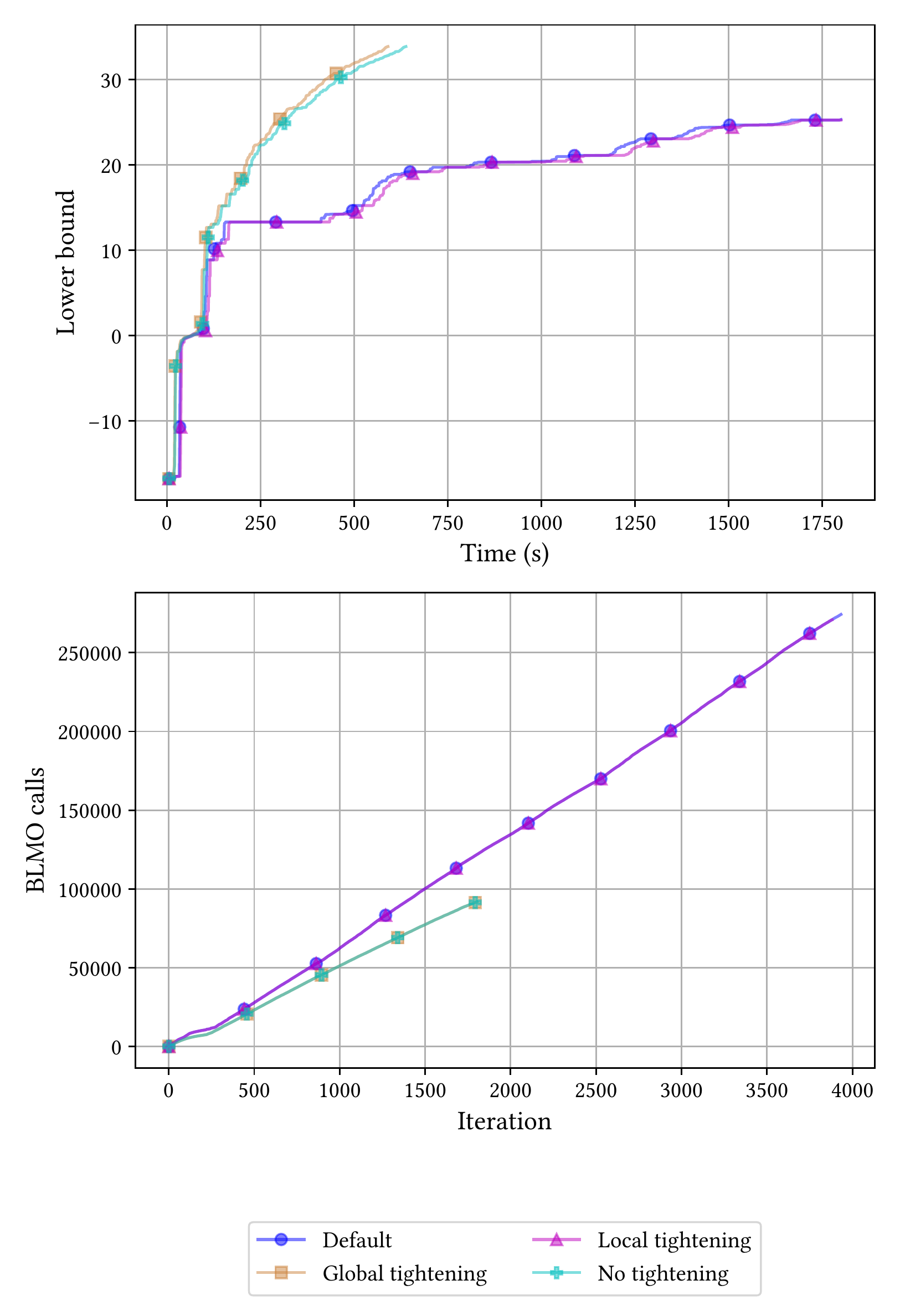}
        \caption{\revision{Pure Integer Portfolio instance with 120 variables.}}
        \label{fig:TighteningsIntegerPortfolio}
    \end{subfigure}
    \caption{\revision{Comparing the effect of the different tightening settings. The default setting uses both global and local tightening.}}
    \label{fig:Tightenings}
\end{figure}

In \cref{fig:BranchingMixedPortfolio}, the progress of the lower bound for the different branching strategies is displayed.
As expected, the strong branching rule is prohibitively expensive.
The hybrid branching rule fares better than the strong branching rule in terms of progress per node.
Nevertheless, it is expensive to perform even for a shallow depth.
Time-wise, the most fractional branching rule is still the best choice.
\revision{Although this is far from being the case in MILP, where most fractional branching performs on par with random branching,
we can highlight that previous studies \citep{gamrath2020exploratory} showed that dual degeneracy can partly explain the weakness of variable fractionality for branching.
Because of the nonlinear convexity of the MINLPs we consider, dual degeneracy is much less likely and fractionality is a reasonable choice. 
We leave studies of other branching rules applied to the \package{} algorithm to future work.}

\begin{figure}
    \centering
    \begin{subfigure}[t]{0.49\textwidth}
        \centering
        \includegraphics[width=0.9\textwidth]{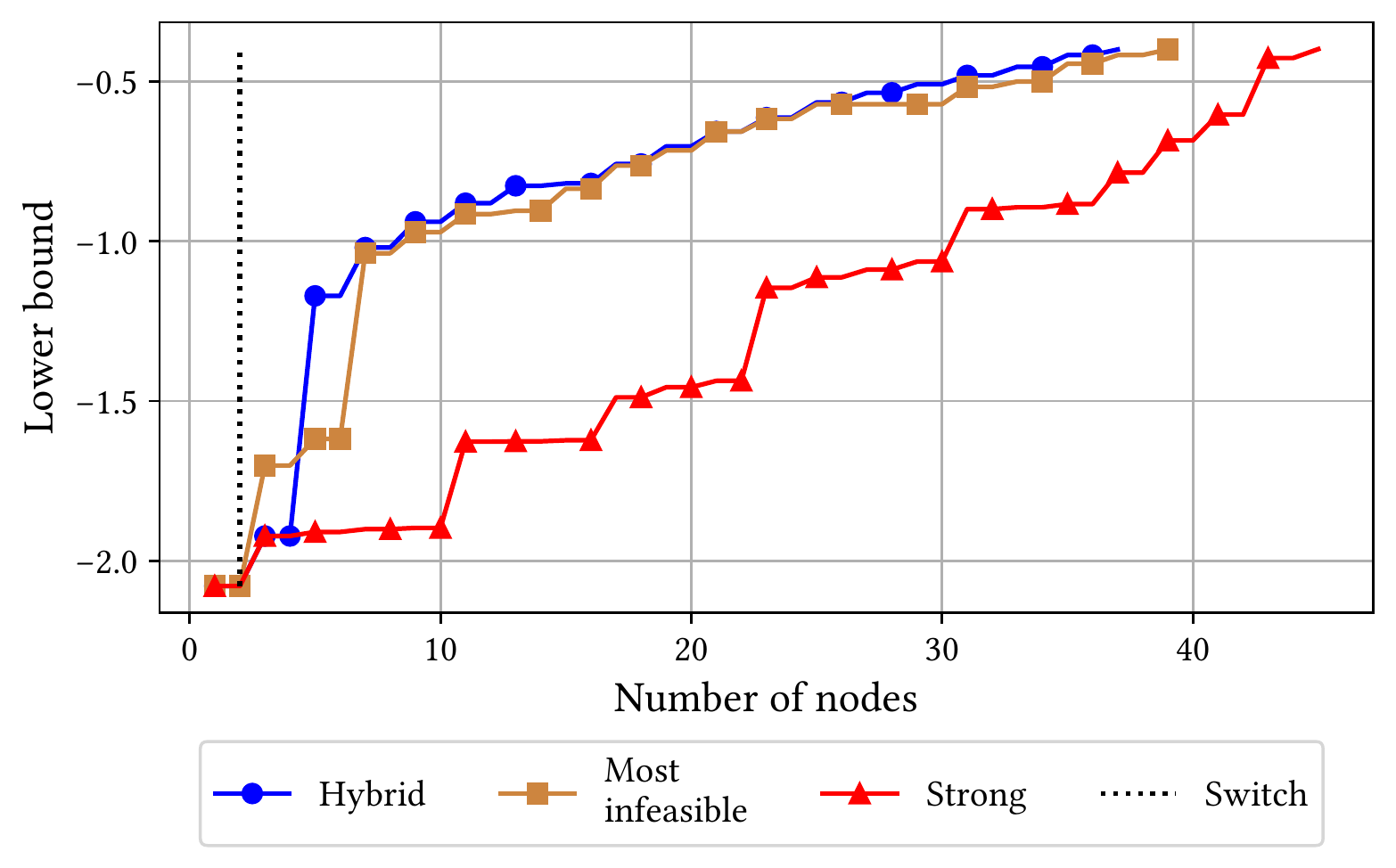}
        \caption{\revision{Lower bound progress over the number of nodes.}}
        \label{fig:BranchingNodesMixedPortfolio}
    \end{subfigure}
    \hfill
    \begin{subfigure}[t]{0.49\textwidth}
        \centering
        \includegraphics[width=0.9\textwidth]{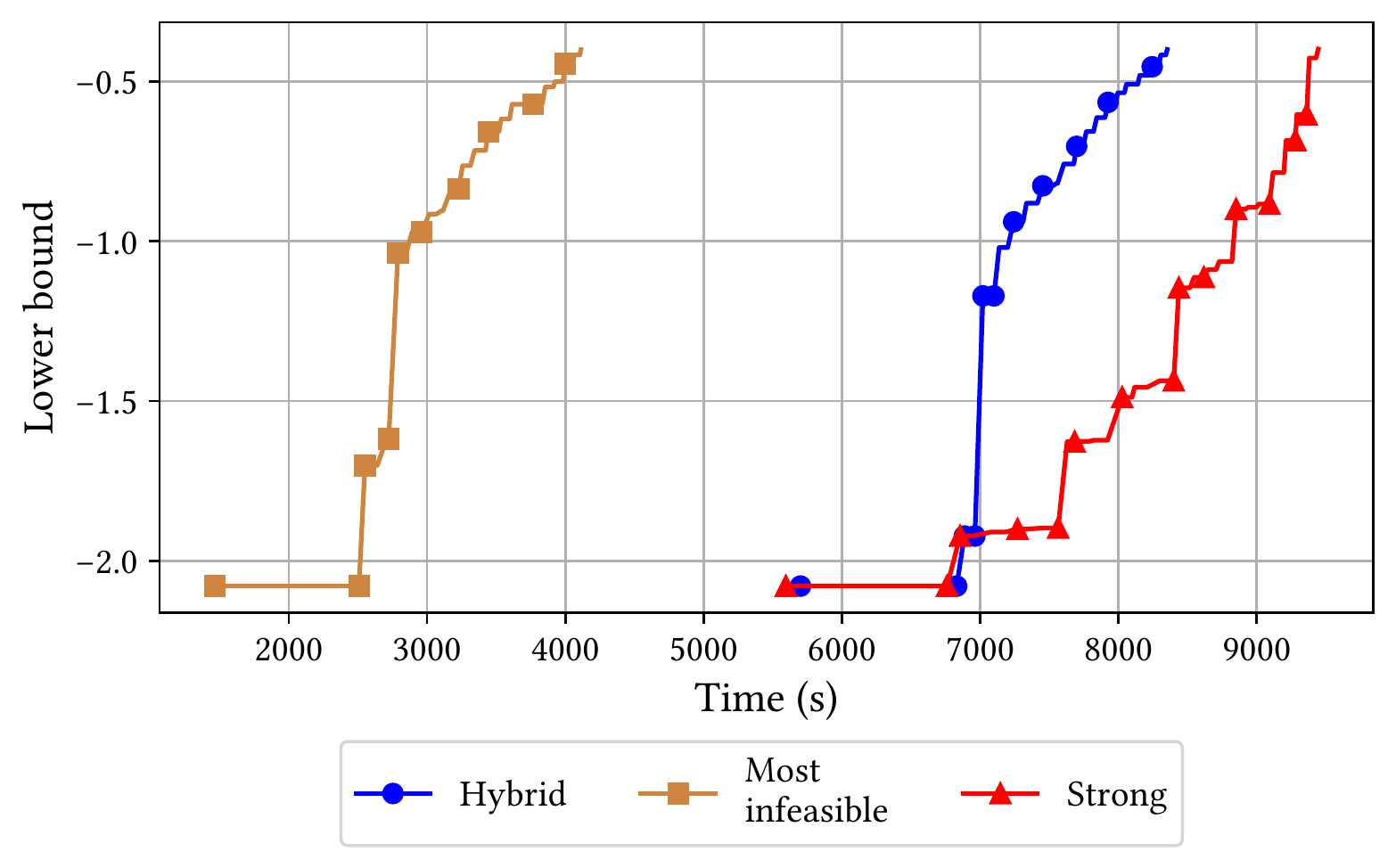}
        \caption{\revision{Lower bound progress over time.}}
        \label{fig:BranchingTimeMixedPortfolio}
    \end{subfigure}
    \caption{\revision{Comparing the branching strategies on a Mixed Integer Portfolio instance with 25 integer variables. In the hybrid branching, the depth setting is \# integer variables$/20$ is reached.}}
    \label{fig:BranchingMixedPortfolio}
\end{figure}

\begin{table}
    \centering
    \begin{tabular}{ll HrrH HrrH HrrH HrrH} 
        \toprule
        \multicolumn{2}{l}{} & \multicolumn{4}{c}{Boscia} & \multicolumn{4}{c}{B\&B Ipopt} & \multicolumn{4}{c}{SCIP+OA} & \multicolumn{4}{c}{Pavito}\tabularnewline 
        
        \cmidrule(lr){3-6}
        \cmidrule(lr){7-10}
        \cmidrule(lr){11-14}
        \cmidrule(lr){15-18}

        Problem & \thead{\# \\ inst.} & \thead{\# \\ solved} & \thead{\% \\ solved} & \thead{Time (s)} & \thead{Relative \\ Gap} & \thead{\# \\ solved} & \thead{\% \\ solved} & \thead{Time (s)} & \thead{Relative \\ Gap} & \thead{\# \\ solved} & \thead{\% \\ solved} & \thead{Time (s)} & \thead{Relative \\ Gap} & \thead{\# \\ solved} & \thead{\% \\ solved} & \thead{Time (s)} & \thead{Relative \\ Gap}  \\

        \midrule
        \thead{MIPLIB \texttt{22433}}   & 15           & 15         & \textbf{100 \%}        & 5.33       & 0.0            & 15        & \textbf{100 \%}       & 30.32     & 0.0             & 15         & \textbf{100 \%}        & \textbf{1.69} & 0.0  & 15         & \textbf{100 \%}        & 6.64       & 0.0         \\
        \midrule
        \thead{MIPLIB \texttt{neos5}}  & 15           & 14         & 93 \%         & 30.66      & \textbf{0.01}  & 15        & \textbf{100 \%} & \textbf{19.0}      & \textbf{0.01} & 5          & 33 \%         & 517.4      & 0.04           & 5          & 33 \%         & 460.85     & 0.05     \\
        \midrule
        \thead{MIPLIB \texttt{pg5\_34}} & 15           & 0          & 0 \%          & 1800.0     & 0.02           & 0         & 0 \%         & 1800.07   & Inf           & 0          & 0 \%          & 1800.04    & 0.18           & 0          & 0 \%          & 1800.0     & Inf         \\
        \midrule
        \thead{MIPLIB \texttt{ran14x18-disj-8}} & 15           & 0          & 0 \%          & 1802.86    & 0.09           & 0         & 0 \%         & 1800.16   & Inf           & 0          & 0 \%          & 1800.09    & 0.61           & 1          & \textbf{7 \%}          & \textbf{1797.79}    & 0.01               \\
        \midrule
        \thead{Poisson \\ Regression} & 120 & 30 & 25 \% & 406.01 & 3.79 & 34 & 28 \% & 516.04 & 0.17 & 116 & \textbf{81 \%} & \textbf{81.74} & 0.4 & 29 & 24 \% & 885.84 & 0.22  \\
        \midrule
        \thead{Pure \\ Integer \\ Portfolio}  & 210          & 158        & 75 \%         & 236.29     & \textbf{1.86}  & 196       & \textbf{93 \%}        & \textbf{26.97} & 1.89          & 30         & 14 \%         & 1162.41    & 54.59          & 10         & 5 \%          & 1656.61    & 0.01        \\
        \midrule
        \thead{Mixed \\ Integer \\ Portfolio} & 210          & 199        & 95 \%         & 19.47      & \textbf{0.72}  & 209       & \textbf{100 \%} & \textbf{2.82}    & 38.69         & 45         & 21 \%         & 903.36     & 10034.36       & 125        & 60 \%         & 137.68     & 64.96            \\
        \midrule
        \thead{Sparse \\ Regression}  & 160          & 147        & \textbf{92 \%}         & \textbf{25.14}      & \textbf{0.01}  & 137       & 86 \%        & 48.01     & \textbf{0.01} & 1        & 1 \% & 1720.39 & 0.35           & 56         & 35 \%         & 547.69     & \textbf{0.01}   \\
        \midrule
        \thead{Sparse Log \\ Regression} & 48           & 15         & \textbf{31 \%}         & \textbf{387.38}     & 0.19           & 14        & 29 \%        & 592.41    & 0.17          & 8        & 17 \% & 665.15 & 0.53           & 7          & 15 \%         & 1121.07    & \textbf{0.02}       \\
        \midrule
        \thead{Tailed Sparse \\ Regression}  & 160          & 160        & \textbf{100 \%} & \textbf{0.3} & \textbf{0.0}  & & & &   &  79   & 49 \% & 52.59 & \textbf{0.0}   & & & &   \\
        \midrule
        \thead{Tailed Sparse \\ Log Regression}   & 48    & 48  & \textbf{100 \%} & \textbf{0.85} & \textbf{0.0} & & & &   & 36   & 75 \%     & 129.89     & 109.2   & & & &    \\
        \bottomrule
    \end{tabular}
    \caption{\revision{Comparing the relative number of terminations and the time of all solver setups on all problems. The average time is taken using the geometric mean shifted by 1 second. Also, note that this is the average time over all instances in that group, i.e.,~it includes the time outs.}} 
\label{tab:SummaryOfComparison}
\end{table}

 \section{Conclusion}

In this paper, we proposed a novel algorithm for mixed-integer convex optimization
relying only on gradient and function evaluations of the objective, and linear optimization over the feasible set.
By embedding a FW-based subsolver within a branch-and-bound framework, our method
does not rely on separation or projection subproblems.
Since FW algorithms rely on LMO calls to handle the constraint set, we significantly
strengthen convex relaxations by optimizing over the local integer hull, leveraging the capabilities of modern MIP solvers.
Lazification techniques within and across nodes avoid expensive MIP solves by
exploiting all vertices that have been discovered and further MIP information,
while tightening and stronger dual bounds reduce the size of the branch-and-bound tree and tighten the feasible region.

\section*{Acknowledgments}

Research reported in this paper was partially supported through the Research Campus Modal funded by the German Federal Ministry of Education and Research (fund numbers 05M14ZAM,05M20ZBM) and the Deutsche Forschungsgemeinschaft (DFG) through the DFG Cluster of Excellence MATH+.

\bibliographystyle{icml2021}
\bibliography{refs}

\clearpage 
\newpage
\appendix
\section{Blended Pairwise Conditional Gradient}\label{app:bpcg}

We present in \cref{alg:bpcgshadow} the modified Lazy Blended Pairwise Conditional Gradient (L-BPCG) from \citet{tsuji2021sparser} to solve relaxations at each node,
combined with the adaptive step size proposed in \cite{pokutta2024frank} which dynamically estimates a local bound on the Lipschitz constant.
The procedure populates the shadow set with dropped vertices and starts from the warm-started active set.
The convergence follows from that of BPCG, we use the same progress measure on whether a shadow vertex
offers sufficient decrease as FW direction (vertex the iterates moves towards)
as we do for a classic pairwise step. The additional shadow vertex selection can be viewed as a special case of
a pairwise step where the initial weight in the active set is zero.

\begin{algorithm}
\caption{Lazy Blended Pairwise Conditional Gradient with Shadow Set and Cutoff}\label{alg:bpcgshadow}
\begin{algorithmic}[1]
\Require Starting active set $\mathcal{A}_0$ and weights $\lambda$, shadow set $\mathcal{S}_0$, function $f$, feasible set of current node $P$, $\varepsilon_{tol}$, accuracy $K \geq 1$, primal bound $\hat{f}$.
\Ensure Final iterate $\vx$ such that $f(\vx) \leq f^* + \varepsilon_{\text{tol}}$ or a dual bound greater than $\hat{f}$
\State $\vx_0 \gets \sum_{k = 1}^{|\mathcal{A}_0|} \lambda_k \vvv_k$
\State $\Phi_0 \gets \max_{\vvv\in P} \innp{\nabla f(\vx_0)}{\vvv}/2$
\State $t = 0$
\While{$\Phi_t > \varepsilon_{\text{tol}}$ and $f(\vx_t) - \Phi_t < \hat{f}$} \Comment{Near-optimal or pruned by the bound}
\State $a_t \gets \argmax_{\vvv \in \mathcal{A}_t} \innp{\nabla f(\vx_t)}{\vvv}$ \Comment{away vertex}
\State $s_t \gets \argmin_{\vvv \in \mathcal{A}_t} \innp{\nabla f(\vx_t)}{\vvv} $ \Comment{local forward vertex}
\If{$\innp{\nabla f(\vx_t)}{a_t - s_t} \geq \Phi_t$}
\State $d_t \gets a_t - s_t$
\State $\gamma_{\text{max}} \gets \texttt{weight\_of}(\mathcal{A}_t, a_t)$
\State $\gamma_t \gets \argmin{\gamma \in \left[0,\gamma_{\text{max}}\right]} f(x - \gamma d_t)$
\State $\vx_{t+1} \gets \vx_{t} - \gamma_t d_t$
\State $\Phi_{t+1} \gets \Phi_{t}$
\If{$\gamma_t < \gamma_{\text{max}}$} \Comment{descent step}
    \State $\mathcal{A}_{t+1} \gets \mathcal{A}_{t}$
\Else\Comment{drop step}
    \State $\mathcal{A}_{t+1} \gets \mathcal{A}_{t} \backslash \{a_t\} $
    \State $\mathcal{S}_{t+1} \gets \mathcal{S}_{t} \cup \{a_t\}$
\EndIf
\Else
\State $s_t \gets \argmin_{\vvv \in \mathcal{S}_t} \innp{\nabla f(\vx_t)}{\vvv} $ \Comment{forward vertex from dropped vertices}
\If{$\innp{\nabla f(\vx_t)}{a_t - s_t} \geq \Phi_t$}
\State $\mathcal{A}_{t+1} \gets \mathcal{A}_{t} \cup \{s_t\} $
\State $\mathcal{S}_{t+1} \gets \mathcal{S}_{t} \backslash \{s_t\}$
\State descent or drop step
\Else
    \State $w_t \gets \argmin_{\vvv \in P} \innp{\nabla f(\vx_t)}{\vvv}$ \Comment{global LMO}
    \If{ $\innp{\nabla f(\vx_t)}{\vx_t - w_t} \geq \Phi_t / K$}
        \State $d_t = \vx_t - w_t$
        \State $\gamma_t \gets \argmin_{\gamma \in \left[0,1\right]} f(x - \gamma d_t)$
        \State $\vx_{t+1} \gets \vx_{t} - \gamma_t d_t$
        \State $\Phi_{t+1} \gets \Phi_{t}$
        \State $\mathcal{A}_{t+1} \gets \mathcal{A}_{t} \cup \{w_t\} $
    \Else
        \State $\vx_{t+1} \gets \vx_{t}$
        \State $\Phi_{t+1} \gets \Phi_{t} / 2$
        \State $\mathcal{A}_{t+1} \gets \mathcal{A}_{t} $
    \EndIf
\EndIf
\EndIf
\State $t \gets t + 1$
\EndWhile
\end{algorithmic}
\end{algorithm}

\section{Tighter lower bound and dual tightening inequalities}\label{app:bounds}

In this appendix, we provide a modified lower-bound inequality when the function is sharp, and strengthen the dual tightening condition
when it is strongly convex.\\

For many objective functions of interest, strong convexity is too strong of a requirement, if they are not strongly convex or
if it offers a loose inequality. We derive an equivalent improved dual bound for functions presenting a \emph{Hölder error bound}
or \emph{sharpness} property \citep{hoffman2003approximate,nemirovskii1985optimal,polyak1979sharp}.
We also highlight that Frank-Wolfe algorithms benefit from accelerated rates under a sharpness assumption \citep{kerdreux2022restarting}.

\begin{definition}[Hölder Error Bound]
Let $f$ be a function $f: X\rightarrow\mathbb{R}$ and $C$ a compact neighborhood around the minimizer set $X^* := \argmin_{\vx\in X} f(\vx)$. The function $f$ satisfies a $(\theta,M)$ Hölder Error Bound on $C$ if $\exists\, \theta \in \left[0,1/2\right]$ and $M>0$, such that:
\begin{equation}\label{eq:holder}
\min_{\vx^*\in X^*} \|\vx - \vx^*\| \leq M (f(\vx) - f^*)^{\theta}.
\end{equation}
\end{definition}

\noindent
We will further assume that the optimum is unique $X^* := \{\vx^*\}$.
Denoting by $\vx^*_l$ 
the optimal value of the solution after branching, we have:
\begin{align*}
    \norm{\vx^*_l - \vx^*} \leq & M (f(\vx^*_l) - f^*)^{\theta} \\
    f(\vx^*_l) \geq & M^{-1/\theta}  \norm{\vx^*_l - \vx^*}^{1/\theta} + f(\hat{\vx}) -  g(\hat{\vx}).
\end{align*}
Furthermore:
\begin{align*}
    \norm{\vx^*_l - \vx^*} & \geq \norm{\vx^*_l - \hat{\vx}} - \norm{\hat{\vx} - \vx^*}\\
    & \geq \norm{\vx^*_l - \hat{\vx}} - M (f(\hat{\vx}) - f^*)^{\theta}\\
    & \geq \norm{\vx^*_l - \hat{\vx}} - M (g(\hat{\vx}))^{\theta},
\end{align*}
resulting in the following bound:
\begin{align}
    f(\vx^*_l) \geq & M^{-1/\theta} (\norm{\hat{\vx} - \vx^*} - M (g(\hat{\vx}))^{\theta})^{1/\theta} + f(\hat{\vx}) -  g(\hat{\vx}),
\end{align}
where a lower bound on $\norm{\hat{\vx} - \vx^*}$ can be obtained similarly to the strong convexity result.
\noindent
We can also improve the dual bound tightening in case the function is $\mu$-strongly convex.

\begin{theorem}\label{the:dualstronger}
Let us assume that the bounds are $\left[\vl,\vu\right] \supseteq \Xset$ and that we have a relaxed solution $\vx^{(t)}$ and a variable $j \in J$
such that $\vx^{(t)}_j = \vl_j$ and $\nabla f(\vx^{(t)})_j \geq 0$.
Furthermore, if $f$ is $\mu$-strongly convex, then, if there exists $M \in \{1, \dots, \vu_j-\vl_j\}$, such that:
\begin{align}\label{eq:dualtighteningstrong}
    M \nabla f(\vx^{(t)})_j > \mathrm{UB} - f(\vx^{(t)}) + g(\vx^{(t)}) - \frac{\mu}{2} M^2,
\end{align}
with $g(\cdot)$ the Frank-Wolfe gap:
\begin{align*}
g(\vx) := \max_{\vx \in \Xset} \innp{\nabla f(\vx^{(t)})}{\vx^{(t)} - \vx},
\end{align*}
then the upper bound can be tightened to $\vx^{(t)}_j \leq \vl_j + M - 1$.    
\end{theorem}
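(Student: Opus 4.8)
The plan is to mirror the proof of \cref{the:dualtightening} step by step, replacing the first-order convexity inequality with the sharper strong-convexity inequality and carrying the extra quadratic term through the same chain of estimates. Concretely, $\mu$-strong convexity gives, for all $\vx \in \Xset$,
\[
f(\vx) - f(\vx^{(t)}) \geq \innp{\nabla f(\vx^{(t)})}{\vx - \vx^{(t)}} + \frac{\mu}{2}\norm{\vx - \vx^{(t)}}^2,
\]
which differs from the bound used before only by the additive term $\frac{\mu}{2}\norm{\vx - \vx^{(t)}}^2 \geq 0$. I would again introduce $\Xset_M := \{\vx \in \Xset \mid \vx_j \geq \vl_j + M\}$ and derive a lower bound on $\min_{\vx \in \Xset_M} f(\vx) - f(\vx^{(t)})$.

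The key new observation is that on $\Xset_M$ the strong-convexity term can be bounded below by a constant independent of $\vx$. Since $\vx^{(t)}_j = \vl_j$ and every $\vx \in \Xset_M$ satisfies $\vx_j \geq \vl_j + M$, we have $(\vx - \vx^{(t)})_j \geq M$, hence $\norm{\vx - \vx^{(t)}}^2 \geq (\vx_j - \vx^{(t)}_j)^2 \geq M^2$. Thus $\frac{\mu}{2}\norm{\vx - \vx^{(t)}}^2 \geq \frac{\mu}{2}M^2$ uniformly over $\Xset_M$, and this constant can be pulled out of the minimization. The gradient terms are handled exactly as in \cref{the:dualtightening}: the $j$-th coordinate contributes $M\,\nabla f(\vx^{(t)})_j$ (using $\nabla f(\vx^{(t)})_j \geq 0$, so the minimum is attained at $\vx_j = \vl_j + M$), and the remaining coordinates are lower-bounded by enlarging $\Xset_M$ to $\Xset$ and recognizing the Frank-Wolfe gap $g(\vx^{(t)})$. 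Combining these yields
\[
\min_{\vx \in \Xset_M} f(\vx) - f(\vx^{(t)}) \geq M\,\nabla f(\vx^{(t)})_j + \frac{\mu}{2}M^2 - g(\vx^{(t)}).
\]

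To conclude, I would argue by contraposition as before: if the global optimum lay in $\Xset_M$, then $\mathrm{UB} - f(\vx^{(t)})$ would be at least the right-hand side above; so whenever inequality \eqref{eq:dualtighteningstrong} holds, namely $M\,\nabla f(\vx^{(t)})_j > \mathrm{UB} - f(\vx^{(t)}) + g(\vx^{(t)}) - \frac{\mu}{2}M^2$, no optimal solution can satisfy $\vx_j \geq \vl_j + M$, tightening the bound to $\vx_j \leq \vl_j + M - 1$. The main point requiring care is the uniform estimate $\norm{\vx - \vx^{(t)}}^2 \geq M^2$ on $\Xset_M$: one must verify that it is legitimate to decouple the quadratic term from the linear term in the joint minimization, which works precisely because the quadratic term admits the fixed lower bound $\frac{\mu}{2}M^2$ over the entire set $\Xset_M$, leaving the linear part to be estimated exactly as in the non-strongly-convex case. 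Everything else is a verbatim repetition of the earlier argument, so no further obstacle arises.
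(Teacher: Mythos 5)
Your proposal is correct and follows essentially the same route as the paper's proof: start from the $\mu$-strong-convexity inequality, minimize over $\Xset_M$, bound the quadratic term below by $\frac{\mu}{2}M^2$ using $(\vx-\vx^{(t)})_j\geq M$, handle the gradient terms exactly as in \cref{the:dualtightening}, and conclude by contraposition. Your explicit justification for decoupling the quadratic term from the linear one via a uniform lower bound over $\Xset_M$ is precisely the step the paper states more tersely ("the optimum in $\Xset_M$ has at least the $j$-th variable at distance $M$").
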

\begin{proof}
Given the iterate $\vx^{(t)}$, we starting from the strong convexity property:
\begin{align*}
& \innp{\nabla f(\vx^{(t)})}{\vx - \vx^{(t)}} + \frac{\mu}{2} \norm{\vx - \vx^{(t)}}^2 \leq  f(\vx) - f(\vx^{(t)}) \,\,\forall \vx \in \Xset.
\end{align*}

Taking the minimum over all solutions $\vx \in \Xset_M$, we obtain:
\begin{align*}
    \min_{\vx \in \Xset_M} f(\vx) - f(\vx^{(t)}) \geq\, & \min_{\vx \in \Xset_M} \vx_j \nabla f(\vx^{(t)})_j + \sum_{k\neq j} \nabla f(\vx^{(t)})_k (\vx - \vx^{(t)})_k +  \frac{\mu}{2} \norm{\vx - \vx^{(t)}}^2 \\
    \geq\, & \min_{\vx \in \Xset_M} \vx_j \nabla f(\vx^{(t)})_j + \min_{\vx \in \Xset_M} \sum_{k\neq j} \nabla f(\vx^{(t)})_k (\vx - \vx^{(t)})_k + \min_{\vx \in \Xset_M} \frac{\mu}{2} \norm{\vx - \vx^{(t)}}^2 \\
    \geq\, & M \nabla f(\vx^{(t)})_j - g(\vx^{(t)}) + \frac{\mu}{2} M^2,
\end{align*}
since the optimum in $\Xset_M$ has at least the $j$-th variable at distance $M$ from $\vx^{(t)}$.
This results in the following modification of inequality~\eqref{eq:validcomplement}:
\begin{align}\label{eq:validcomplementstrong}
    & \mathrm{UB} - f(\vx^{(t)}) \geq  M \nabla f(\vx^{(t)})_j - g(\vx^{(t)}) + \frac{\mu}{2} M^2.
\end{align}
Therefore, if \eqref{eq:validcomplementstrong} does not hold, i.e.:
\begin{align*}
    & M \nabla f(\vx^{(t)})_j > \mathrm{UB} - f(\vx^{(t)}) + g(\vx^{(t)}) - \frac{\mu}{2} M^2,
\end{align*}
we can deduce $\vx_j \leq \vl_j + M - 1$.
\end{proof}

\section{Detailed and extended experiments results}\label{app:DetailedExperiments}

\revision{In this section, we present a more detailed overview of the computational experiments. 
\cref{tab:SummaryByDifficultyMIPLIP22433,tab:SummaryByDifficultyMIPLIPneos5,tab:SummaryByDifficultyMIPLIPpg534,tab:SummaryByDifficultyMIPLIPran14x18,tab:SummaryByDifficultyPoisson,tab:SummaryByDifficultyPortfolioInteger,tab:SummaryByDifficultyPortfolioMixed,tab:SummaryByDifficultySparseReg,tab:SummaryByDifficultySparseLogReg,tab:SummaryByDifficultyTailedSparseReg,tab:SummaryByDifficultyTailedSparseLogReg} 
compare the performance of the different solver setups. 
In addition to the solver listed in \cref{sec:CompuationalExperiments}, we present here also the results using \texttt{SHOT}. 
\texttt{SHOT} is another well established Outer Approximation solver \citep{lundell2022supporting,kronqvist2016extended}.
Observe that \texttt{SHOT} requires the nonlinear objective in expression form and stores it in expression trees.
Thus, it explicitly knows the objective and can utilize this knowledge in the problem-solving process.

In contrast, \package{} only has access to the objective and its gradient via an oracle.
The oracle structure is useful in cases where the expression format is hard to generate.
An example is the A-criterion for Optimal Design Problems which amounts to minimizing the trace of the inverse of a matrix, see \citet{design_of_experiments_boscia_23}.
This difference in access and knowledge of the objective should be noted then comparing the performance of \package{} to \texttt{SHOT}.

In addition, \citet{design_of_experiments_boscia_23} showcases how combinatorial solution methods can be used as the BLMO within \package{}. 
Another usage of the solver is in \citet{SharmaNetworkDesignTrafficFrankWolfe24} which tackles a Network Design Problem. 
For this problem, instances were also tackled with \texttt{SCIP} directly as a MINLP solver. 
The experiments showed that \texttt{SCIP} could suffer from numerical instabilities in the LP resulting from the polyhedral approximation \citep{SharmaNetworkDesignTrafficFrankWolfe24}.}

All tables have the same format. The instances for each problem are split into increasingly smaller subsets depending on their minimum solve time, i.e.,~the minimum time any of the solvers took to solve it. 
The cut-offs are at 0 seconds (all problems), took at least 10 seconds to solve, 300 s, 600 s and lastly 1200 s. 
Note that if none of the solvers terminates on any instance of a subset, the corresponding row is omitted from the table. 
The same procedure applies if there are no changes between rows.
For each solver, the relative number of terminations, the average time and the average relative gap with respect to the lower bound of \package{} are displayed.
The average time is taken using the geometric mean shifted by 1 second. 
Also, note that this is the average time over all instances in that group, i.e.,~it includes the time outs.

In \cref{tab:SummaryOfTighteningAndStrongConvexity}, we compare the effect of the different dual tightening settings and strong convexity utilization.
Tightening can be performed locally as well as globally. Per default, both local and global tightening are activated. 
The tightening of the lower bound via strong branching was only tested on the MIPLIB instances.
As seen in \cref{fig:miplibStrongConvexity}, using strong convexity can reduce the number of nodes to be processed and thus the time. It seems that for instances of small size, however, it is not very effective. 
The effects of the tightenings differ between the problem classes. 
For some examples, local tightening seems to cause computational overhead. e.g.~for the Pure Integer and Mixed Integer Portfolio.
On the other hand, for the Sparse Regression Problem, it has a clear positive effect. 
For the many problems, using both types of tightening is advisable. 

\cref{tab:SummaryOfWarmStart} showcases the impact of the warm starting settings as well as comparing BPCG to Away-Frank-Wolfe as the node solver. 
Per default, both the warm start and the usage of the shadow set are activated. 
The results clearly show that enabling both warm start and the usage of the shadow set is a good choice. 
Disabling the shadow set can be useful if the BLMO is very cheap like in \cite{design_of_experiments_boscia_23} where the feasible region is simply a scaled probability simplex intersected with a hypercube.
In this scenario, both the storing of the shadow set as well as the inner product computation between the gradient and vertices in the shadow set are more expensive compared to the BLMO calls.

Lastly, in \cref{tab:SummaryOfBranching}, the performance of the different branching strategies is displayed. 
As expected, the most fractional branching rule performs the best. 
As mentioned before, the nonlinear convexity counteracts the dual degeneracy.
Hence, most fractional branching is a reasonable choice. 
The partial strong branching rule is still too expensive as expected. 
The hybrid strong branching rule with a shallow depth does seem more promising.
Nevertheless, it is time intensive. 
Our assumption is that problems with a very cheap BLMO might benefit from this branching rule.
Further experiments have to be done. 

\cref{fig:MIP22433Termination,fig:MIPneos5Termination,fig:PoissonTermination,fig:IntegerPortfolioTermination,fig:MixedPortfolioTermination,fig:SparseRegTermination,fig:SparseLogRegTermination,fig:TailedTermination,fig:TailedLogTermination} 
showcase the number of terminations over time for, at the top, the different solver setups, and, at the bottom, for the warm start settings and Away-Frank-Wolfe as node solver.
In \cref{fig:ProgressSparseReg,fig:ProgressIntegerPortfolio,fig:ProgressMixedPortfolio,fig:ProgressPoisson}, the progress of the lower bound and the incumbent of the tree are displayed for some instances, on the left with the BLMO calls accumulated and on the right, the BLMO calls per node.
\cref{fig:SizeASSparseReg,fig:SizeASIntegerPortfolio,fig:SizeASMixedPortfolio} show the evolution in terms of size for the active set, shadow set and the accumulated number of BLMO calls over the node depth.
An example of the effect of the tightening is shown in \cref{fig:TighteningsMixedPortfolio}.
In \cref{fig:BranchingSparseReg,fig:BranchingIntegerPortfolio}, the lower bound progress over the number of nodes and time is displayed for the most fractional branching rule, the partial strong branching rule and the hybrid strong branching rule with a depth of \# integer variables$/20$.

The effects of the initial tolerances for the Frank-Wolfe algorithm are shown in \cref{fig:DualDecaySparseReg} by plotting the total solving time for each pair of initial epsilon and dual gap decay factor. 
As expected, starting with a high tolerance and quickly decreasing it is not beneficial.

\begin{figure}
    \centering
    \begin{subfigure}[t]{1.0\textwidth}
        \centering
        \includegraphics[width=0.7\textwidth]{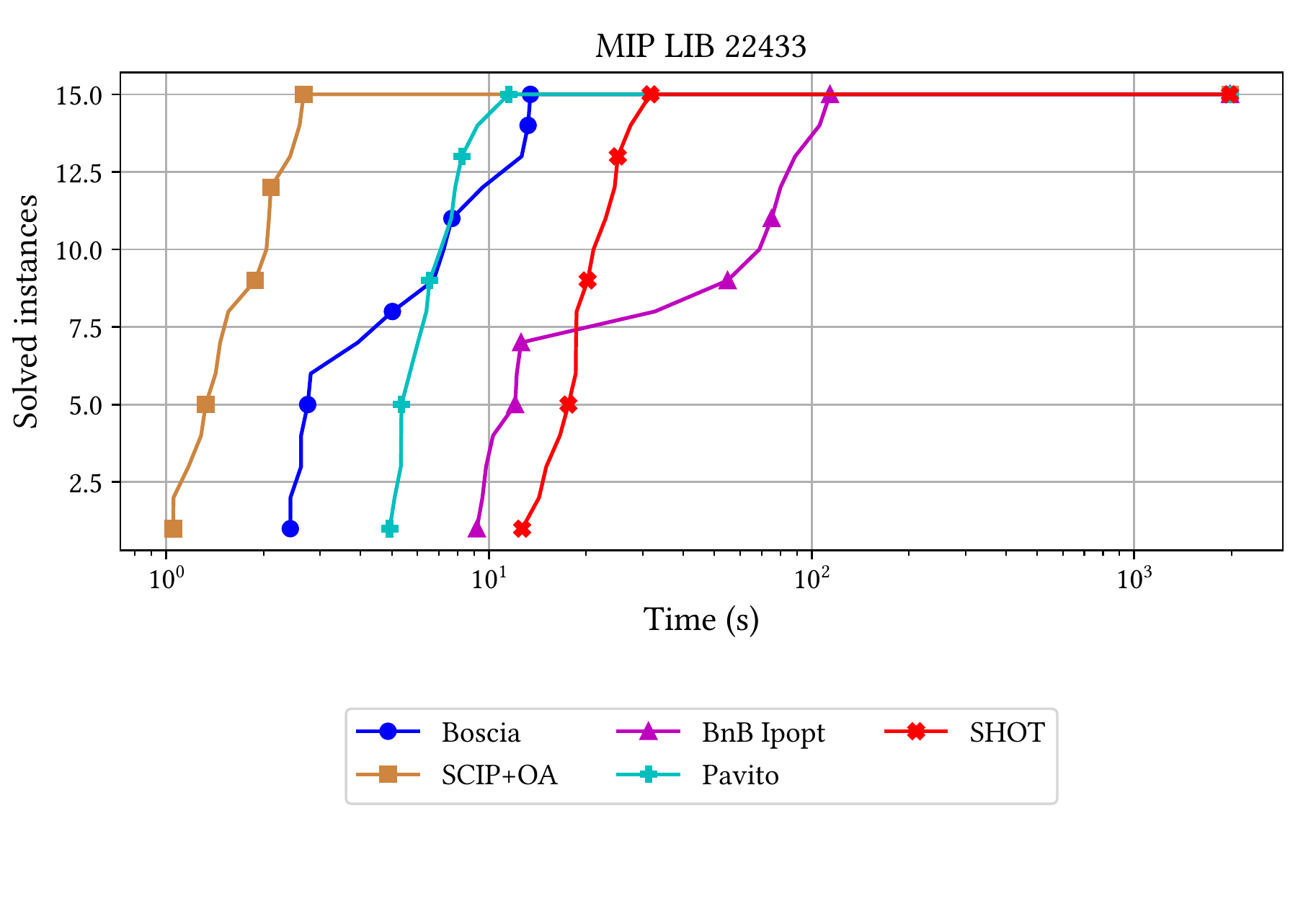}
    \end{subfigure}
    \hfill
    \begin{subfigure}[t]{1.0\textwidth}
        \centering
        \includegraphics[width=0.7\textwidth]{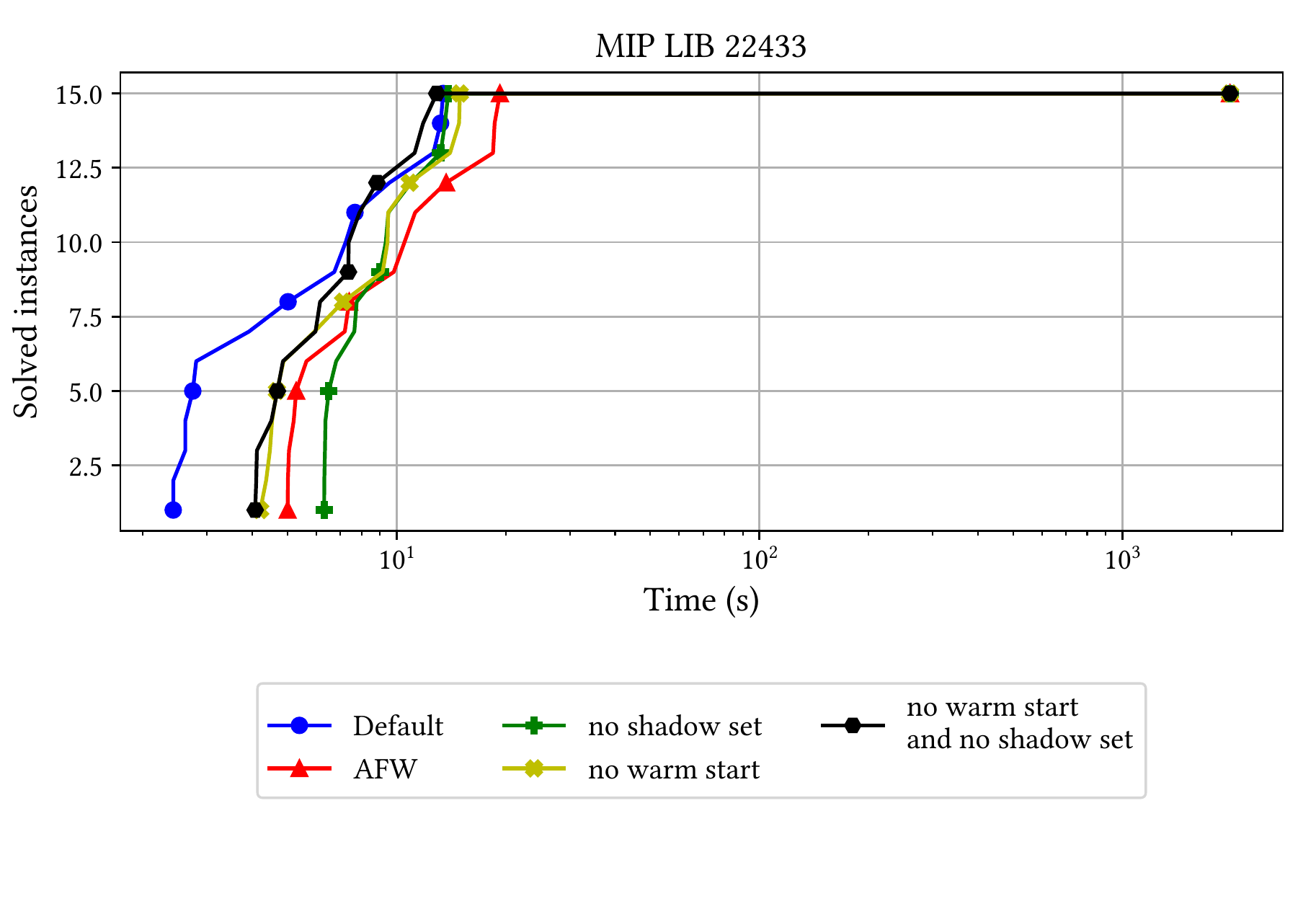}
    \end{subfigure}
    \caption{
    \revision{Comparing the number of terminations over time for the different solver set-ups and \package{} settings for the MIPLIB \texttt{22433} Problem.}
    }
    \label{fig:MIP22433Termination}
\end{figure}

\begin{figure}
    \centering
    \begin{subfigure}[t]{1.0\textwidth}
        \centering
        \includegraphics[width=0.7\textwidth]{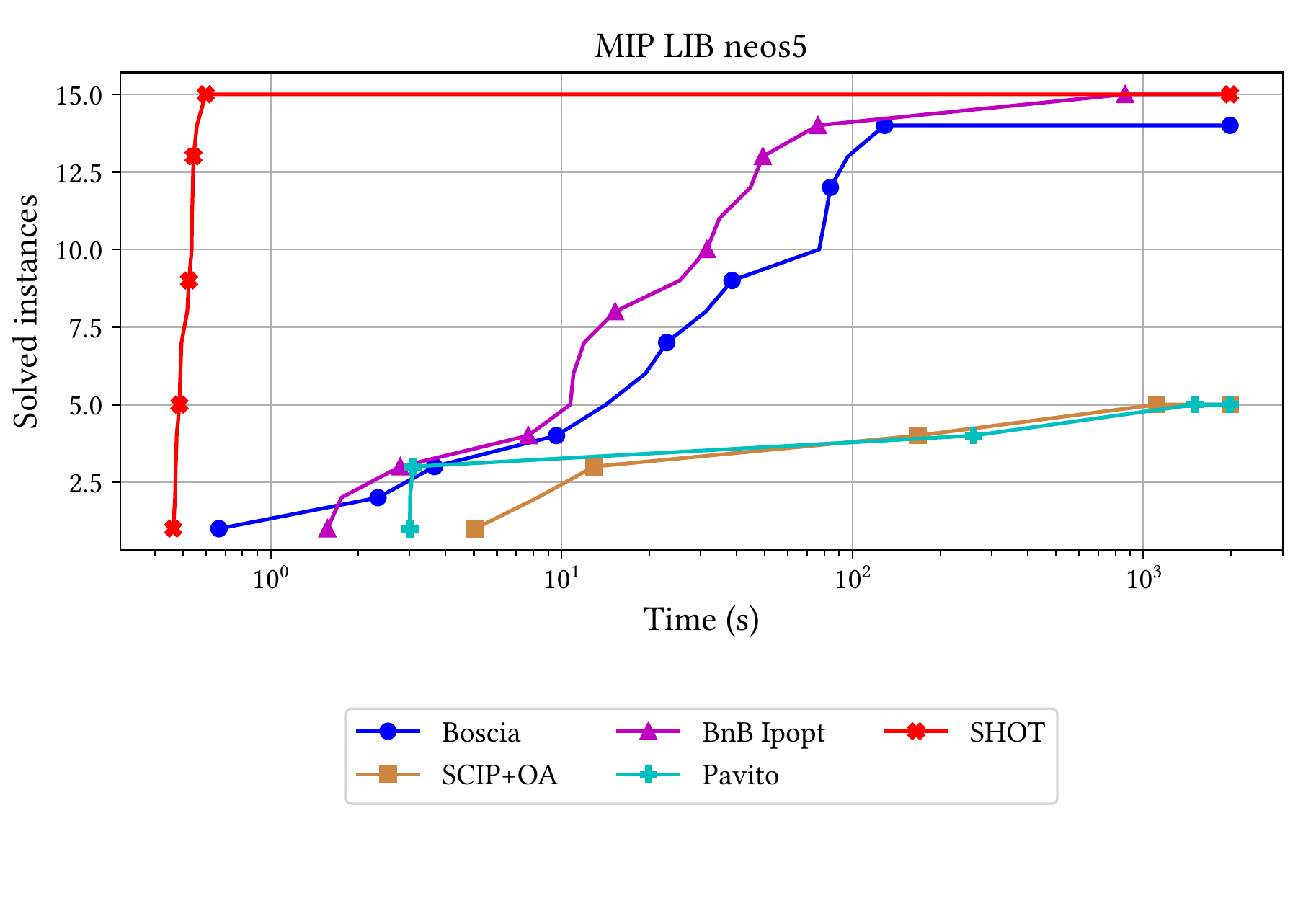}
    \end{subfigure}
    \hfill
    \begin{subfigure}[t]{1.0\textwidth}
        \centering
        \includegraphics[width=0.7\textwidth]{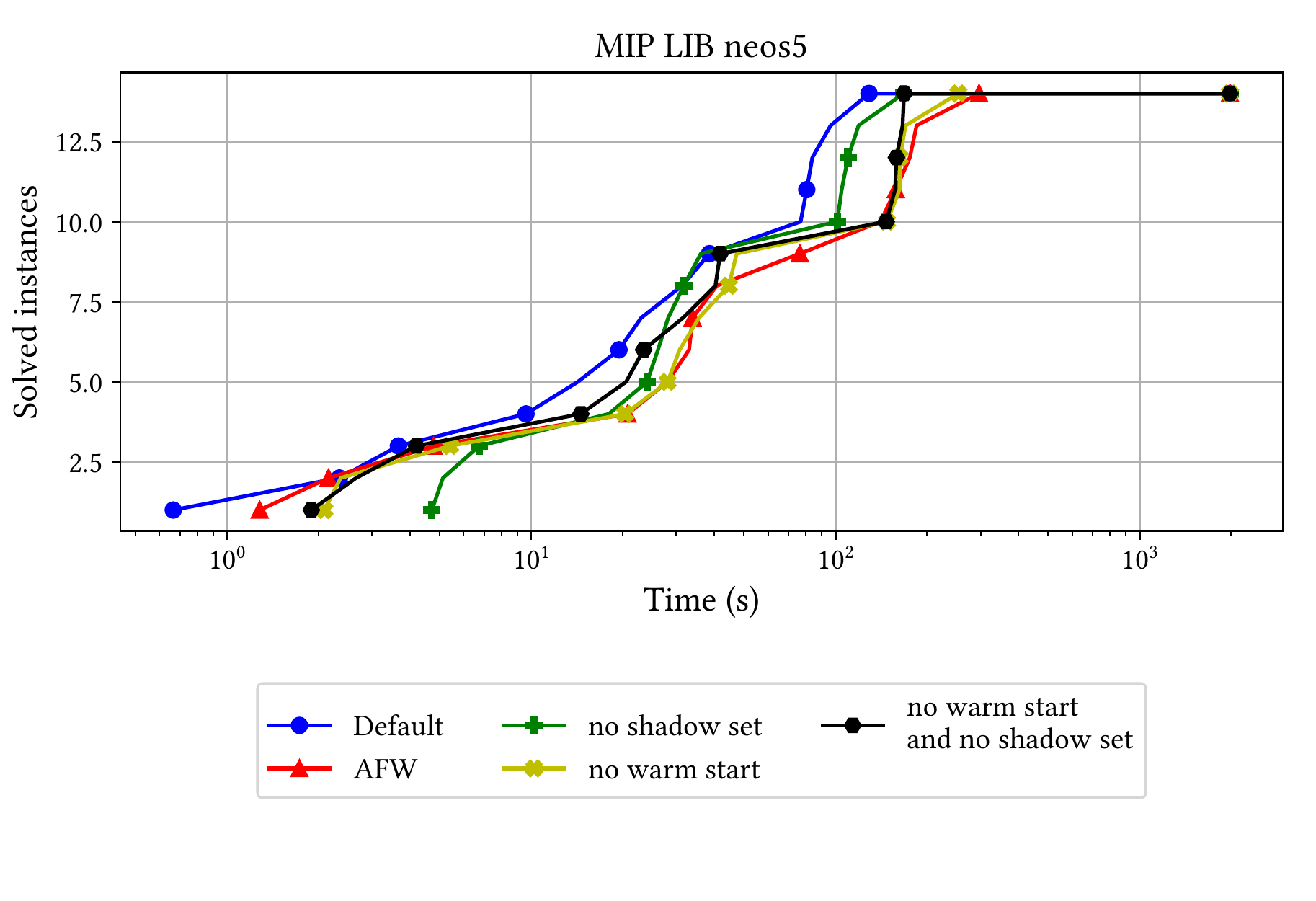}
    \end{subfigure}
    \caption{
    \revision{Comparing the number of terminations over time for the different solver set-ups and \package{} settings for the MIPLIB \texttt{neos5} Problem.}
    }
    \label{fig:MIPneos5Termination}
\end{figure}
    
\begin{figure}
    \centering
    \begin{subfigure}[t]{1.0\textwidth}
        \centering
        \includegraphics[width=0.7\textwidth]{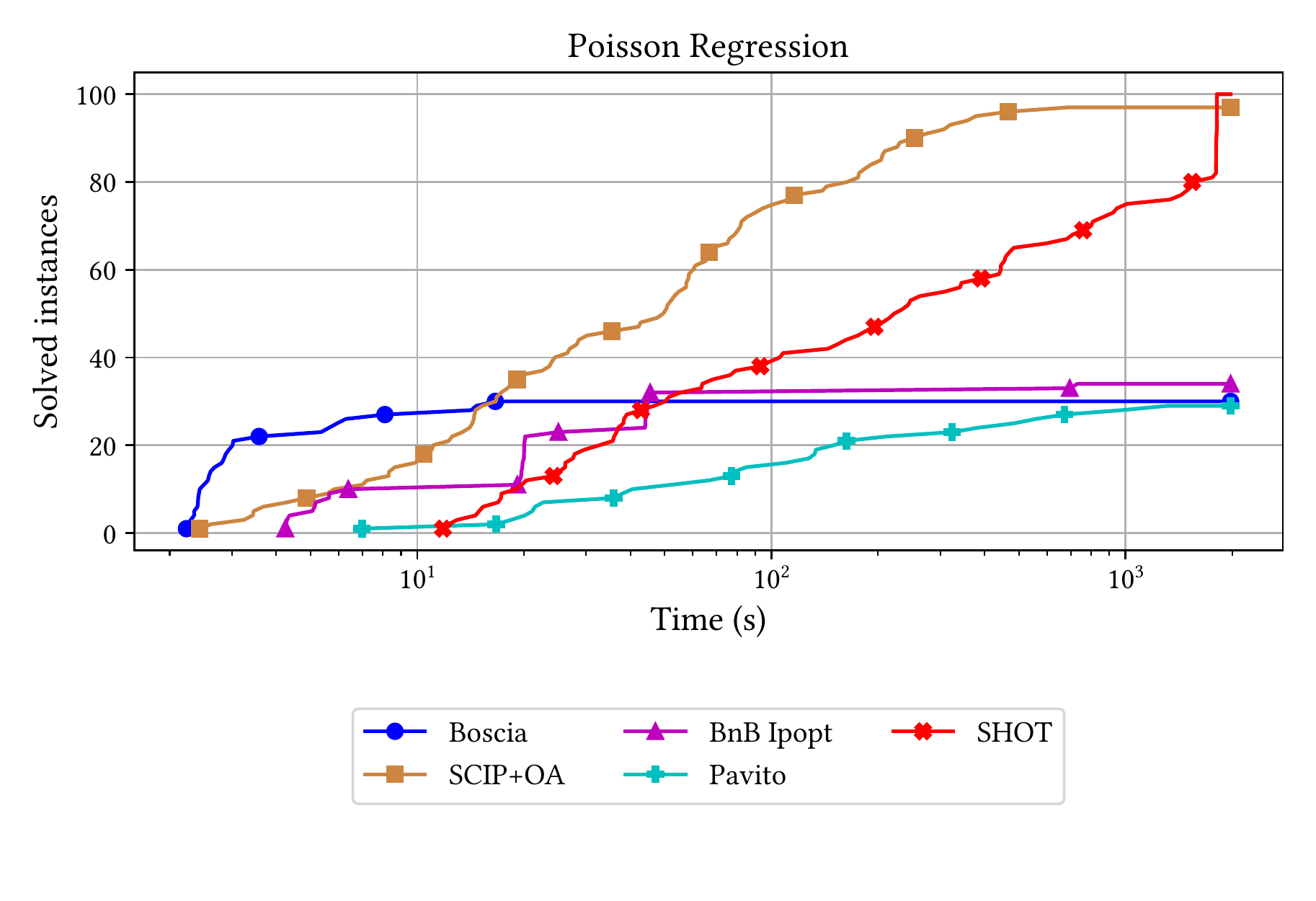}
    \end{subfigure}
    \hfill
    \begin{subfigure}[t]{1.0\textwidth}
        \centering
        \includegraphics[width=0.7\textwidth]{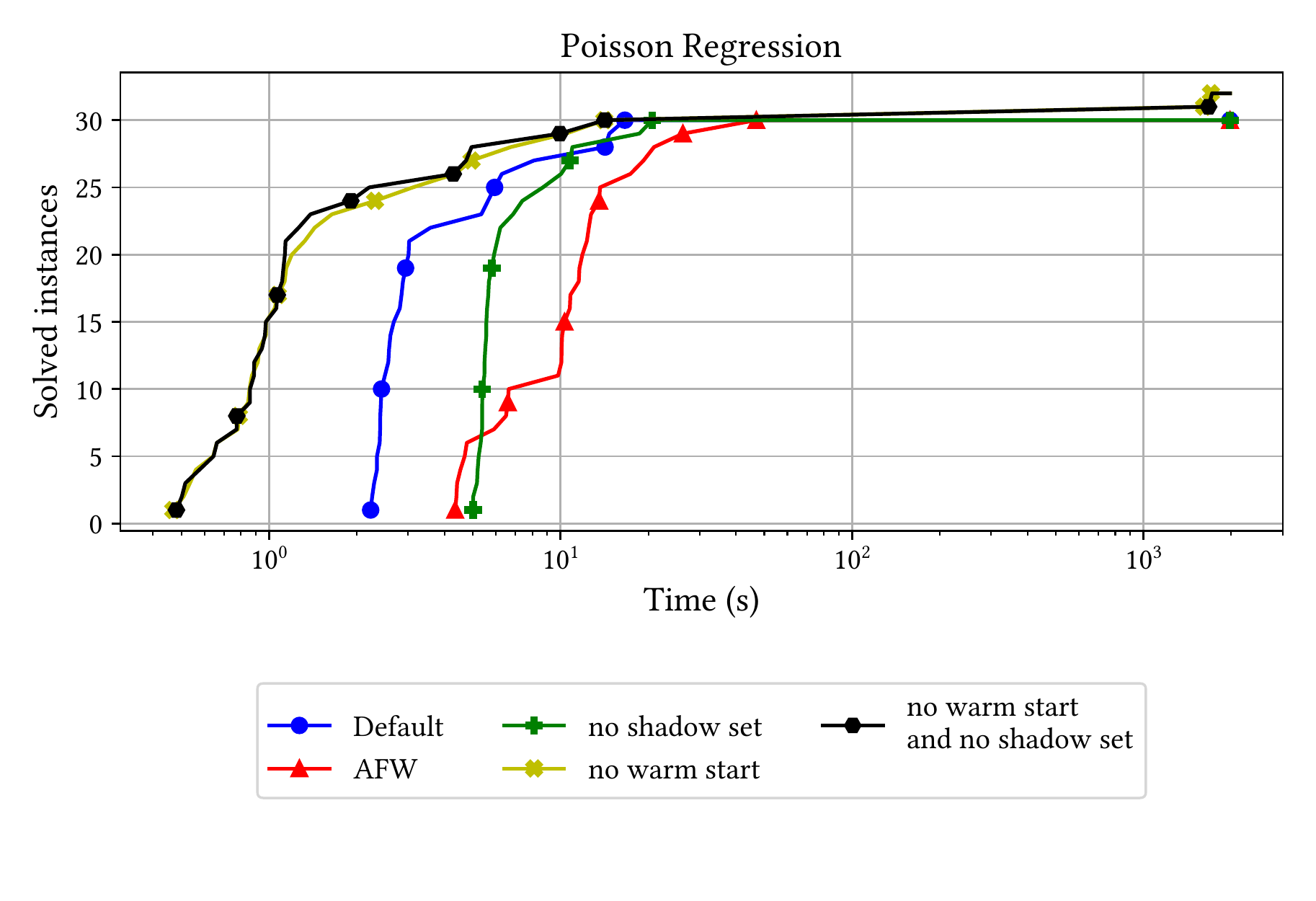}
    \end{subfigure}
    \caption{
    \revision{Comparing the number of terminations over time for the different solver set-ups and \package{} settings for the Poisson Regression Problem.}
    }
    \label{fig:PoissonTermination}
\end{figure}

\begin{figure}
    \centering
    \begin{subfigure}[t]{1.0\textwidth}
        \centering
        \includegraphics[width=0.7\textwidth]{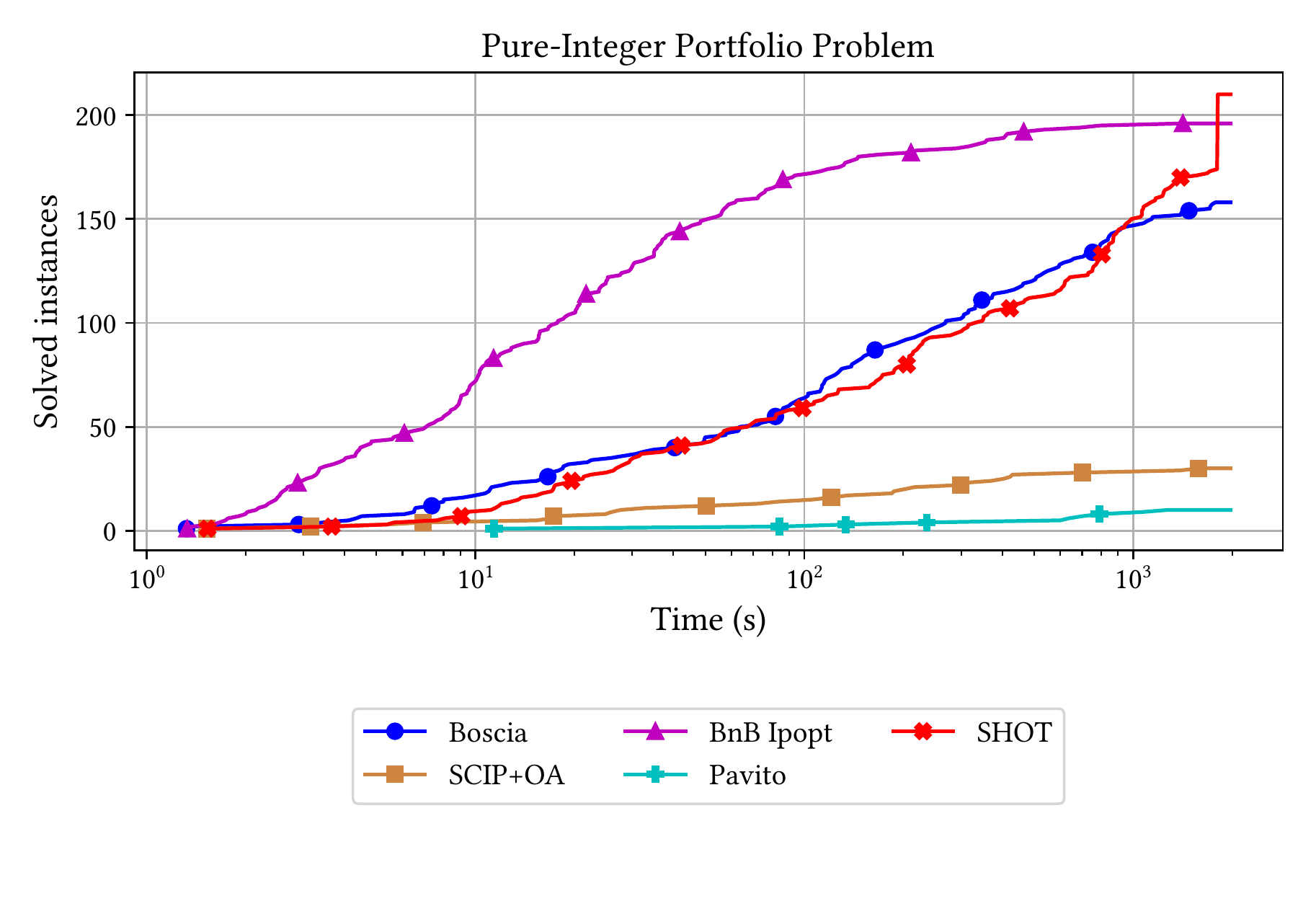}
    \end{subfigure}
    \hfill
    \begin{subfigure}[t]{1.0\textwidth}
        \centering
        \includegraphics[width=0.7\textwidth]{new_images/portfolio_integer_boscia_settings.pdf}
    \end{subfigure}
    \caption{
    \revision{Comparing the number of terminations over time for the different solver set-ups and \package{} settings for the Pure Integer Portfolio Problem.}
    }
    \label{fig:IntegerPortfolioTermination}
\end{figure}

\begin{figure}
    \centering
    \begin{subfigure}[t]{1.0\textwidth}
        \centering
        \includegraphics[width=0.7\textwidth]{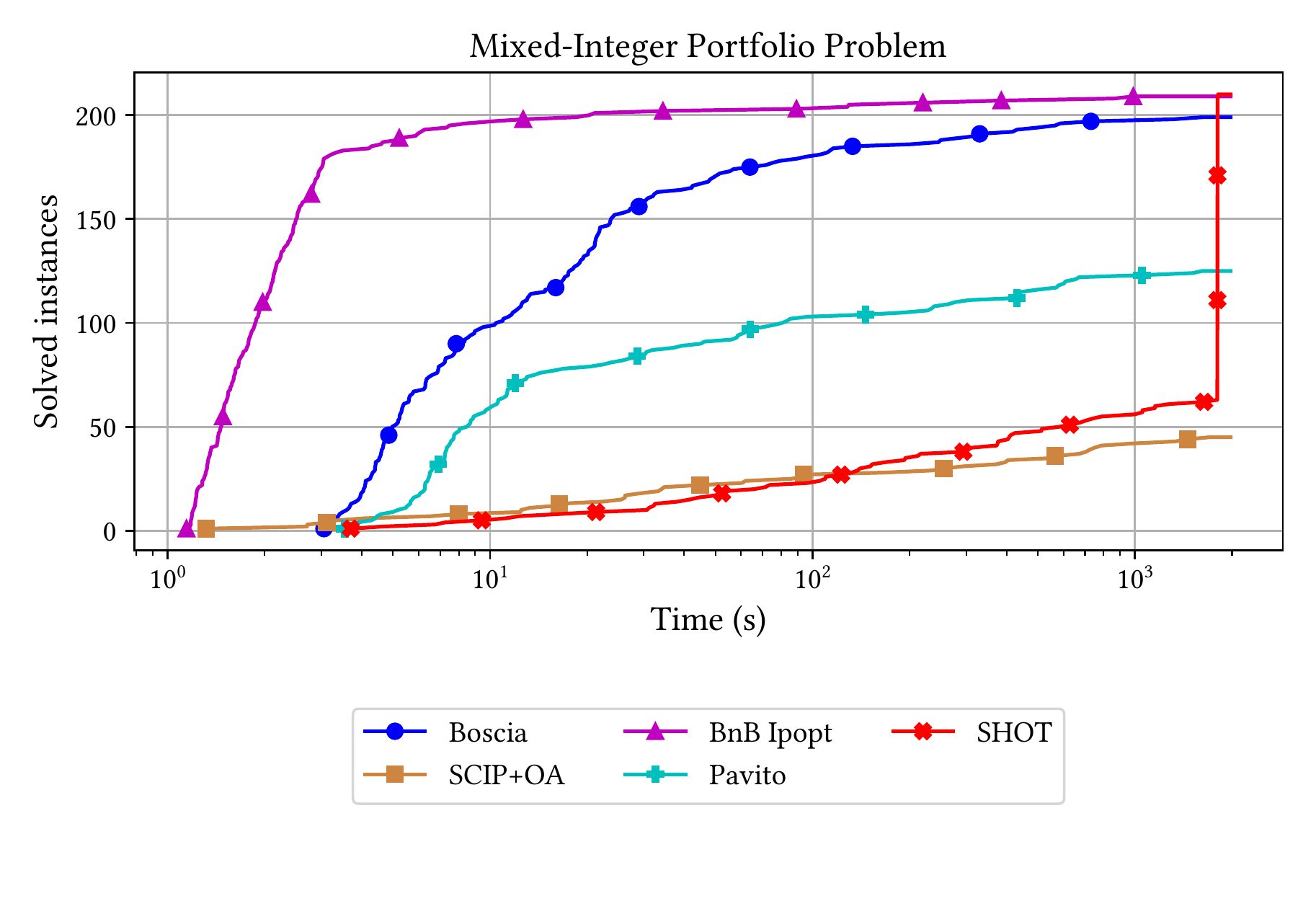}
    \end{subfigure}
    \hfill
    \begin{subfigure}[t]{1.0\textwidth}
        \centering
        \includegraphics[width=0.7\textwidth]{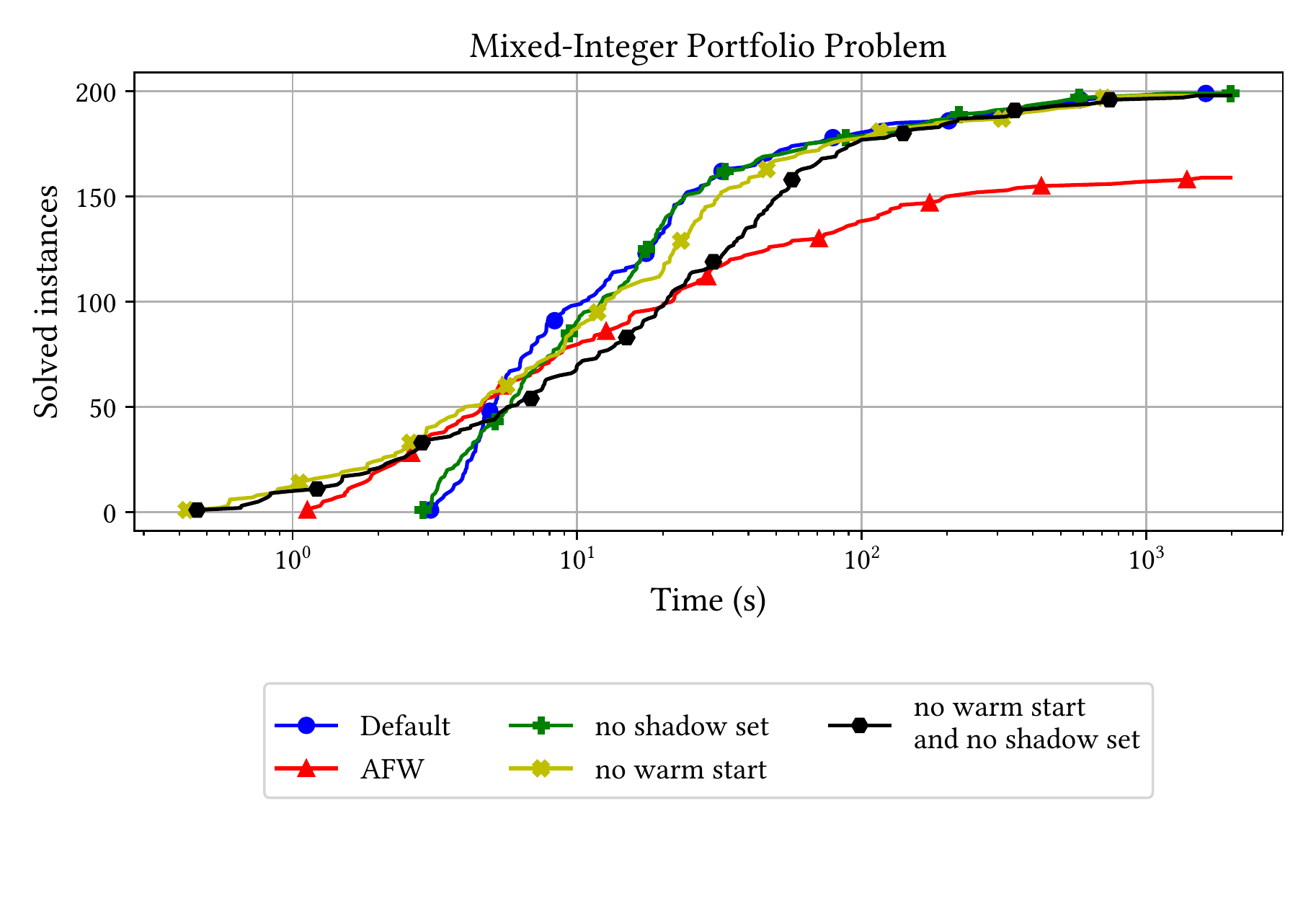}
    \end{subfigure}
    \caption{
    \revision{Comparing the number of terminations over time for the different solver set-ups and \package{} settings for the Mixed Integer Portfolio Problem.}
    }
    \label{fig:MixedPortfolioTermination}
\end{figure}

\begin{figure}
    \centering
    \begin{subfigure}[t]{1.0\textwidth}
        \centering
        \includegraphics[width=0.7\textwidth]{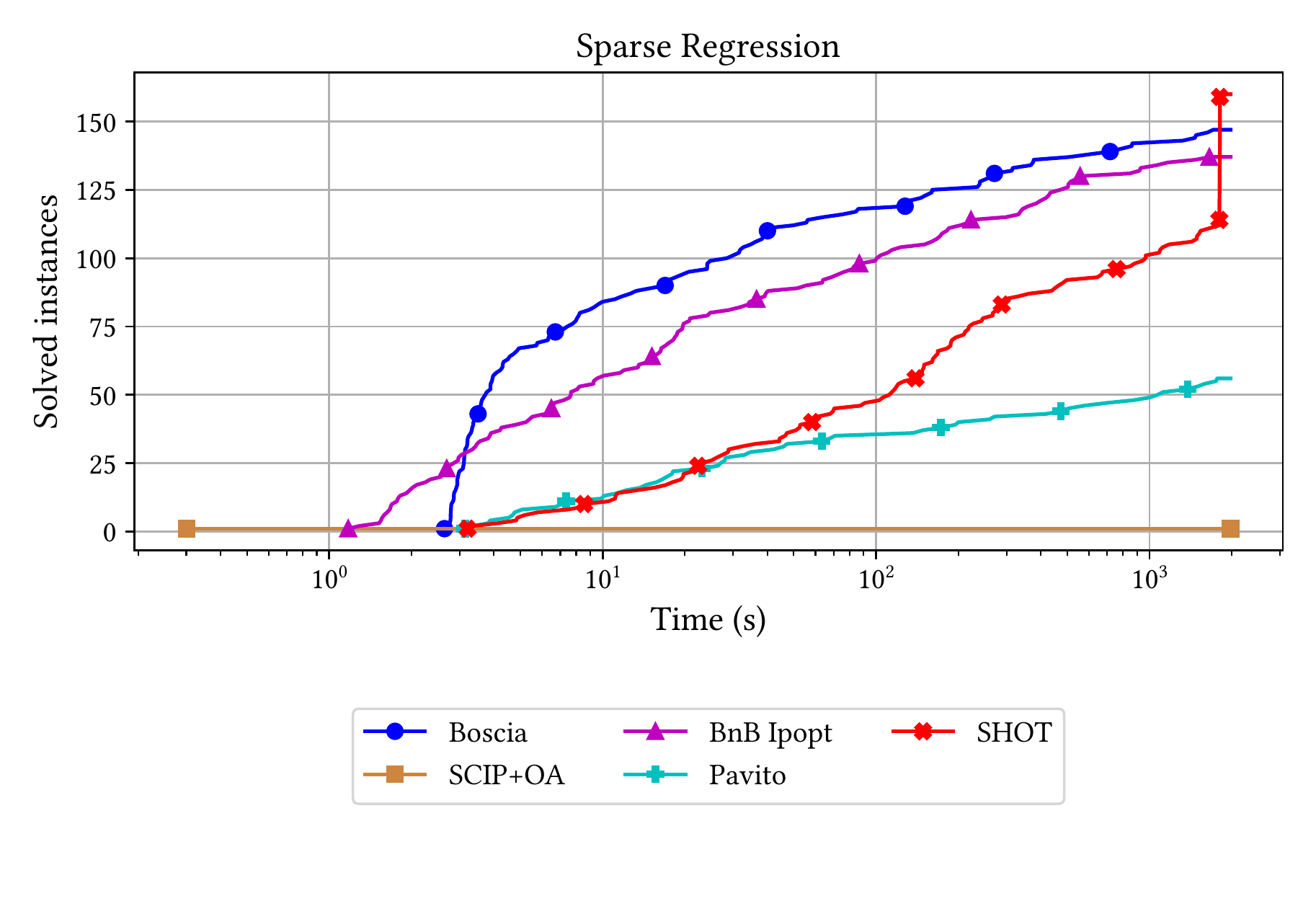}
    \end{subfigure}
    \hfill
    \begin{subfigure}[t]{1.0\textwidth}
        \centering
        \includegraphics[width=0.7\textwidth]{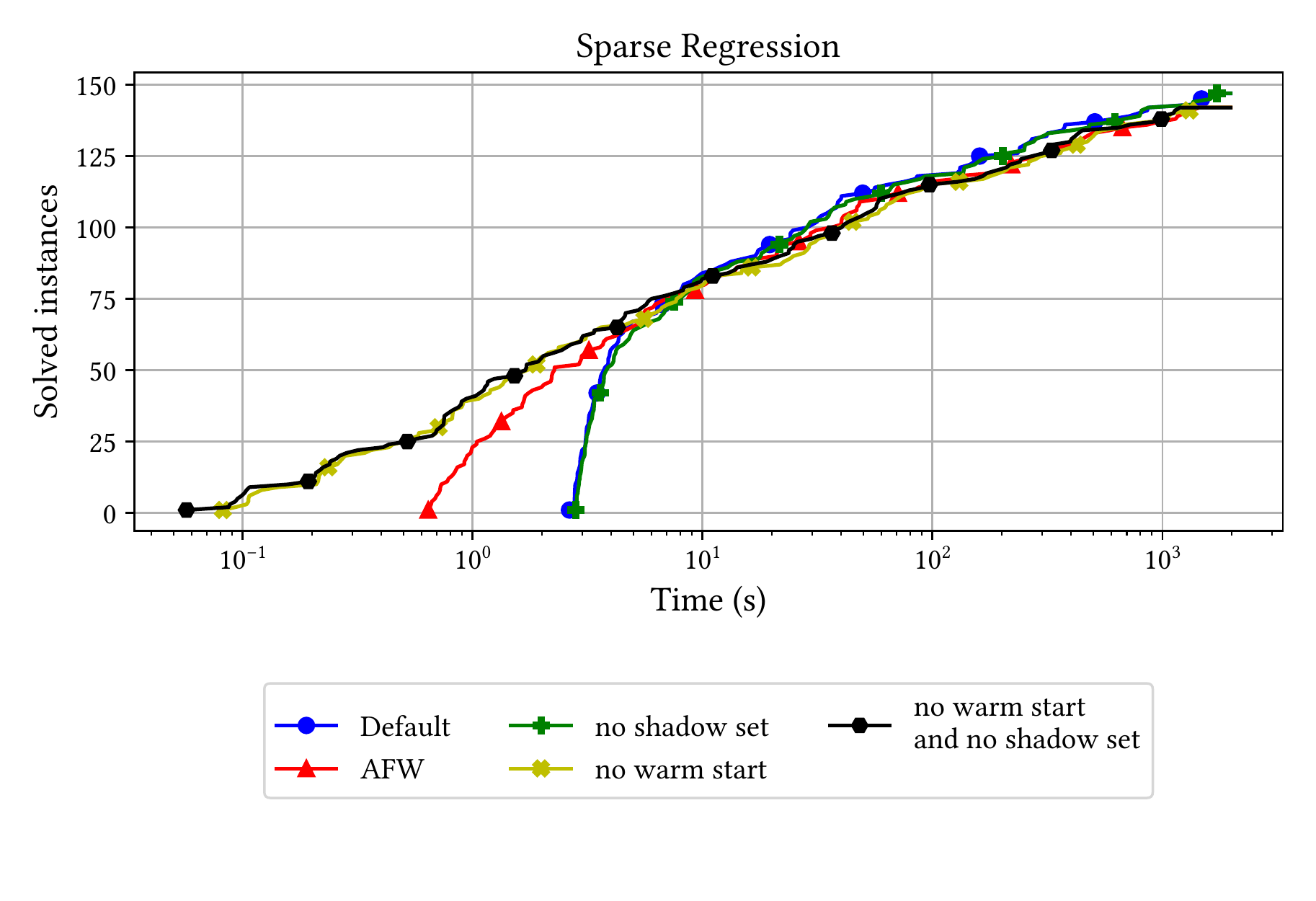}
    \end{subfigure}
    \caption{
    \revision{Comparing the number of terminations over time for the different solver set-ups and \package{} settings for the Sparse Regression Problem.}
    }
    \label{fig:SparseRegTermination}
\end{figure}

\begin{figure}
    \centering
    \begin{subfigure}[t]{1.0\textwidth}
        \centering
        \includegraphics[width=0.7\textwidth]{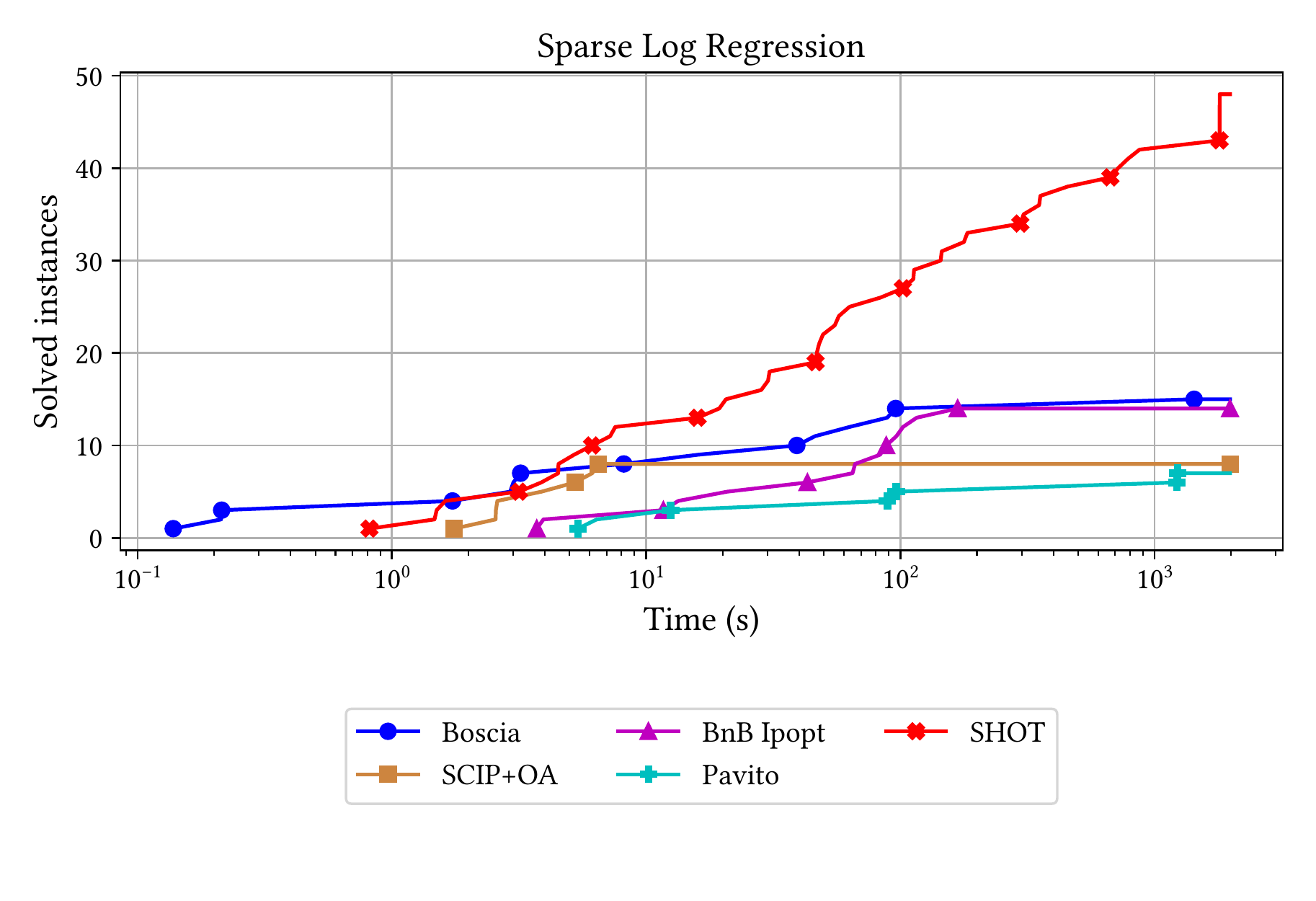}
    \end{subfigure}
    \hfill
    \begin{subfigure}[t]{1.0\textwidth}
        \centering
        \includegraphics[width=0.7\textwidth]{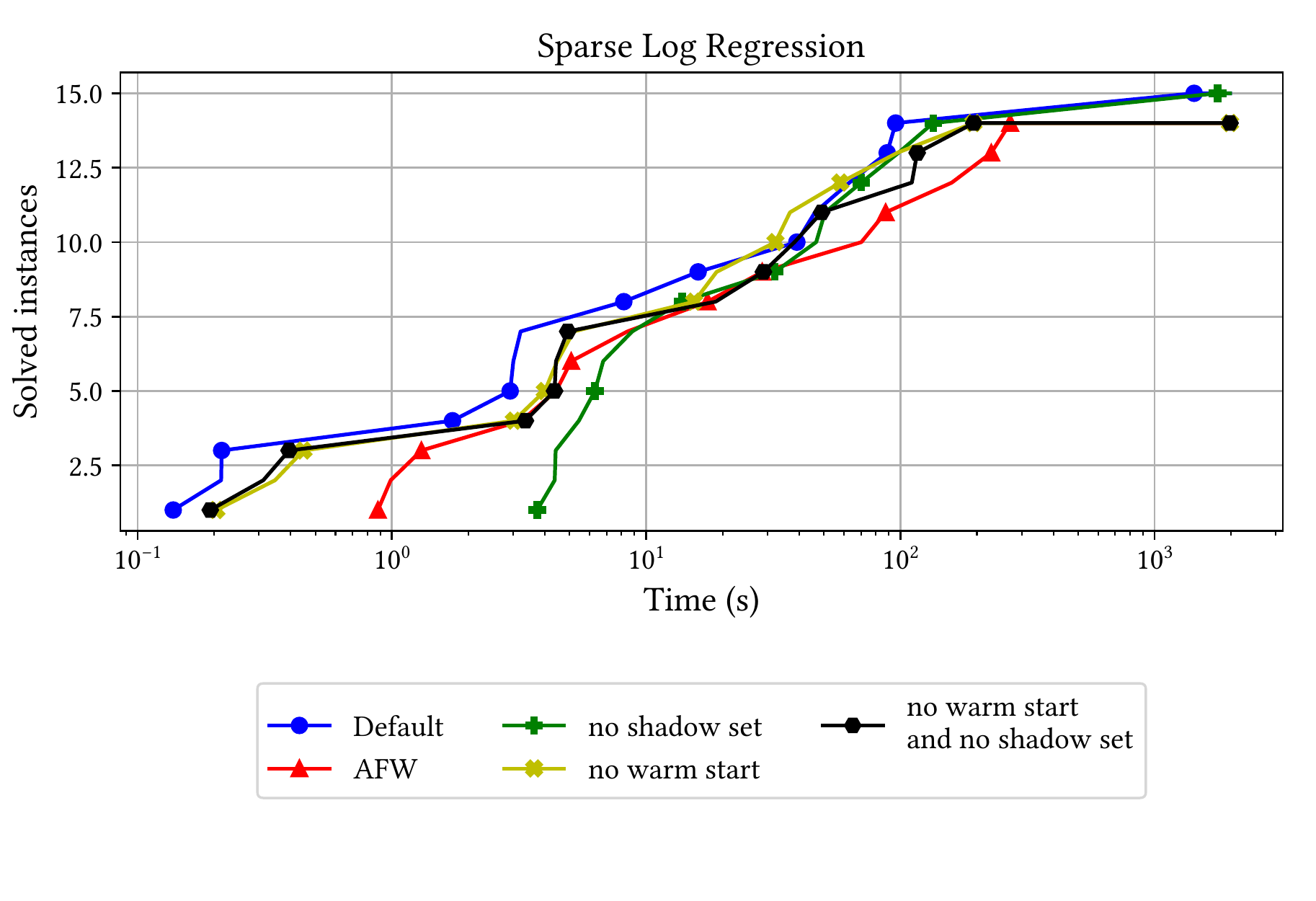}
    \end{subfigure}
    \caption{
    \revision{Comparing the number of terminations over time for the different solver set-ups and \package{} settings for the Sparse Log Regression Problem.}
    }
    \label{fig:SparseLogRegTermination}
\end{figure}

\begin{figure}
    \centering
    \begin{subfigure}[t]{1.0\textwidth}
        \centering
        \includegraphics[width=0.7\textwidth]{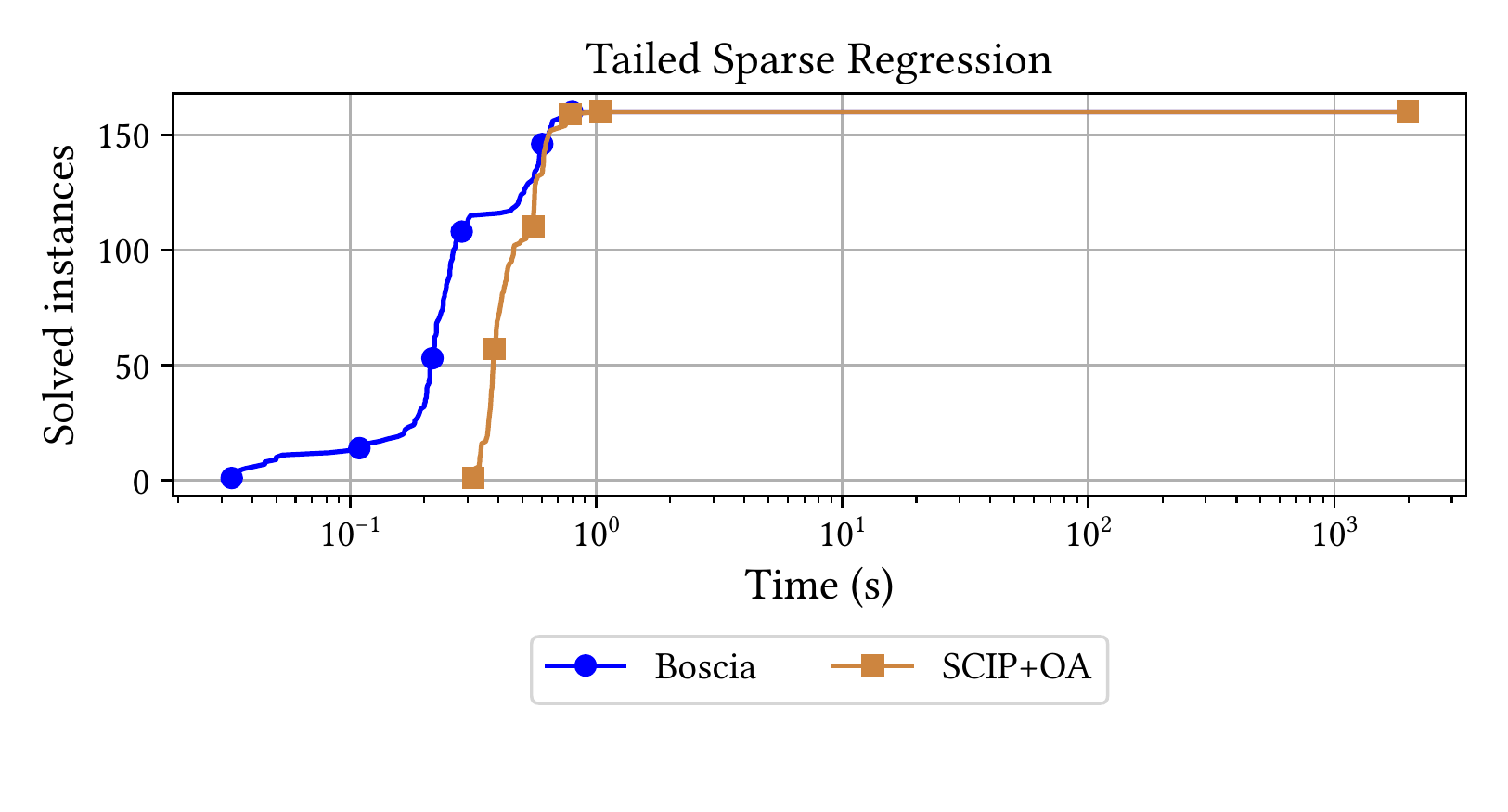}
    \end{subfigure}
    \hfill
    \begin{subfigure}[t]{1.0\textwidth}
        \centering
        \includegraphics[width=0.7\textwidth]{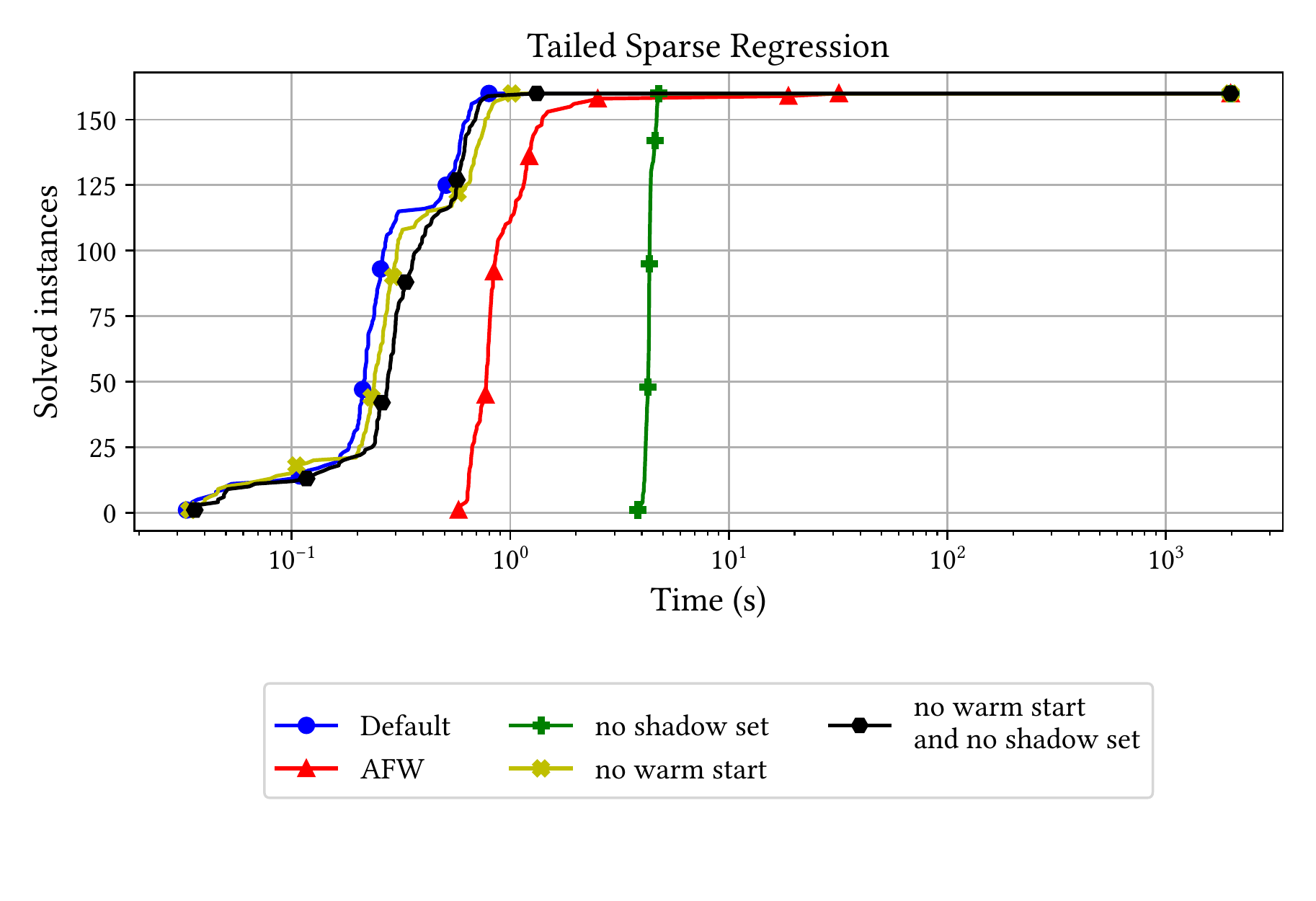}
    \end{subfigure}
    \caption{
    \revision{Comparing the number of terminations over time for the different solver set-ups and \package{} settings for the Tailed Sparse Regression Problem.}
    }
    \label{fig:TailedTermination}
\end{figure}

\begin{figure}
    \centering
    \begin{subfigure}[t]{1.0\textwidth}
        \centering
        \includegraphics[width=0.7\textwidth]{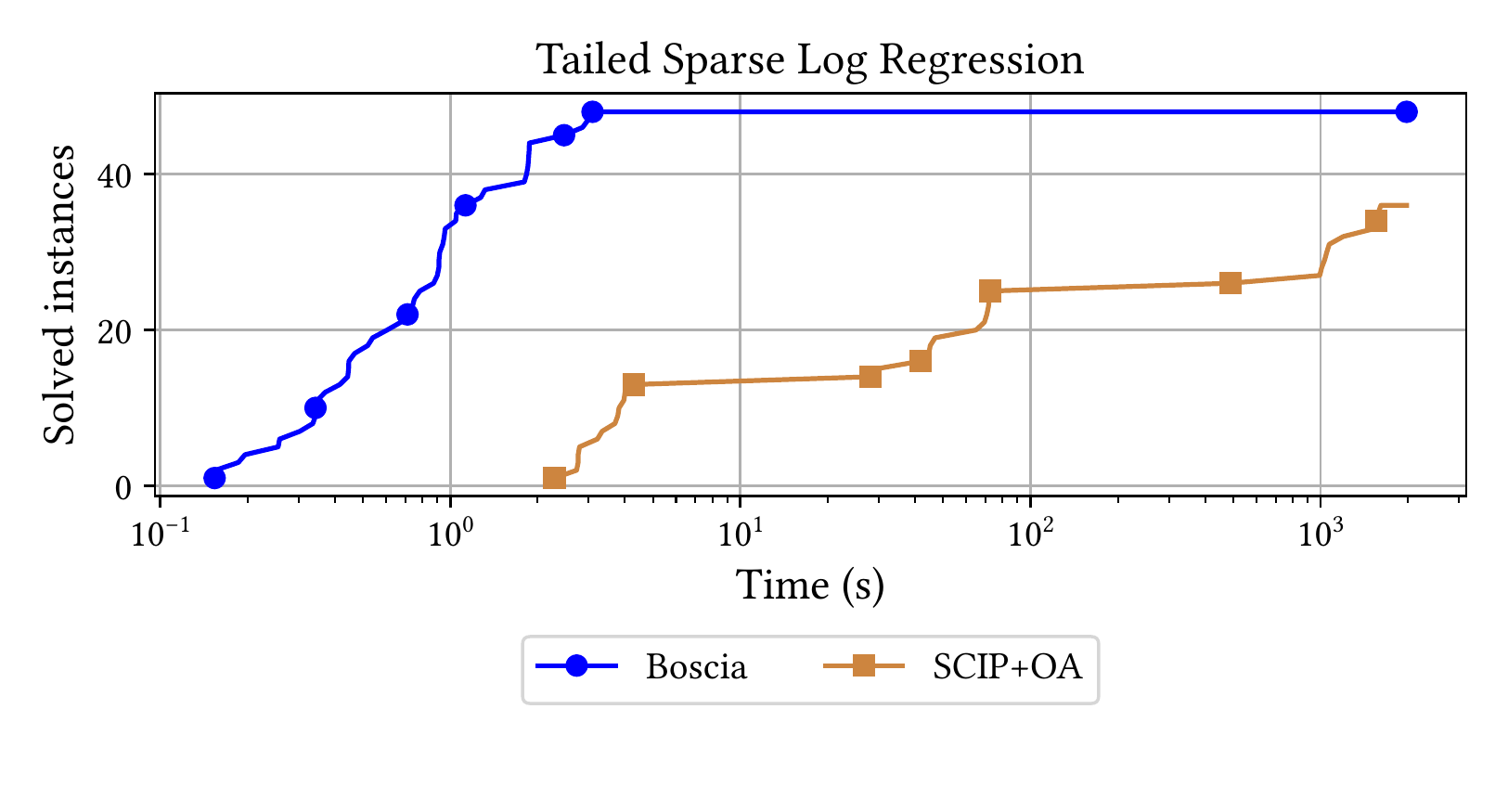}
    \end{subfigure}
    \hfill
    \begin{subfigure}[t]{1.0\textwidth}
        \centering
        \includegraphics[width=0.7\textwidth]{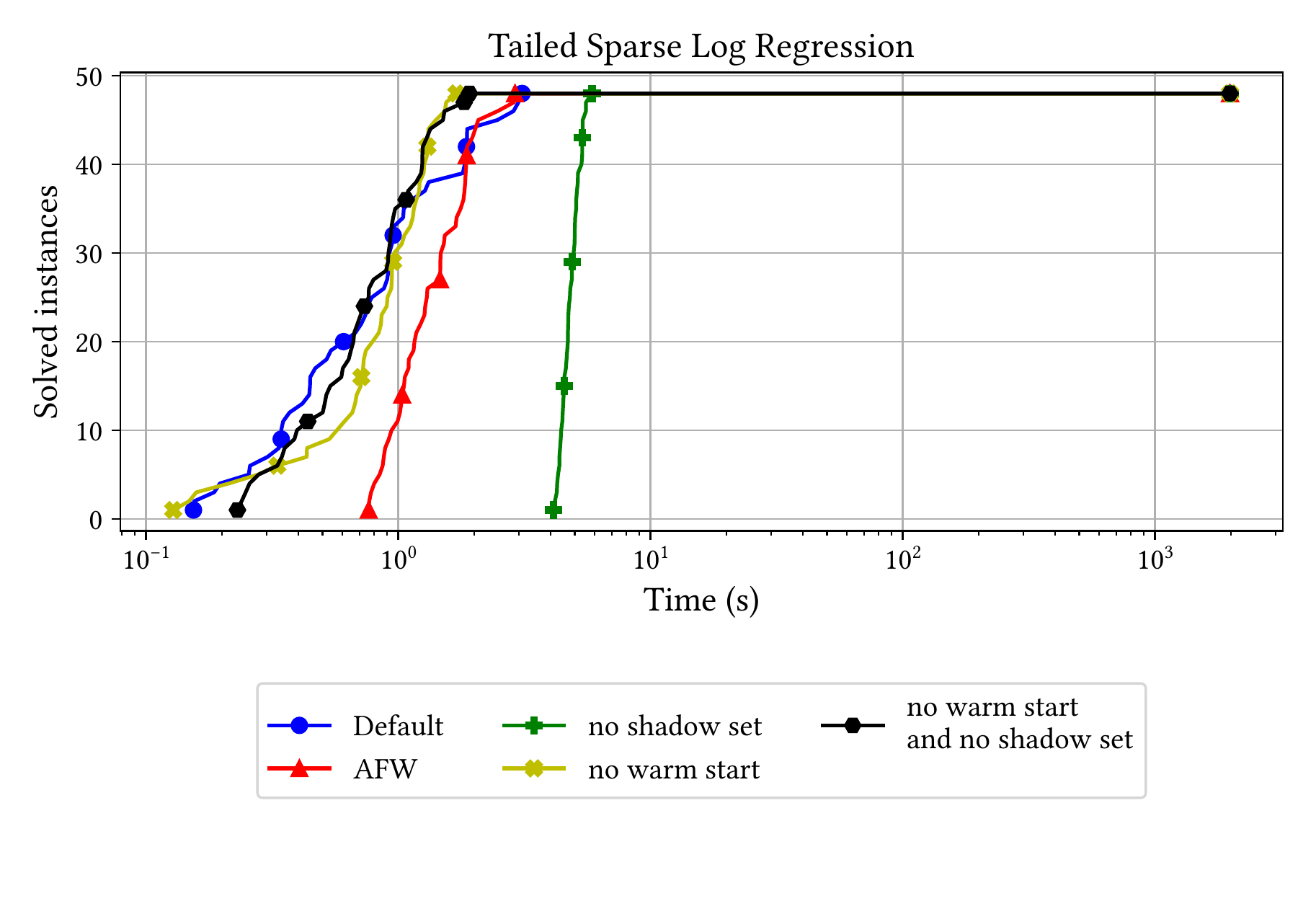}
    \end{subfigure}
    \caption{
    \revision{Comparing the number of terminations over time for the different solver set-ups and \package{} settings for the Tailed Sparse Log Regression Problem.}
    }
    \label{fig:TailedLogTermination}
\end{figure}

\begin{figure}
    \centering
    \begin{subfigure}[t]{0.49\textwidth}
        \centering
        \includegraphics[width=0.90\textwidth]{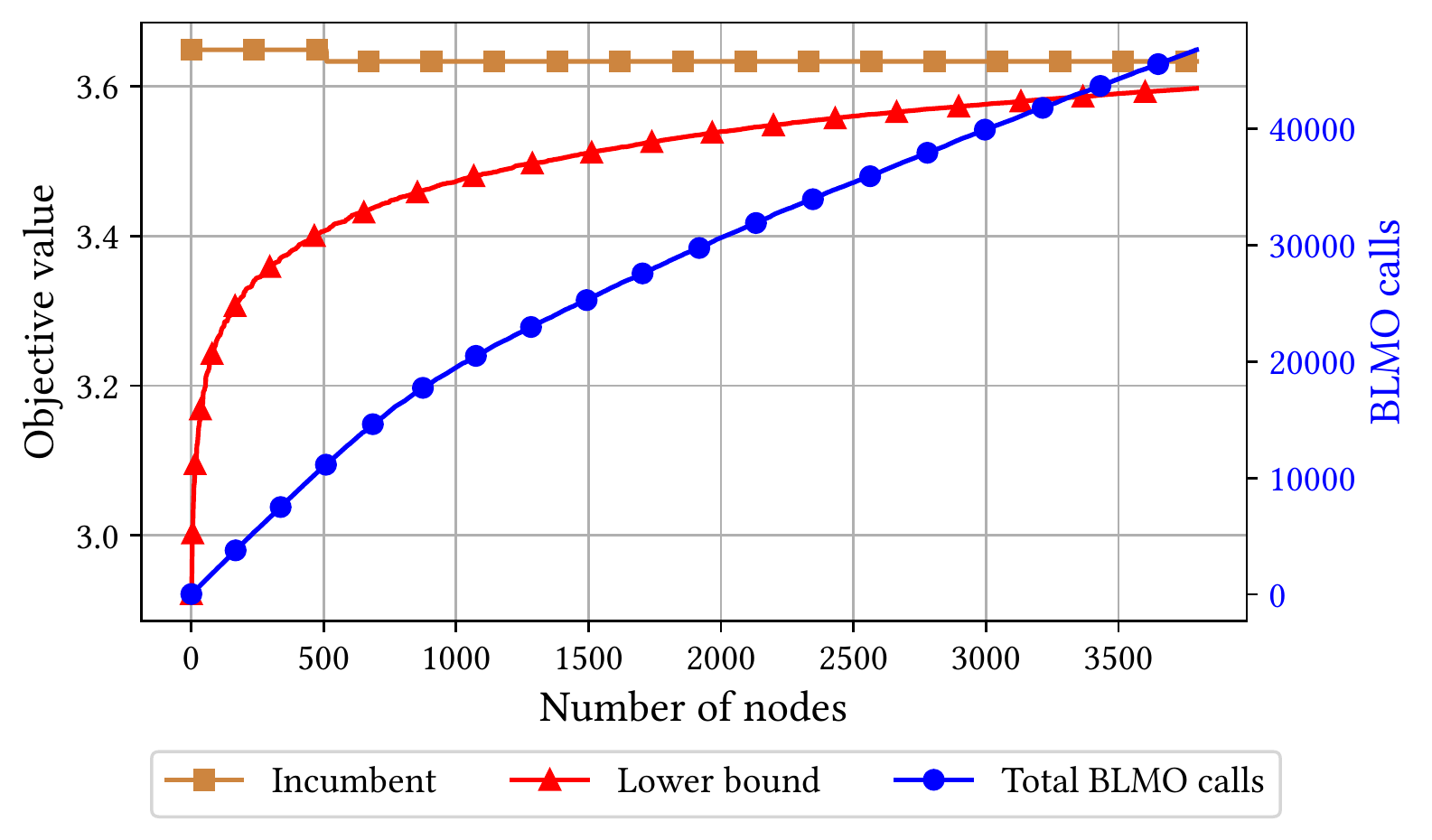}
        \caption{Number of integers variables $=95$}
    \end{subfigure}
    \hfill
    \begin{subfigure}[t]{0.49\textwidth}
        \centering
        \includegraphics[width=0.90\textwidth]{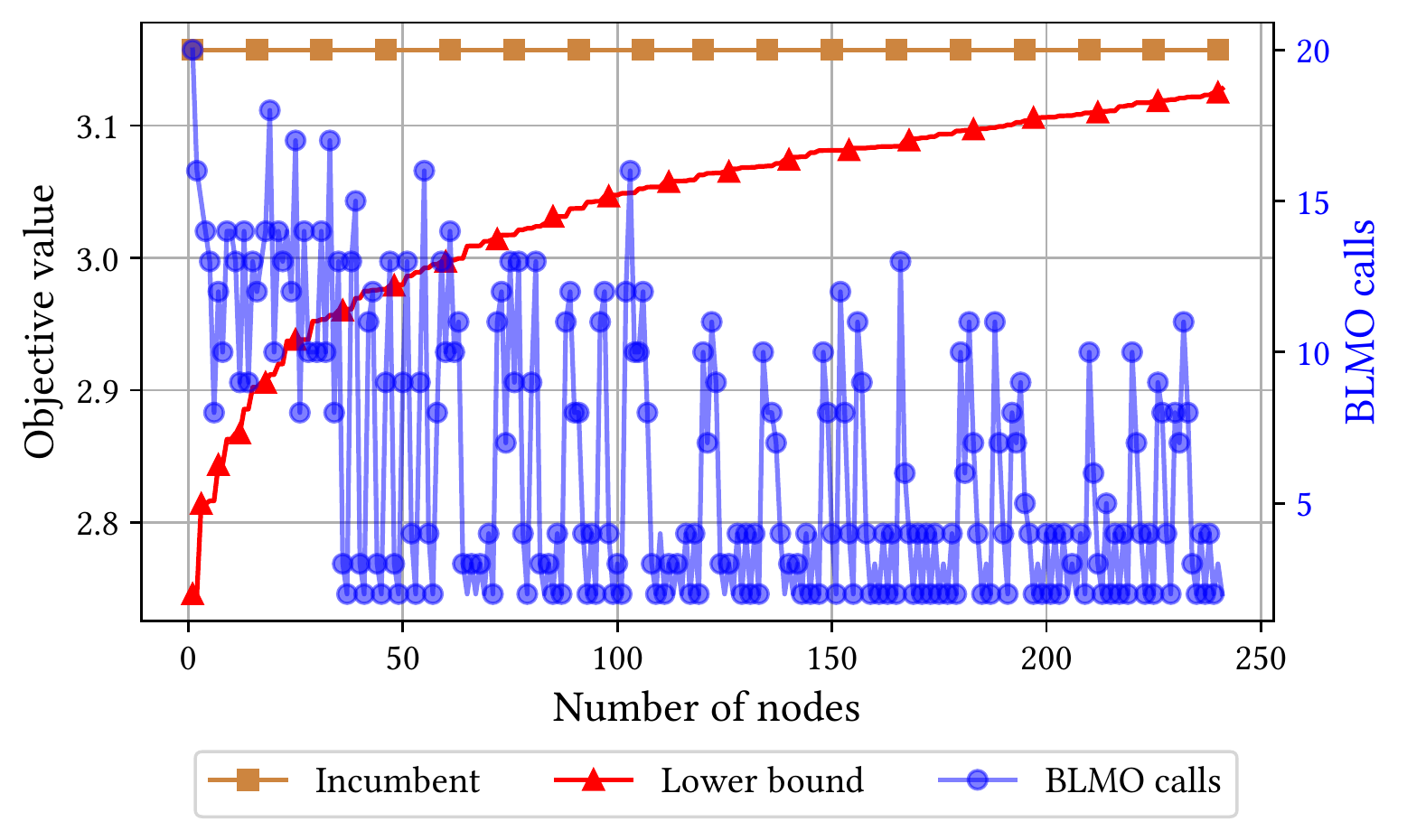}
        \caption{Number of integers variables $=75$}
    \end{subfigure}
    \caption{
    \revision{Progress plot of upper and lower bound for instances of the Sparse Regression Problem. On the left with the accumulated number of BLMO calls, and on the right with the BLMO calls per node.}
    }
    \label{fig:ProgressSparseReg}
\end{figure}

\begin{figure}
    \centering
    \begin{subfigure}[t]{0.49\textwidth}
        \centering
        \includegraphics[width=0.90\textwidth]{new_images/dual_gap_default_35_10_integer_portfolio.pdf}
        \caption{Number of integers variables $=35$}
    \end{subfigure}
    \hfill
    \begin{subfigure}[t]{0.49\textwidth}
        \centering
        \includegraphics[width=0.90\textwidth]{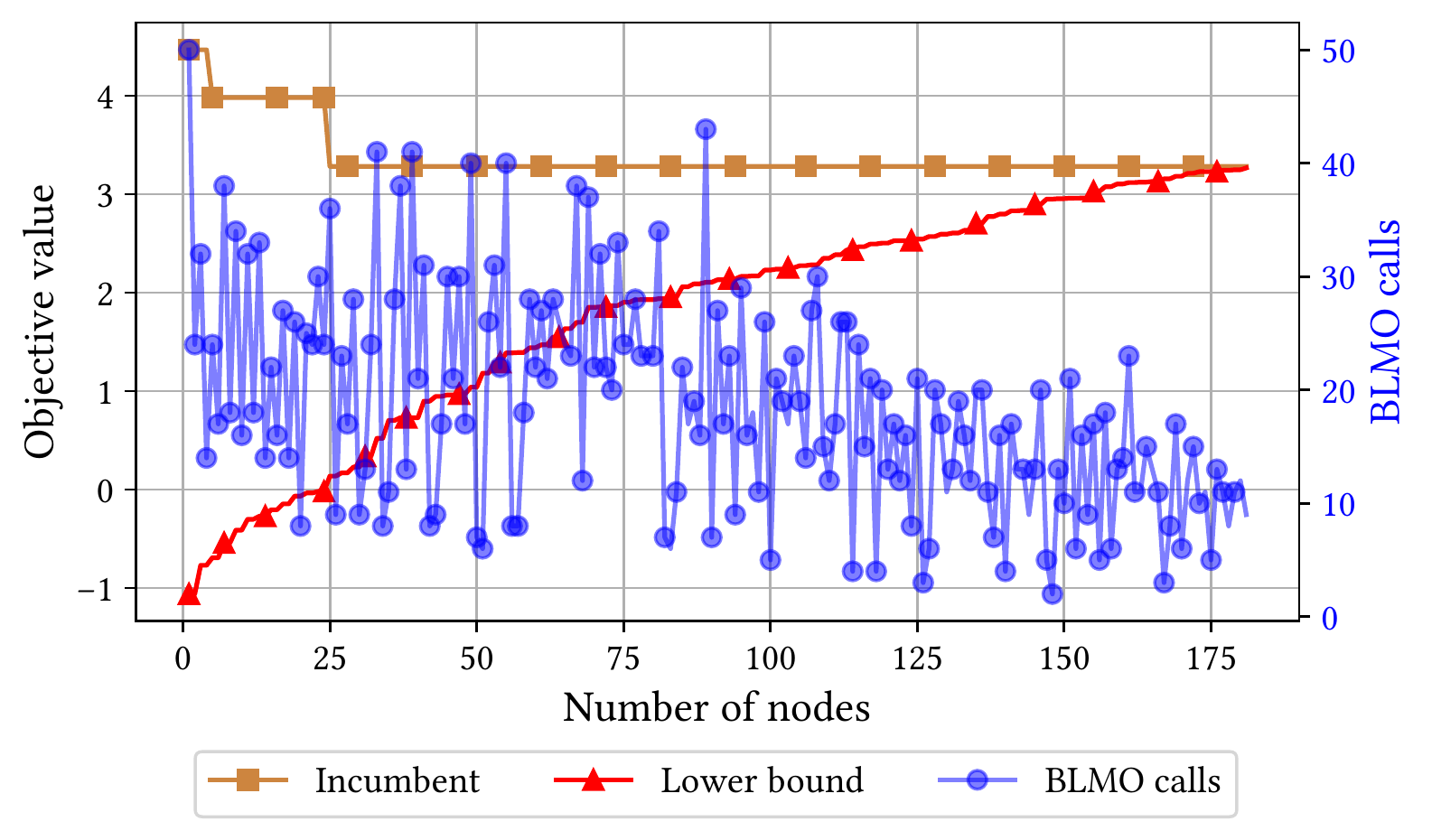}
        \caption{Number of integers variables $=20$}
    \end{subfigure}
    \caption{
    \revision{Progress plot of upper and lower bound for instances of the Pure Integer Portfolio Problem. On the left with the accumulated number of BLMO calls, and on the right with the BLMO calls per node.}
    }
    \label{fig:ProgressIntegerPortfolio}
\end{figure}

\begin{figure}
    \centering
    \begin{subfigure}[t]{0.49\textwidth}
        \centering
        \includegraphics[width=0.90\textwidth]{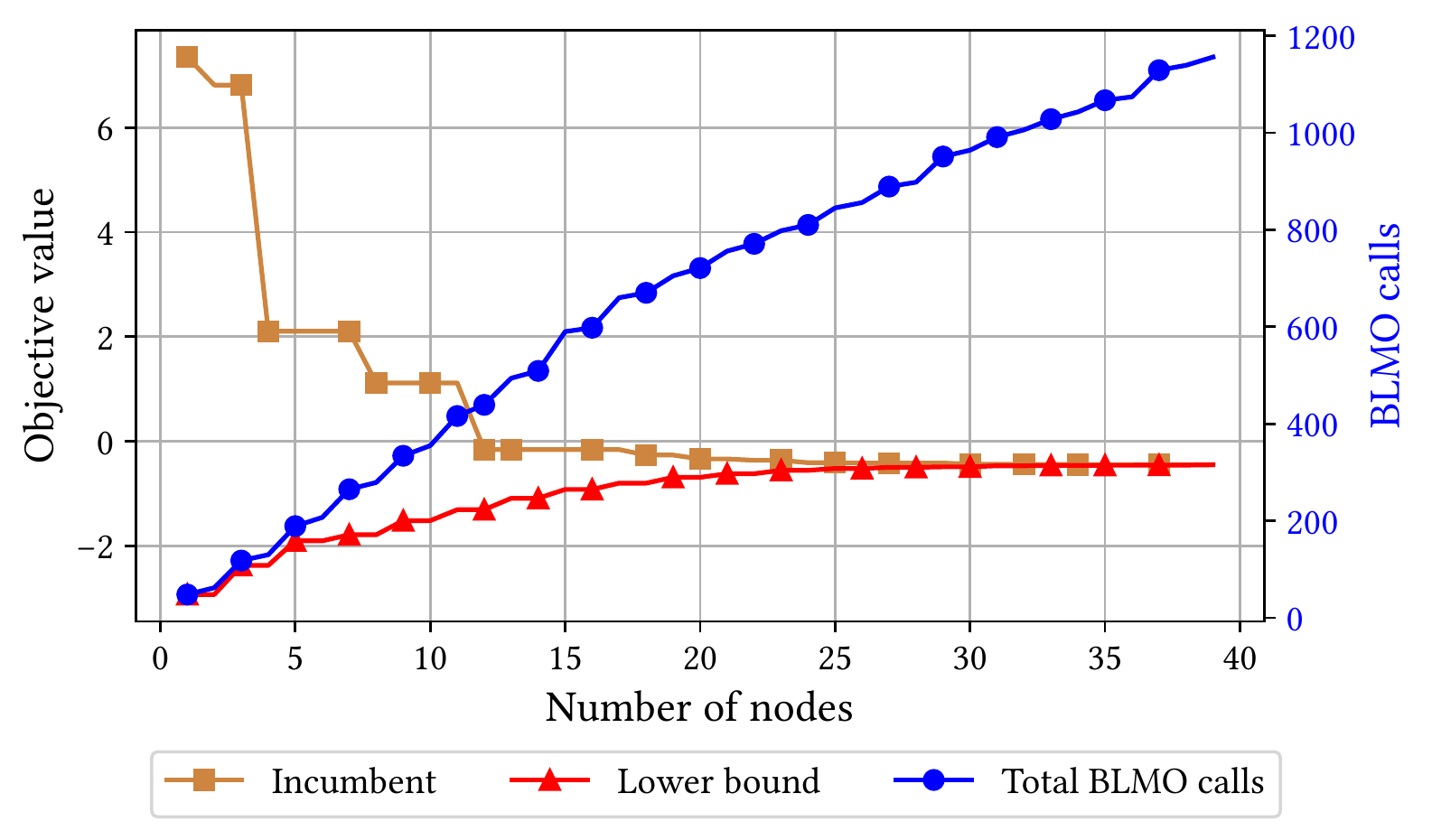}
        \caption{Number of integers variables $=22$}
    \end{subfigure}
    \hfill
    \begin{subfigure}[t]{0.49\textwidth}
        \centering
        \includegraphics[width=0.90\textwidth]{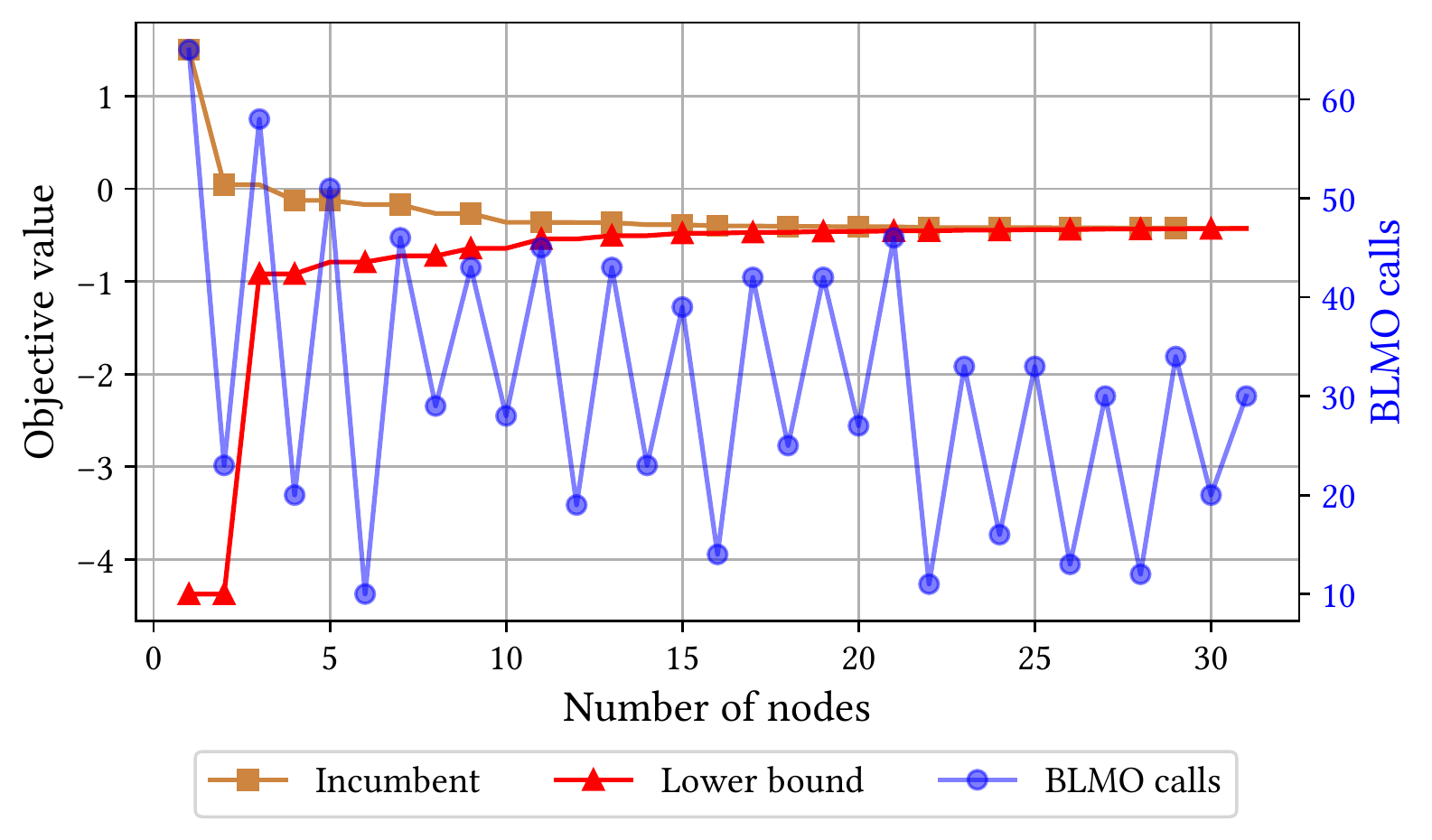}
        \caption{Number of integers variables $=27$}
    \end{subfigure}
    \caption{
    \revision{Progress plot of upper and lower bound for instances of the Mixed Integer Portfolio Problem. On the left with the accumulated number of BLMO calls, and on the right with the BLMO calls per node.}
    }
    \label{fig:ProgressMixedPortfolio}
\end{figure}

\begin{figure}
    \centering
    \begin{subfigure}[t]{0.49\textwidth}
        \centering
        \includegraphics[width=0.90\textwidth]{new_images/dual_gap_default_poisson_70_1.0-70_35.0_1.pdf}
        \caption{Number of integers variables $70$ and big M $=1.0$}
    \end{subfigure}
    \hfill
    \begin{subfigure}[t]{0.49\textwidth}
        \centering
        \includegraphics[width=0.90\textwidth]{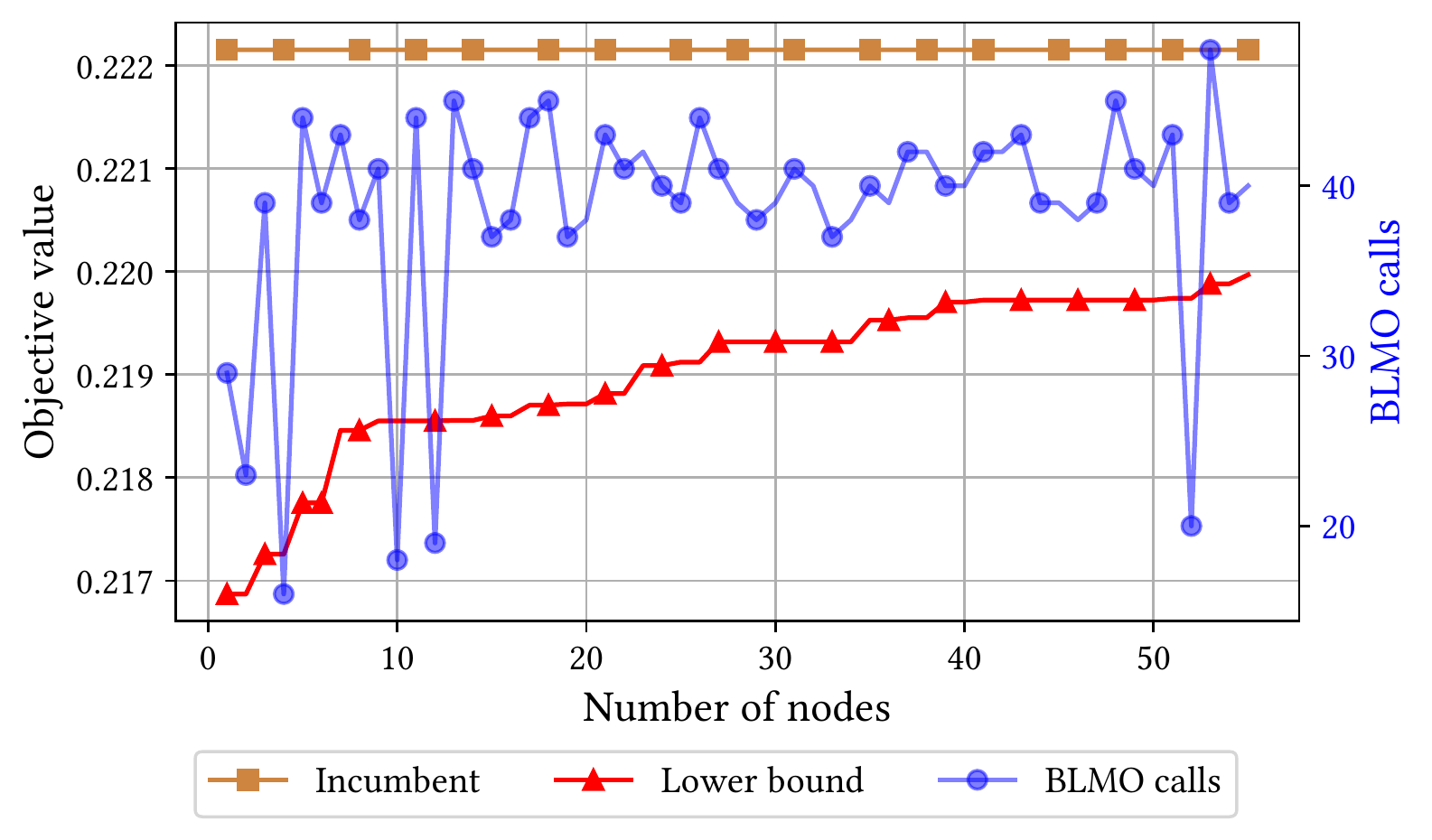}
        \caption{Number of integers variables $=70$ and big M $=10.0$}
    \end{subfigure}
    \caption{
    \revision{Progress plot of upper and lower bound for instances of the Poisson Regression Problem. On the left with the accumulated number of BLMO calls, and on the right with the BLMO calls per node.}
    }
    \label{fig:ProgressPoisson}
\end{figure}

\begin{figure}
    \centering
    \begin{subfigure}[t]{0.49\textwidth}
        \centering
        \includegraphics[width=0.90\textwidth]{new_images/dual_gap_default_sparse_log_regression_15_1.0-1_7.pdf}
        \caption{Number of integers variables $=75$}
    \end{subfigure}
    \hfill
    \begin{subfigure}[t]{0.49\textwidth}
        \centering
        \includegraphics[width=0.90\textwidth]{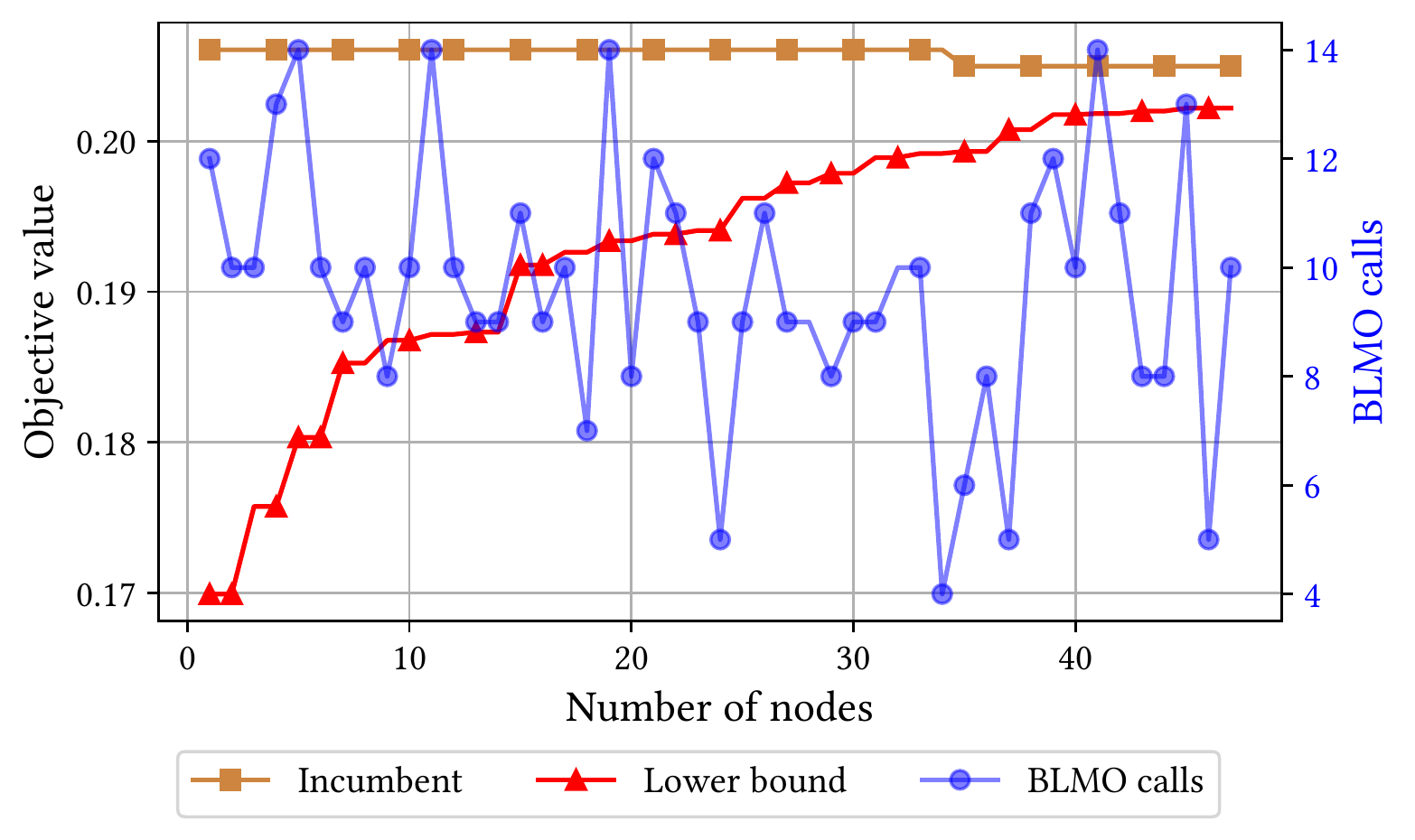}
        \caption{Number of integers variables $=25$}
    \end{subfigure}
    \caption{
    \revision{Progress plot of upper and lower bound. On the left with the accumulated number of BLMO calls, and on the right with the BLMO calls per node.}
    }
    \label{fig:ProgressSparseLogReg}
\end{figure}

\begin{figure}
    \centering
    \includegraphics[width=0.7\textwidth]{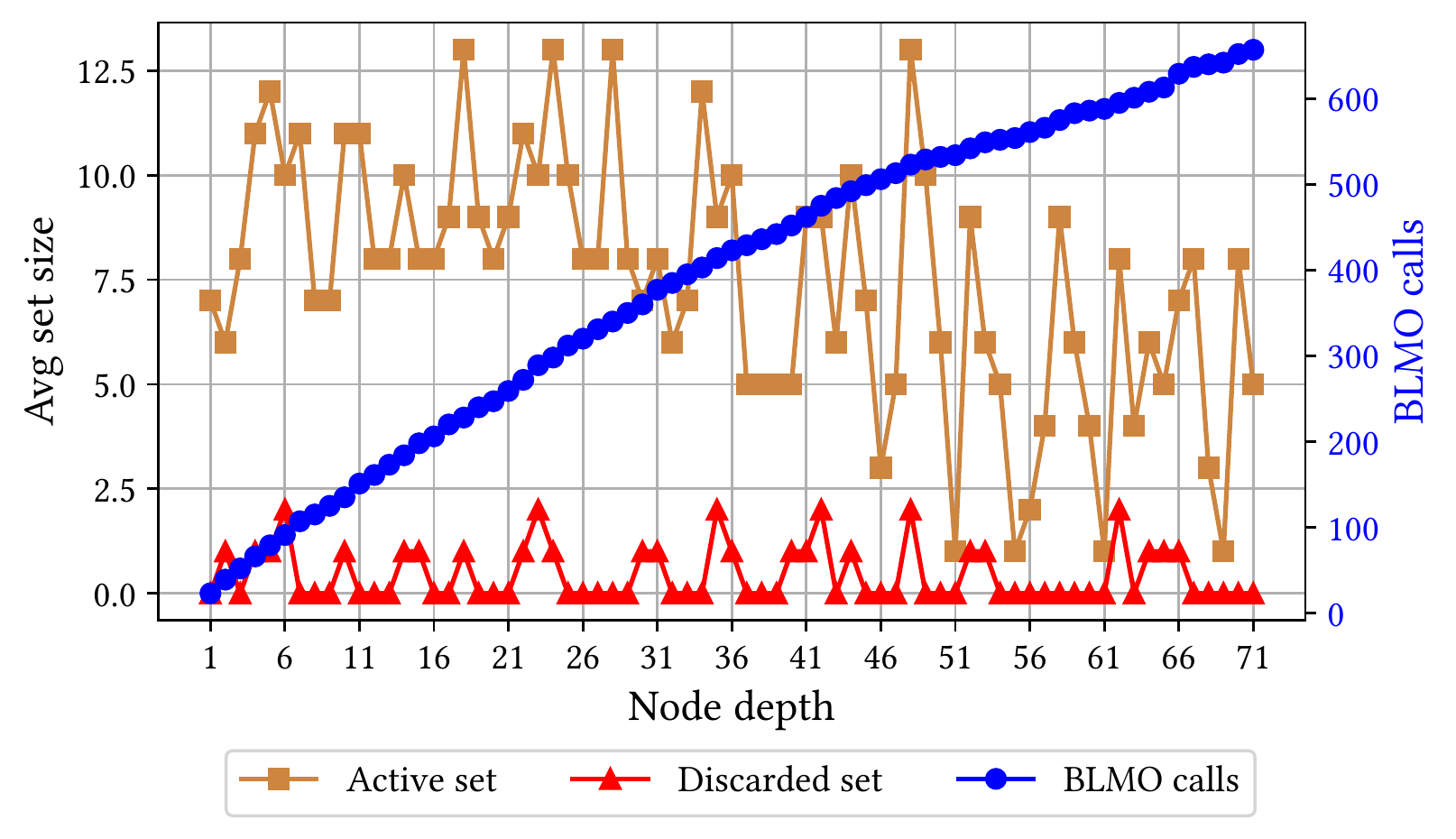}
    \caption{\revision{Evolution of the size of the active and discarded set in \package{} and the accumulated number of BLMO calls for the Sparse Regression Problem with $75$ integer variables.}}
    \label{fig:SizeASSparseReg}
\end{figure}

\begin{figure}
    \centering
    \includegraphics[width=0.7\textwidth]{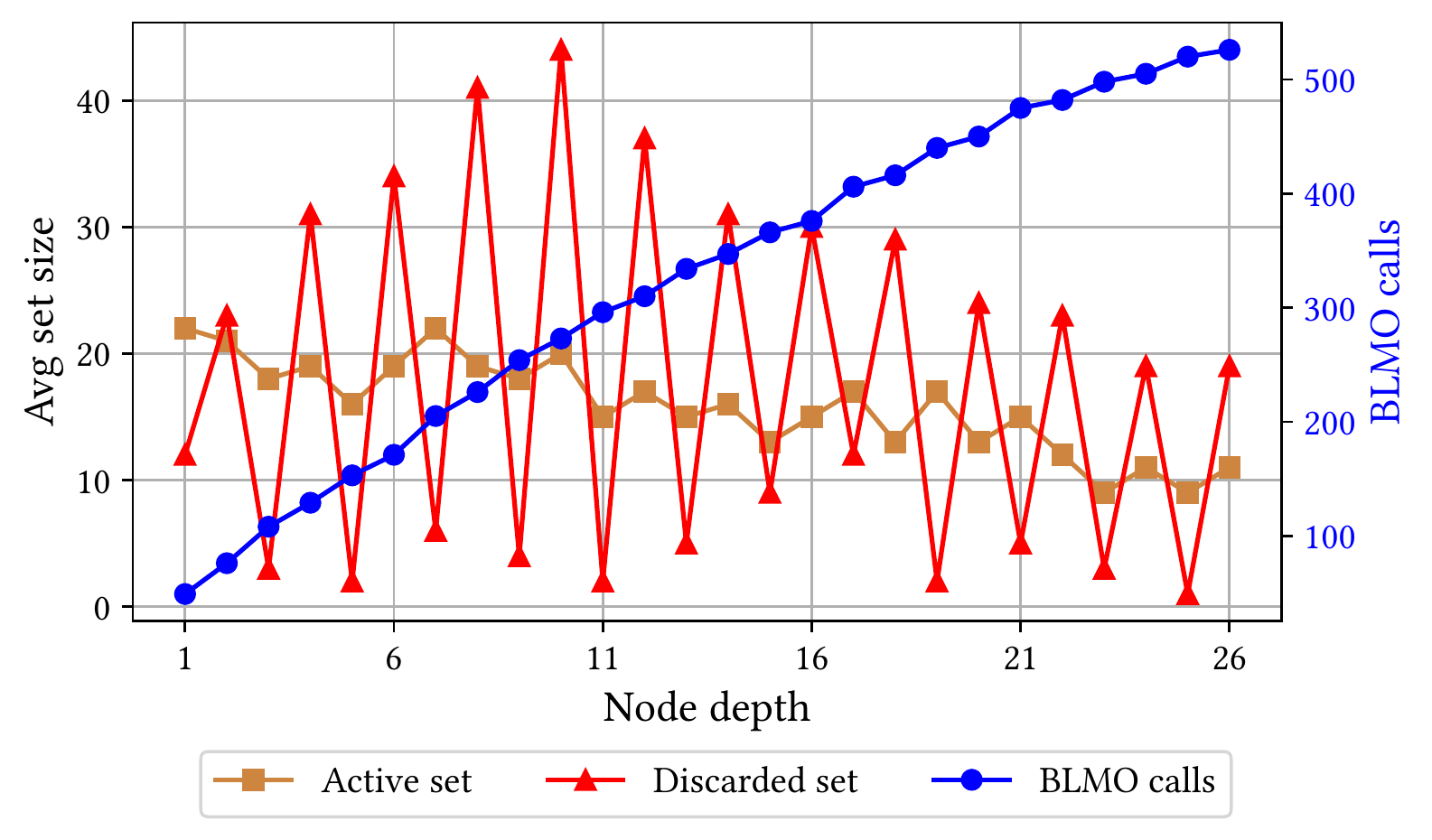}
    \caption{\revision{Evolution of the size of the active and discarded set in \package{} and the accumulated number of BLMO calls for the Mixed Integer Portfolio Problem with $15$ integer variables.}}
    \label{fig:SizeASMixedPortfolio}
\end{figure}

\begin{figure}
    \centering
    \includegraphics[width=0.7\textwidth]{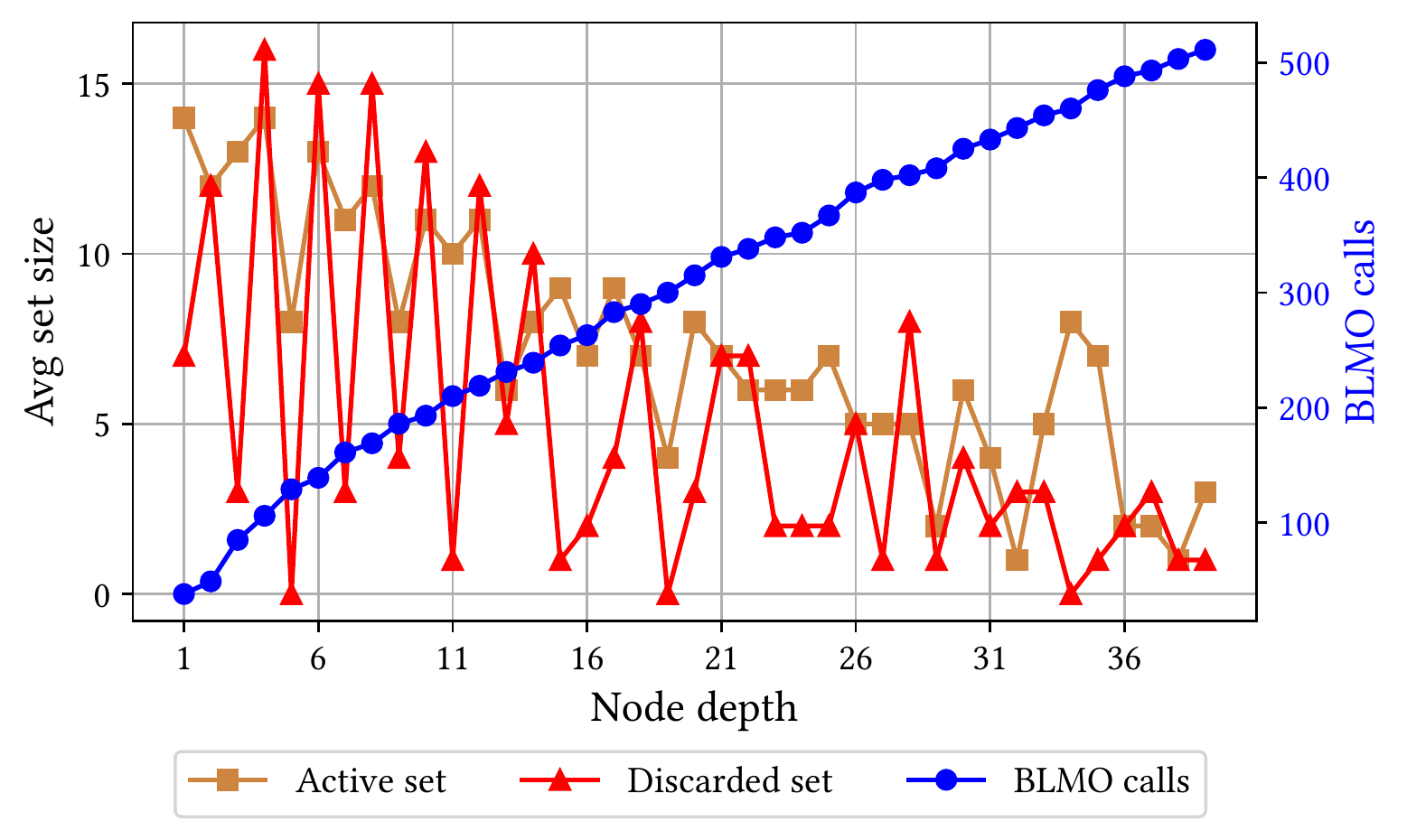}
    \caption{\revision{Evolution of the size of the active and discarded set in \package{} and the accumulated number of BLMO calls for the Pure Integer Portfolio Problem with $20$ integer variables.}}
    \label{fig:SizeASIntegerPortfolio}
\end{figure}

\begin{figure}
    \centering
    \includegraphics[width=0.7\textwidth]{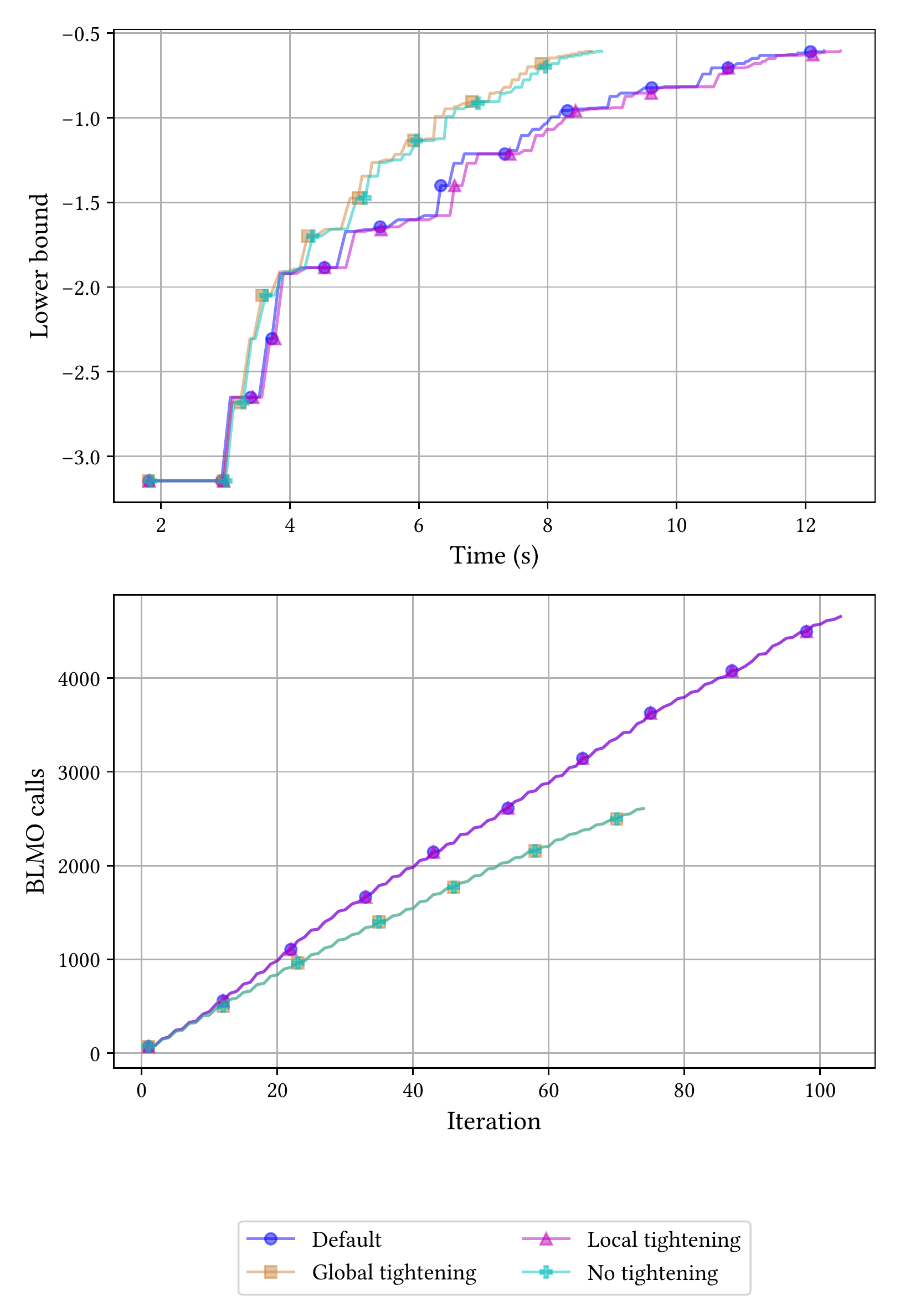}
    \caption{\revision{Comparing the effect of tightening on the lower bound and the accumulated number of BLMO calls for the Mixed Integer Portfolio Problem with $37$ integer variables.}}
    \label{fig:TighteningsMixedPortfolio}
\end{figure}

\begin{figure}
    \centering
    \includegraphics[width=0.7\textwidth]{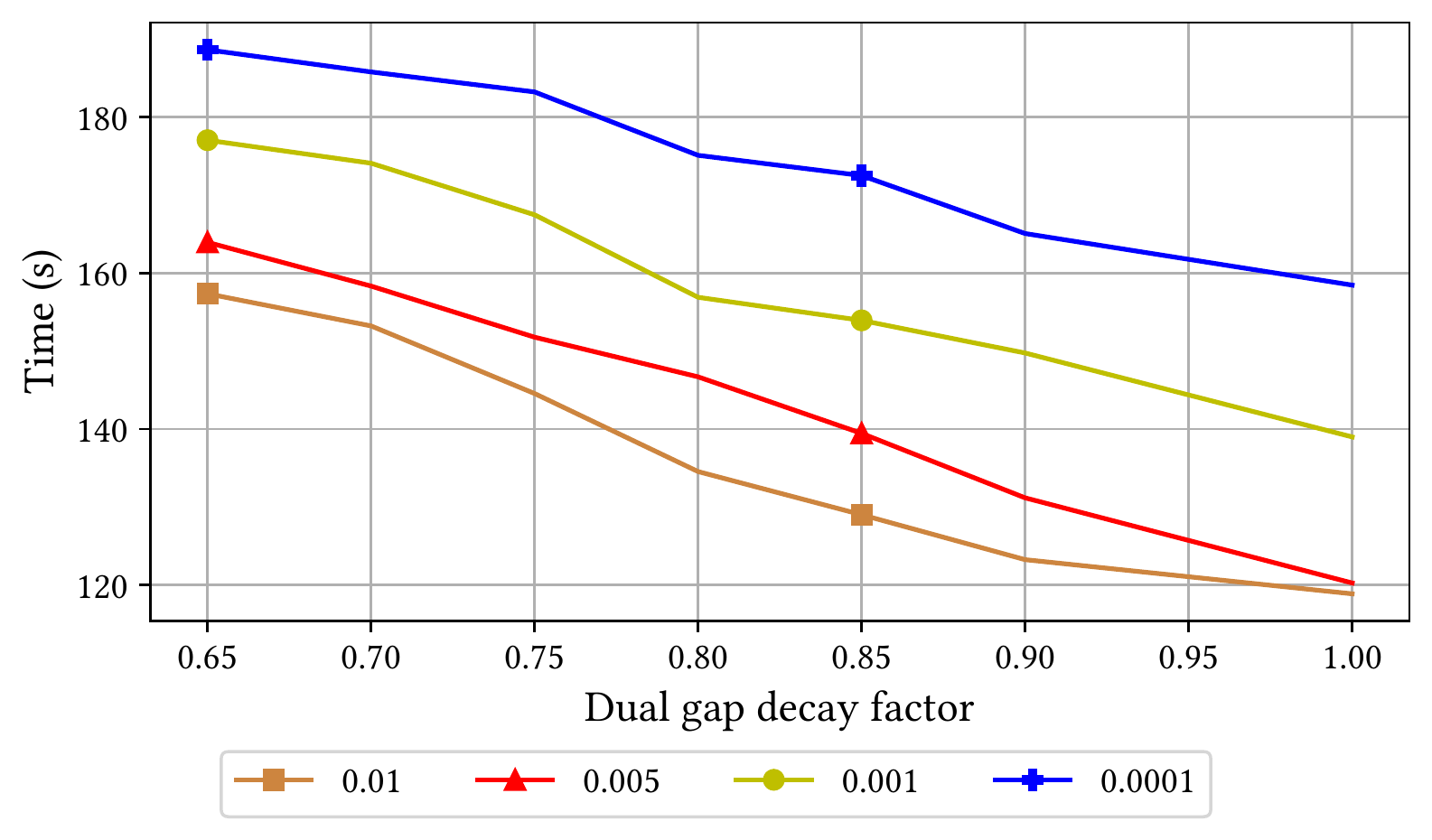}
    \caption{\revision{Comparing the effect of initial tolerance for Frank-Wolfe and dual decay factor on the solving time of the Sparse Regression Problem with $135$ integer variables.}}
    \label{fig:DualDecaySparseReg}
\end{figure}

\begin{figure}
\centering
\begin{subfigure}[t]{0.49\textwidth}
    \centering
    \includegraphics[width=0.9\textwidth]{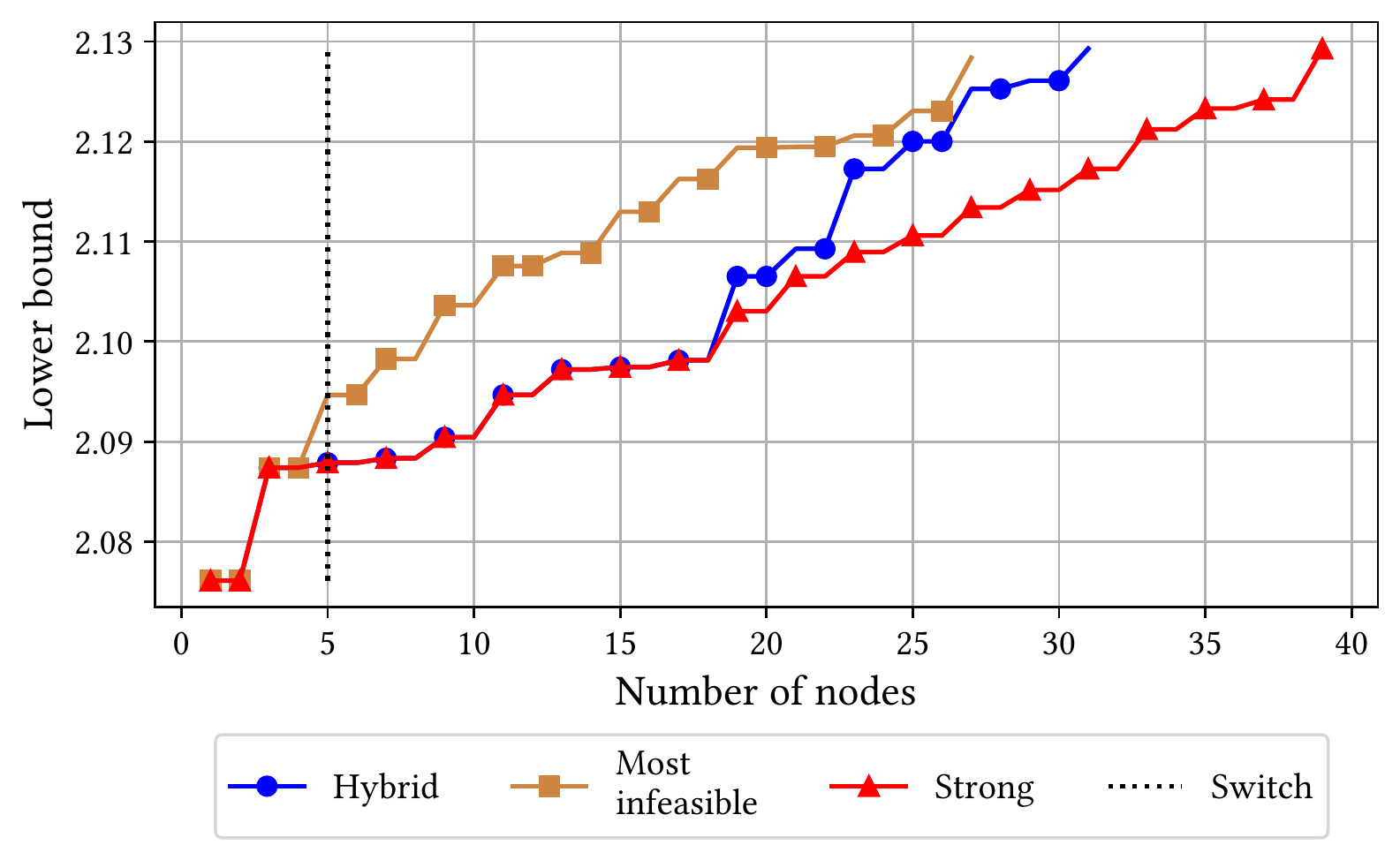}
    \caption{\revision{Lower bound progress over the number of nodes.}}
    \label{fig:BranchingNodesSparseReg}
\end{subfigure}
\hfill
\begin{subfigure}[t]{0.49\textwidth}
    \centering
    \includegraphics[width=0.9\textwidth]{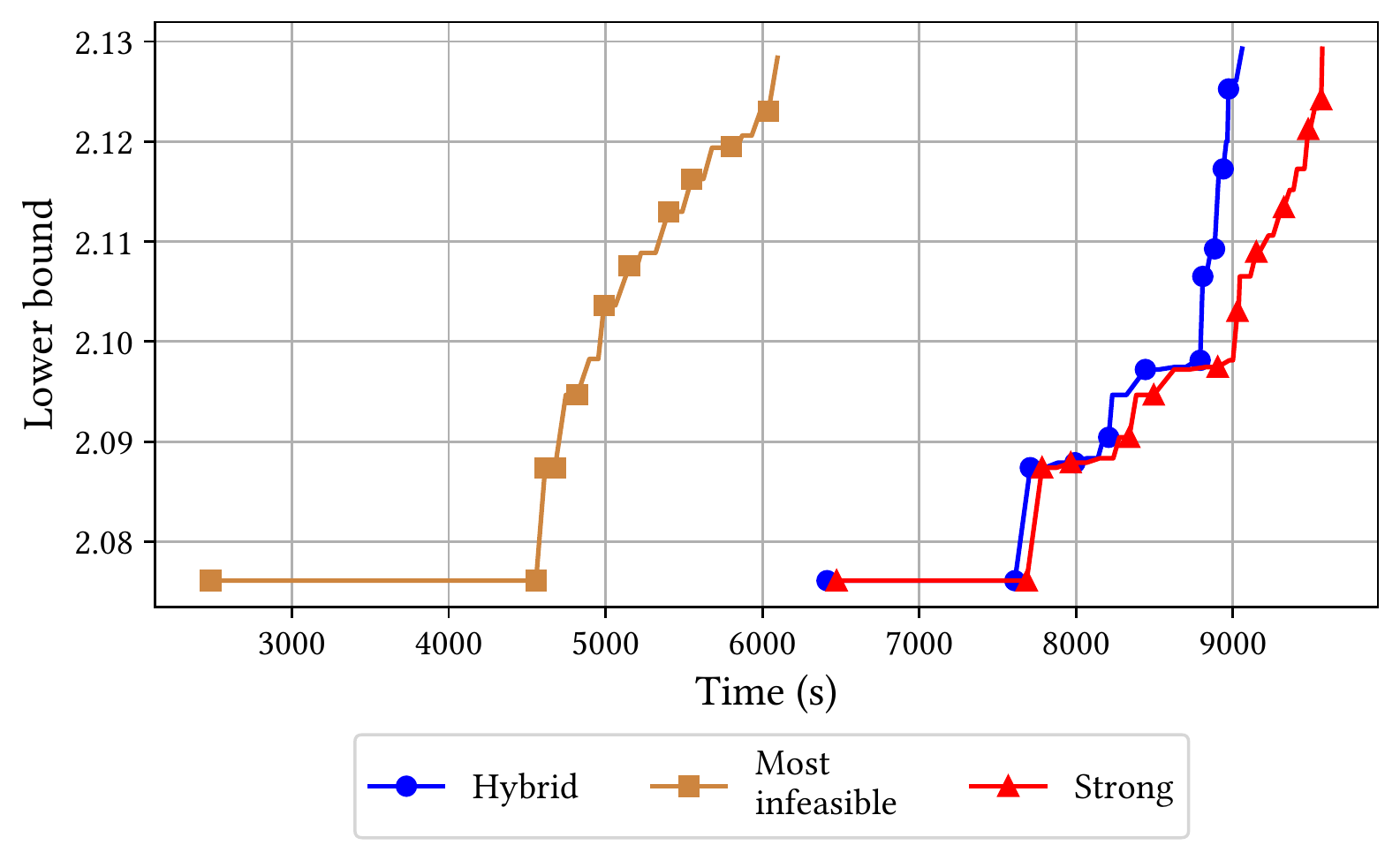}
    \caption{\revision{Lower bound progress over time.}}
    \label{fig:BranchingTimeSparseReg}
\end{subfigure}
\caption{\revision{Comparing the branching strategies on a Sparse Regression instance with 110 integer variables. In the hybrid branching, the depth setting is \# integer variables$/20$ is reached.}}
\label{fig:BranchingSparseReg}
\end{figure}

\begin{figure}
\centering
\begin{subfigure}[t]{0.49\textwidth}
    \centering
    \includegraphics[width=0.9\textwidth]{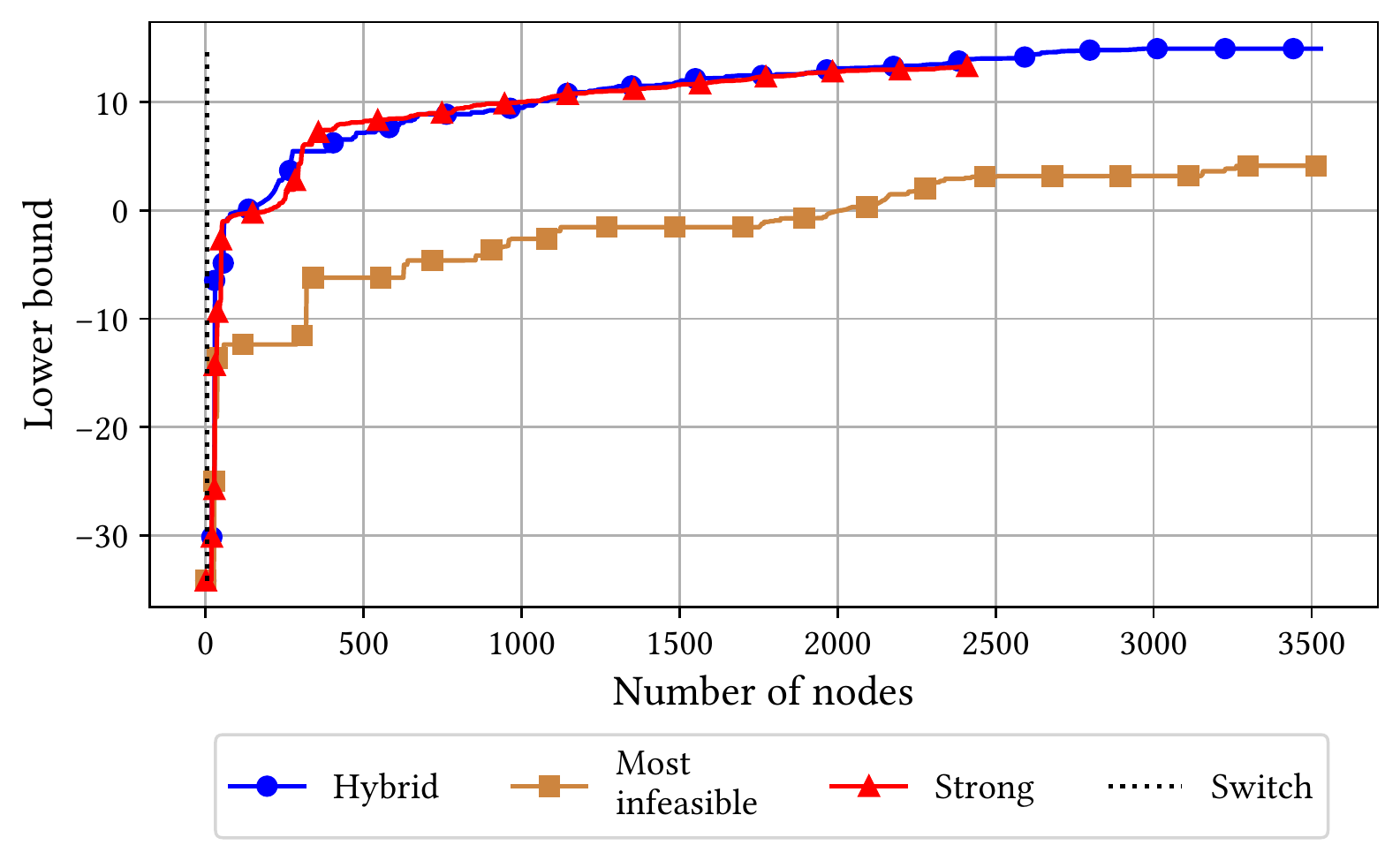}
    \caption{\revision{Lower bound progress over the number of nodes.}}
    \label{fig:BranchingNodesIntegerPortfolio}
\end{subfigure}
\hfill
\begin{subfigure}[t]{0.49\textwidth}
    \centering
    \includegraphics[width=0.9\textwidth]{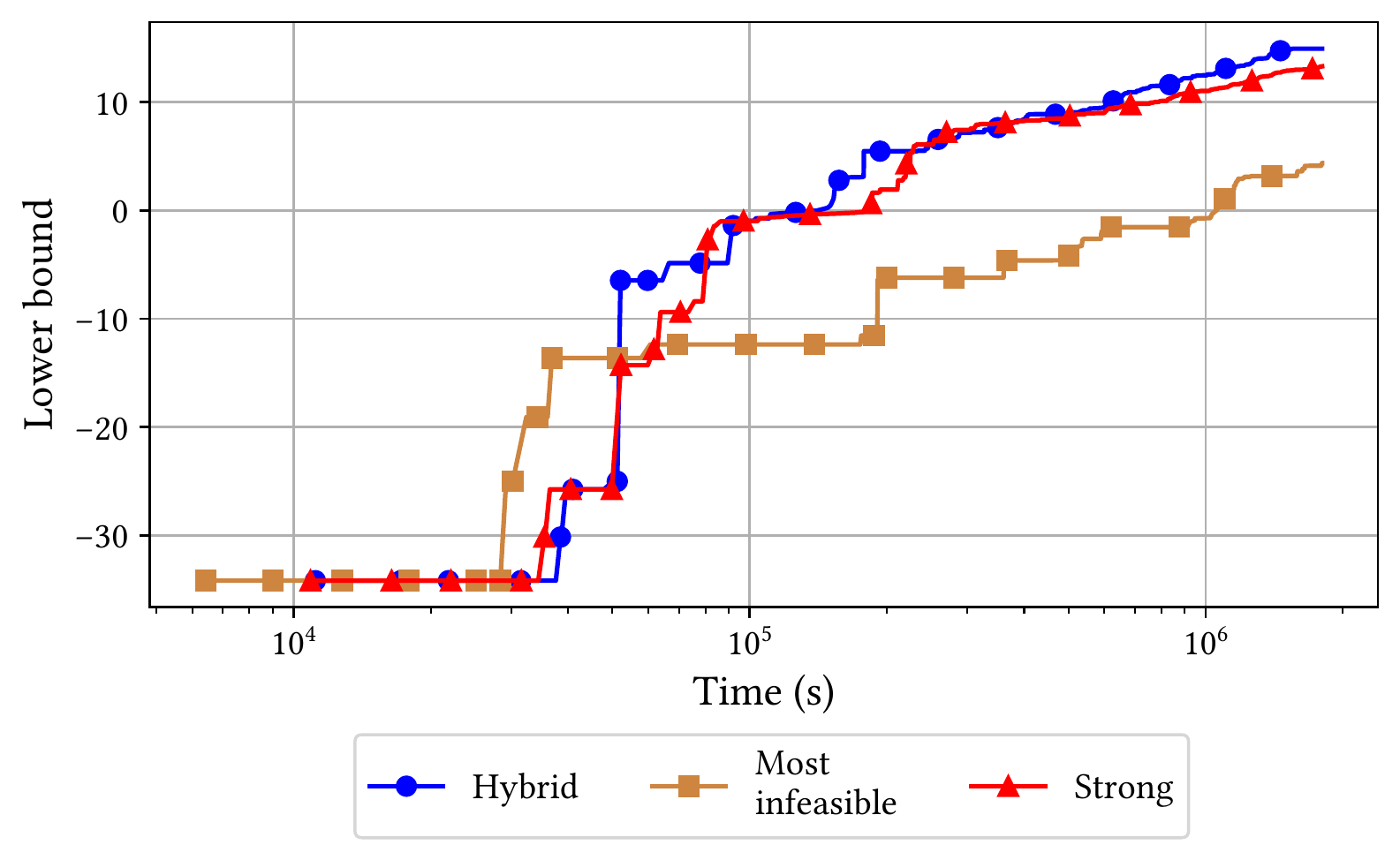}
    \caption{\revision{Lower bound progress over time.}}
    \label{fig:BranchingTimeIntegerPortfolio}
\end{subfigure}
\caption{\revision{Comparing the branching strategies on a Pure Integer Portfolio instance with 120 integer variables. In the hybrid branching, the depth setting is \# integer variables$/20$ is reached.}}
\label{fig:BranchingIntegerPortfolio}
\end{figure}

\newpage 

\begin{sidewaystable} 
    \centering
    \begin{tabular}{ll Hrrr Hrrr Hrrr Hrrr | Hrrr} 
        \toprule
        \multicolumn{2}{l}{MIPLIB \texttt{22433}} & \multicolumn{4}{c}{Boscia} & \multicolumn{4}{c}{B\&B Ipopt} & \multicolumn{4}{c}{SCIP+OA} & \multicolumn{4}{c}{Pavito} & \multicolumn{4}{c}{SHOT} \tabularnewline 
        
        \cmidrule(lr){3-6}
        \cmidrule(lr){7-10}
        \cmidrule(lr){11-14}
        \cmidrule(lr){15-18}
        \cmidrule(lr){19-22}

        \thead{Solved \\ after \\ (s)} & \thead{\# \\ inst.} & \thead{\# \\ solved} & \thead{\% \\ solved} & \thead{Time (s)} & \thead{Relative \\ Gap} & \thead{\# \\ solved} & \thead{\% \\ solved} & \thead{Time (s)} & \thead{Relative \\ Gap} & \thead{\# \\ solved} & \thead{\% \\ solved} & \thead{Time (s)} & \thead{Relative \\ Gap} & \thead{\# \\ solved} & \thead{\% \\ solved} & \thead{Time (s)} & \thead{Relative \\ Gap} & \thead{\# \\ solved} & \thead{\% \\ solved} & \thead{Time (s)} & \thead{Relative \\ Gap} \\

        \midrule

        0       & 15           & 15         & \textbf{100 \%}        & 5.33       & 0.0            & 15        & \textbf{100 \%}       & 30.32     & 0.0             & 15         & \textbf{100 \%}        & \textbf{1.69} & 0.0  & 15         & \textbf{100 \%}        & 6.64       & 0.0            & 15       & \textbf{100 \%}      & 19.77    & 0.0         \\
        \bottomrule
    \end{tabular}
    \caption{\footnotesize \revision{The MIPLIB instance \texttt{22433} with a quadtratic objective. The problem is a mixed binary problem with 231 binary variables and 429 variables in total. We choose between 4 and 8 vertices randomly to form the objective.}} 
    \label{tab:SummaryByDifficultyMIPLIP22433}

    \vspace*{1cm}
    \centering
    \begin{tabular}{ll Hrrr Hrrr Hrrr Hrrr | Hrrr} 
        \toprule
        \multicolumn{2}{l}{MIPLIB neos 5} & \multicolumn{4}{c}{Boscia} & \multicolumn{4}{c}{B\&B Ipopt} & \multicolumn{4}{c}{SCIP+OA} & \multicolumn{4}{c}{Pavito} & \multicolumn{4}{c}{SHOT} \tabularnewline 
        
        \cmidrule(lr){3-6}
        \cmidrule(lr){7-10}
        \cmidrule(lr){11-14}
        \cmidrule(lr){15-18}
        \cmidrule(lr){19-22}

        \thead{Solved \\ after \\ (s)} & \thead{\# \\ inst.} & \thead{\# \\ solved} & \thead{\% \\ solved} & \thead{Time (s)} & \thead{Relative \\ Gap} & \thead{\# \\ solved} & \thead{\% \\ solved} & \thead{Time (s)} & \thead{Relative \\ Gap} & \thead{\# \\ solved} & \thead{\% \\ solved} & \thead{Time (s)} & \thead{Relative \\ Gap} & \thead{\# \\ solved} & \thead{\% \\ solved} & \thead{Time (s)} & \thead{Relative \\ Gap} & \thead{\# \\ solved} & \thead{\% \\ solved} & \thead{Time (s)} & \thead{Relative \\ Gap} \\

        \midrule
        0       & 15           & 14         & 93 \%         & 30.66      & \textbf{0.01}  & 15        & \textbf{100 \%} & \textbf{19.0}      & \textbf{0.01} & 5          & 33 \%         & 517.4      & 0.04           & 5          & 33 \%         & 460.85     & 0.05           & 15       & \textbf{100 \%} & \textbf{0.51} & \textbf{0.01} \\
        \bottomrule
    \end{tabular}
    \caption{\footnotesize \revision{The MIPLIB instance \texttt{neos5} with a quadtratic objective. The problem is a mixed binary problem with 53 binary variables and 63 variables in total. We choose between 4 and 8 vertices randomly to form the objective.}} 
    \label{tab:SummaryByDifficultyMIPLIPneos5}
\end{sidewaystable}

\begin{sidewaystable}
    \centering
    \begin{tabular}{ll Hrrr Hrrr Hrrr Hrrr | Hrrr} 
        \toprule
        \multicolumn{2}{l}{MIPLIB \texttt{pg5\_34}} & \multicolumn{4}{c}{Boscia} & \multicolumn{4}{c}{B\&B Ipopt} & \multicolumn{4}{c}{SCIP+OA} & \multicolumn{4}{c}{Pavito} & \multicolumn{4}{c}{SHOT} \tabularnewline 
        
        \cmidrule(lr){3-6}
        \cmidrule(lr){7-10}
        \cmidrule(lr){11-14}
        \cmidrule(lr){15-18}
        \cmidrule(lr){19-22}

        \thead{Solved \\ after \\ (s)} & \thead{\# \\ inst.} & \thead{\# \\ solved} & \thead{\% \\ solved} & \thead{Time (s)} & \thead{Relative \\ Gap} & \thead{\# \\ solved} & \thead{\% \\ solved} & \thead{Time (s)} & \thead{Relative \\ Gap} & \thead{\# \\ solved} & \thead{\% \\ solved} & \thead{Time (s)} & \thead{Relative \\ Gap} & \thead{\# \\ solved} & \thead{\% \\ solved} & \thead{Time (s)} & \thead{Relative \\ Gap} & \thead{\# \\ solved} & \thead{\% \\ solved} & \thead{Time (s)} & \thead{Relative \\ Gap} \\

        \midrule
        0       & 15           & 0          & 0 \%          & 1800.0     & 0.02           & 0         & 0 \%         & 1800.07   & Inf           & 0          & 0 \%          & 1800.04    & 0.18           & 0          & 0 \%          & 1800.0     & Inf            & 12       & \textbf{80 \%} & \textbf{685.26} & \textbf{0.01} \\
        300     & 13           & 0          & 0 \%          & 1800.0     & 0.02           & 0         & 0 \%         & 1800.08   & Inf           & 0          & 0 \%          & 1800.04    & 0.18           & 0          & 0 \%          & 1800.0     & Inf            & 10       & \textbf{77 \%} & \textbf{795.4}  & \textbf{0.01} \\
        600     & 10           & 0          & 0 \%          & 1800.0     & 0.02           & 0         & 0 \%         & 1800.08   & Inf           & 0          & 0 \%          & 1800.04    & 0.18           & 0          & 0 \%          & 1800.0     & Inf            & 7        & \textbf{70 \%} & \textbf{896.47} & \textbf{0.01} \\
        \bottomrule
    \end{tabular}
    \caption{\footnotesize \revision{The MIPLIB instance \texttt{pg5\_34} with a quadratic objective. The problem is a mixed binary problem with 100 binary variables and 2600 variables in total. We choose between 4 and 8 vertices randomly to form the objective.}} 
    \label{tab:SummaryByDifficultyMIPLIPpg534}

    \vspace*{1cm}

    \centering
    \begin{tabular}{ll Hrrr Hrrr Hrrr Hrrr | Hrrr} 
        \toprule
        \multicolumn{2}{l}{MIPLIB \texttt{ran14x18-disj-8}} & \multicolumn{4}{c}{Boscia} & \multicolumn{4}{c}{B\&B Ipopt} & \multicolumn{4}{c}{SCIP+OA} & \multicolumn{4}{c}{Pavito} & \multicolumn{4}{c}{SHOT} \tabularnewline 
        
        \cmidrule(lr){3-6}
        \cmidrule(lr){7-10}
        \cmidrule(lr){11-14}
        \cmidrule(lr){15-18}
        \cmidrule(lr){19-22}

        \thead{Solved \\ after \\ (s)} & \thead{\# \\ inst.} & \thead{\# \\ solved} & \thead{\% \\ solved} & \thead{Time (s)} & \thead{Relative \\ Gap} & \thead{\# \\ solved} & \thead{\% \\ solved} & \thead{Time (s)} & \thead{Relative \\ Gap} & \thead{\# \\ solved} & \thead{\% \\ solved} & \thead{Time (s)} & \thead{Relative \\ Gap} & \thead{\# \\ solved} & \thead{\% \\ solved} & \thead{Time (s)} & \thead{Relative \\ Gap} & \thead{\# \\ solved} & \thead{\% \\ solved} & \thead{Time (s)} & \thead{Relative \\ Gap} \\

        \midrule

        0       & 15           & 0          & 0 \%          & 1802.86    & 0.09           & 0         & 0 \%         & 1800.16   & Inf           & 0          & 0 \%          & 1800.09    & 0.61           & 1          & \textbf{7 \%}          & \textbf{1797.79}    & 0.01           & 15       & \textbf{100 \%} & \textbf{16.65} & 0.04         \\
        10      & 13           & 0          & 0 \%          & 1802.67    & 0.08           & 0         & 0 \%         & 1800.18   & Inf           & 0          & 0 \%          & 1800.1     & 0.65           & 1          & \textbf{8 \%}          & \textbf{1797.45}    & 0.01           & 13       & \textbf{100 \%} & \textbf{18.19} & 0.03         \\
        \bottomrule
    \end{tabular}
    \caption{\footnotesize \revision{The MIPLIB instance \texttt{ran14x18-disj-8} with a quadtratic objective. The problem is a mixed binary problem with 252 binary variables and 504 variables in total. We choose between 4 and 8 vertices randomly to form the objective.}} 
    \label{tab:SummaryByDifficultyMIPLIPran14x18}
\end{sidewaystable}

\begin{sidewaystable}
    \centering
    \begin{tabular}{ll Hrrr Hrrr Hrrr Hrrr | Hrrr} 
        \toprule
        \multicolumn{2}{l}{Poisson Regression} & \multicolumn{4}{c}{Boscia} & \multicolumn{4}{c}{B\&B Ipopt} & \multicolumn{4}{c}{SCIP+OA} & \multicolumn{4}{c}{Pavito} & \multicolumn{4}{c}{SHOT} \tabularnewline 
        
        \cmidrule(lr){3-6}
        \cmidrule(lr){7-10}
        \cmidrule(lr){11-14}
        \cmidrule(lr){15-18}
        \cmidrule(lr){19-22}

        \thead{Solved \\ after \\ (s)} & \thead{\# \\ inst.} & \thead{\# \\ solved} & \thead{\% \\ solved} & \thead{Time (s)} & \thead{Relative \\ Gap} & \thead{\# \\ solved} & \thead{\% \\ solved} & \thead{Time (s)} & \thead{Relative \\ Gap} & \thead{\# \\ solved} & \thead{\% \\ solved} & \thead{Time (s)} & \thead{Relative \\ Gap} & \thead{\# \\ solved} & \thead{\% \\ solved} & \thead{Time (s)} & \thead{Relative \\ Gap} & \thead{\# \\ solved} & \thead{\% \\ solved} & \thead{Time (s)} & \thead{Relative \\ Gap} \\

        \midrule
        0 & 120 & 30 & 25 \% & 406.01 & 3.79 & 34 & 28 \% & 516.04 & 0.17 & 116 & \textbf{81 \%} & \textbf{81.74} & 0.4 & 29 & 24 \% & 885.84 & 0.22 & 100 & \textbf{83 \%} & 297.89 & \textbf{0.04} \\
        10 & 80 & 3 & 4 \% & 1508.16 & 6.09 & 7 & 9 \% & 1343.59 & 0.26 & 76 & \textbf{80 \%} & \textbf{125.57} & 0.58 & 26 & 32 \% & 708.2 & 0.24 & 73 & \textbf{91 \%} & 247.27 & \textbf{0.05} \\
        300 & 7 & 0 & 0 \% & 1800.03 & 75.92 & 0 & 0 \% & 1800.27 & 0.95 & 4 & \textbf{57 \%} & \textbf{732.71} & 3.66 & 1 & 14 \% & 1649.29 & 4.91 & 7 & \textbf{100 \%} & 774.04 & \textbf{0.2} \\
        600 & 2 & 0 & 0 \% & 1800.04 & 151.49 & 0 & 0 \% & 1800.25 & 1.32 & 0 & 0 \% & 1800.0 & 5.24 & 1 & 50 \% & 1325.4 & 4.91 & 2 & \textbf{100 \%} & \textbf{1190.21} & \textbf{0.46} \\
        \bottomrule
    \end{tabular}
    \caption{\footnotesize \revision{The Poisson Regression Problem. The instances have between 50 and 100 binary variables and the same number plus 1 of continuous variables.}} 
    \label{tab:SummaryByDifficultyPoisson}
\end{sidewaystable}

\begin{sidewaystable}
    \centering
    \begin{tabular}{ll Hrrr Hrrr Hrrr Hrrr | Hrrr} 
        \toprule
        \multicolumn{2}{l}{Pure Integer Portfolio} & \multicolumn{4}{c}{Boscia} & \multicolumn{4}{c}{B\&B Ipopt} & \multicolumn{4}{c}{SCIP+OA} & \multicolumn{4}{c}{Pavito} & \multicolumn{4}{c}{SHOT} \tabularnewline 
        
        \cmidrule(lr){3-6}
        \cmidrule(lr){7-10}
        \cmidrule(lr){11-14}
        \cmidrule(lr){15-18}
        \cmidrule(lr){19-22}

        \thead{Solved \\ after \\ (s)} & \thead{\# \\ inst.} & \thead{\# \\ solved} & \thead{\% \\ solved} & \thead{Time (s)} & \thead{Relative \\ Gap} & \thead{\# \\ solved} & \thead{\% \\ solved} & \thead{Time (s)} & \thead{Relative \\ Gap} & \thead{\# \\ solved} & \thead{\% \\ solved} & \thead{Time (s)} & \thead{Relative \\ Gap} & \thead{\# \\ solved} & \thead{\% \\ solved} & \thead{Time (s)} & \thead{Relative \\ Gap} & \thead{\# \\ solved} & \thead{\% \\ solved} & \thead{Time (s)} & \thead{Relative \\ Gap} \\

        \midrule
        0       & 210          & 158        & 75 \%         & 236.29     & \textbf{1.86}  & 196       & \textbf{93 \%}        & \textbf{26.97} & 1.89          & 30         & 14 \%         & 1162.41    & 54.59          & 10         & 5 \%          & 1656.61    & 0.01           & 210      & \textbf{100 \%} & 266.73           & 1.89         \\
        10      & 138          & 86         & 62 \%         & 707.09     & \textbf{2.94}  & 124       & \textbf{90 \%}        & \textbf{64.69} & 3.05          & 0          & 0 \%          & 1800.0     & 88.38          & 0          & 0 \%          & 1800.0     & Inf            & 138      & \textbf{100 \%} & 708.59           & 2.99         \\
        300     & 26           & 0          & 0 \%          & 1800.0     & 1.42           & 12        & \textbf{46 \%}        & \textbf{985.21}         & 1.47          & 0          & 0 \%          & 1800.0     & 1068.68        & 0          & 0 \%          & 1800.0     & Inf            & 26       & \textbf{100 \%} & 1671.56 & 1.81         \\
        600     & 17           & 0          & 0 \%          & 1800.0     & 1.41           & 3         & \textbf{18 \%}        & \textbf{1598.82}        & 1.72          & 0          & 0 \%          & 1800.0     & 2844.43        & 0          & 0 \%          & 1800.0     & Inf            & 17       & \textbf{100 \%} & 1800.0           & 2.04         \\
        1200    & 15           & 0          & 0 \%          & 1800.0     & 1.37           & 1         & \textbf{7 \%}         & \textbf{1771.51}        & Inf           & 0          & 0 \%          & 1800.0     & 4265.04        & 0          & 0 \%          & 1800.0     & Inf            & 15       & \textbf{100 \%} & 1800.0           & 2.08  \\
        \bottomrule
    \end{tabular}
    \caption{\footnotesize \revision{The Pure Integer Portfolio Problem. The instances have between 20 and 120 integral variables.}} 
    \label{tab:SummaryByDifficultyPortfolioInteger}

    \vspace*{1cm}

    \centering
    \begin{tabular}{ll Hrrr Hrrr Hrrr Hrrr | Hrrr} 
        \toprule
        \multicolumn{2}{l}{Mixed Integer Portfolio} & \multicolumn{4}{c}{Boscia} & \multicolumn{4}{c}{B\&B Ipopt} & \multicolumn{4}{c}{SCIP+OA} & \multicolumn{4}{c}{Pavito} & \multicolumn{4}{c}{SHOT} \tabularnewline 
        
        \cmidrule(lr){3-6}
        \cmidrule(lr){7-10}
        \cmidrule(lr){11-14}
        \cmidrule(lr){15-18}
        \cmidrule(lr){19-22}

        \thead{Solved \\ after \\ (s)} & \thead{\# \\ inst.} & \thead{\# \\ solved} & \thead{\% \\ solved} & \thead{Time (s)} & \thead{Relative \\ Gap} & \thead{\# \\ solved} & \thead{\% \\ solved} & \thead{Time (s)} & \thead{Relative \\ Gap} & \thead{\# \\ solved} & \thead{\% \\ solved} & \thead{Time (s)} & \thead{Relative \\ Gap} & \thead{\# \\ solved} & \thead{\% \\ solved} & \thead{Time (s)} & \thead{Relative \\ Gap} & \thead{\# \\ solved} & \thead{\% \\ solved} & \thead{Time (s)} & \thead{Relative \\ Gap} \\

        \midrule

        0       & 210          & 199        & 95 \%         & 19.47      & \textbf{0.72}  & 209       & \textbf{100 \%} & \textbf{2.82}    & 38.69         & 45         & 21 \%         & 903.36     & 10034.36       & 125        & 60 \%         & 137.68     & 64.96          & 210      & \textbf{100 \%} & 842.95   & 38.69         \\
        10      & 12           & 5          & 42 \%         & 889.18     & 0.36           & 11        & \textbf{92 \%}           & \textbf{128.81}  & \textbf{0.19} & 0          & 0 \%          & 1800.0     & 1919.83        & 0          & 0 \%          & 1800.0     & Inf            & 12       & \textbf{100 \%} & 1750.16  & 0.2           \\
        300     & 4            & 0          & 0 \%          & 1800.0     & 0.28           & 3         & \textbf{75 \%}           & \textbf{876.48}  & \textbf{0.21} & 0          & 0 \%          & 1800.0     & 1919.83        & 0          & 0 \%          & 1800.0     & Inf            & 4        & \textbf{100 \%} & 1800.0   & \textbf{0.21} \\
        600     & 3            & 0          & 0 \%          & 1800.0     & 0.35           & 2         & \textbf{67 \%}           & \textbf{1152.68} & \textbf{0.26} & 0          & 0 \%          & 1800.0     & Inf            & 0          & 0 \%          & 1800.0     & Inf            & 3        & \textbf{100 \%} & 1800.0   & 0.27          \\
        1200    & 1            & 0          & 0 \%          & 1800.0     & 0.31           & 0         & 0 \%            & 1800.0           & \textbf{0.23} & 0          & 0 \%          & 1800.0     & Inf            & 0          & 0 \%          & 1800.0     & Inf            & 1        & \textbf{100 \%} & 1800.0   & 0.25   \\
        \bottomrule
    \end{tabular}
    \caption{\footnotesize \revision{The Mixed Integer Portfolio Problem. The instances have between 20 and 120 variables with half of them being integral variables.}}
    \label{tab:SummaryByDifficultyPortfolioMixed}
\end{sidewaystable}

\begin{sidewaystable}
    \centering
    \begin{tabular}{ll Hrrr Hrrr Hrrr Hrrr | Hrrr} 
        \toprule
        \multicolumn{2}{l}{Sparse Regression} & \multicolumn{4}{c}{Boscia} & \multicolumn{4}{c}{B\&B Ipopt} & \multicolumn{4}{c}{SCIP+OA} & \multicolumn{4}{c}{Pavito} & \multicolumn{4}{c}{SHOT} \tabularnewline 
        
        \cmidrule(lr){3-6}
        \cmidrule(lr){7-10}
        \cmidrule(lr){11-14}
        \cmidrule(lr){15-18}
        \cmidrule(lr){19-22}

        \thead{Solved \\ after \\ (s)} & \thead{\# \\ inst.} & \thead{\# \\ solved} & \thead{\% \\ solved} & \thead{Time (s)} & \thead{Relative \\ Gap} & \thead{\# \\ solved} & \thead{\% \\ solved} & \thead{Time (s)} & \thead{Relative \\ Gap} & \thead{\# \\ solved} & \thead{\% \\ solved} & \thead{Time (s)} & \thead{Relative \\ Gap} & \thead{\# \\ solved} & \thead{\% \\ solved} & \thead{Time (s)} & \thead{Relative \\ Gap} & \thead{\# \\ solved} & \thead{\% \\ solved} & \thead{Time (s)} & \thead{Relative \\ Gap} \\

        \midrule
        0       & 160          & 147        & \textbf{92 \%}         & \textbf{25.14}      & \textbf{0.01}  & 137       & 86 \%        & 48.01     & \textbf{0.01} & 1        & 1 \% & 1720.39 & 0.35           & 56         & 35 \%         & 547.69     & \textbf{0.01}  & 160      & \textbf{100 \%} & 252.44   & \textbf{0.01} \\
        \bottomrule
    \end{tabular}
    \caption{\footnotesize \revision{The Sparse Regression Problem. The instances have between 150 and 300 variables with half of them being binary variables.}} 
    \label{tab:SummaryByDifficultySparseReg}

    \vspace*{1cm}

    \centering
    \begin{tabular}{ll Hrrr Hrrr Hrrr Hrrr | Hrrr} 
        \toprule
        \multicolumn{2}{l}{Sparse Log Regression} & \multicolumn{4}{c}{Boscia} & \multicolumn{4}{c}{B\&B Ipopt} & \multicolumn{4}{c}{SCIP+OA} & \multicolumn{4}{c}{Pavito} & \multicolumn{4}{c}{SHOT} \tabularnewline 
        
        \cmidrule(lr){3-6}
        \cmidrule(lr){7-10}
        \cmidrule(lr){11-14}
        \cmidrule(lr){15-18}
        \cmidrule(lr){19-22}

        \thead{Solved \\ after \\ (s)} & \thead{\# \\ inst.} & \thead{\# \\ solved} & \thead{\% \\ solved} & \thead{Time (s)} & \thead{Relative \\ Gap} & \thead{\# \\ solved} & \thead{\% \\ solved} & \thead{Time (s)} & \thead{Relative \\ Gap} & \thead{\# \\ solved} & \thead{\% \\ solved} & \thead{Time (s)} & \thead{Relative \\ Gap} & \thead{\# \\ solved} & \thead{\% \\ solved} & \thead{Time (s)} & \thead{Relative \\ Gap} & \thead{\# \\ solved} & \thead{\% \\ solved} & \thead{Time (s)} & \thead{Relative \\ Gap} \\

        \midrule

        0       & 48           & 15         & \textbf{31 \%}         & \textbf{387.38}     & 0.19           & 14        & 29 \%        & 592.41    & 0.17          & 8         & 17 \% & 665.15 & 0.53           & 7          & 15 \%         & 1121.07    & \textbf{0.02}  & 48       & \textbf{100 \%} & \textbf{69.9}     & 0.08         \\
        \bottomrule
    \end{tabular}
    \caption{\footnotesize \revision{The Sparse Log Regression Problem. The instances have between 50 and 200 variables with half of them being binary variables.}} 
    \label{tab:SummaryByDifficultySparseLogReg}
\end{sidewaystable}

\begin{sidewaystable}
    \centering
    \begin{tabular}{ll Hrrrr Hrrrr } 
        \toprule
        \multicolumn{2}{l}{Tailed Sparse Regression} & \multicolumn{5}{c}{Boscia} & \multicolumn{5}{c}{SCIP+OA}  \tabularnewline 
        
        \cmidrule(lr){3-7}
        \cmidrule(lr){8-12}

        \thead{Solved \\ after \\ (s)} & \thead{\# \\ inst.} & \thead{\# \\ solved} & \thead{\% \\ solved} & \thead{Time (s)} & \thead{Relative \\ Gap} & \thead{\# \\ nodes \\ cuts} & \thead{\# \\ solved} & \thead{\% \\ solved} & \thead{Time (s)} & \thead{Relative \\ Gap} & \thead{\# \\ nodes \\ cuts} \\

        \midrule
        0       & 160          & 160        & \textbf{100 \%} & \textbf{0.3} & \textbf{0.0}   & 59               & 79       &  49 \% & 52.59       & \textbf{0.0}    & 41               \\
        \bottomrule   
    \end{tabular}
    \caption{\footnotesize \revision{The Tailed Sparse Regression Problem. The instances have 2 or 3 binary variables and 4 to 9 variables in total.}} 
    \label{tab:SummaryByDifficultyTailedSparseReg}

    \vspace*{1cm}

    \centering
    \begin{tabular}{lr Hrrrr Hrrrr } 
        \toprule
        \multicolumn{2}{l}{Tailed Sparse Log Regression} & \multicolumn{5}{c}{Boscia} & \multicolumn{5}{c}{SCIP+OA}  \tabularnewline 
        
        \cmidrule(lr){3-7}
        \cmidrule(lr){8-12}

        \thead{Solved \\ after \\ (s)} & \thead{\# \\ inst.} & \thead{\# \\ solved} & \thead{\% \\ solved} & \thead{Time (s)} & \thead{Relative \\ Gap} & \thead{\# \\ nodes \\ cuts} & \thead{\# \\ solved} & \thead{\% \\ solved} & \thead{Time (s)} & \thead{Relative \\ Gap} & \thead{\# \\ nodes \\ cuts} \\

        \midrule

        0       & 48           & 48         & \textbf{100 \%} & \textbf{0.85} & \textbf{0.0}   & 36               & 36         & 75 \%         & 129.89     & 109.2          & 7130             \\
        \bottomrule
    \end{tabular}
    \caption{\footnotesize \revision{The Tailed Sparse Log Regression Problem. The instances have between 20 and 100 binary variables and 60 to 300 variables in total.}} 
    \label{tab:SummaryByDifficultyTailedSparseLogReg}
\end{sidewaystable}

\begin{sidewaystable}
    \centering
    \begin{tabular}{ll Hrrr Hrrr Hrrr Hrrr Hrrr} 
        \toprule
        \multicolumn{2}{l}{} & \multicolumn{4}{c}{Default} & \multicolumn{4}{c}{\thead{Global \\ Tightening}} & \multicolumn{4}{c}{\thead{Local \\ Tightening}} & \multicolumn{4}{c}{\thead{No \\ Tightening}} & \multicolumn{4}{c}{\thead{Strong \\ Convexity}}\tabularnewline 
        
        \cmidrule(lr){3-6}
        \cmidrule(lr){7-10}
        \cmidrule(lr){11-14}
        \cmidrule(lr){15-18}
        \cmidrule(lr){19-22}

        Problem & \thead{\# \\ inst.} & \thead{\# \\ solved} & \thead{\% \\ solved} & \thead{Time (s)} & \thead{Relative \\ Gap} & \thead{\# \\ solved} & \thead{\% \\ solved} & \thead{Time (s)} & \thead{Relative \\ Gap} & \thead{\# \\ solved} & \thead{\% \\ solved} & \thead{Time (s)} & \thead{Relative \\ Gap} & \thead{\# \\ solved} & \thead{\% \\ solved} & \thead{Time (s)} & \thead{Relative \\ Gap} & \thead{\# \\ solved} & \thead{\% \\ solved} & \thead{Time (s)} & \thead{Relative \\ Gap}  \\

        \midrule
        \thead{MIPLIB \texttt{22433}}  & 15 & 15 & \textbf{100 \%} & \textbf{5.33} & 0.0 & 15 & \textbf{100 \%} & 9.81 & 0.0 & 15 & \textbf{100 \%} & 10.26 & 0.0 & 15 & \textbf{100 \%} & 10.04 & 0.0 & 15 & \textbf{100 \%} & 9.73 & 0.0 \\
        \midrule
        \thead{MIPLIB \texttt{neos5}}  & 15 & 14 & \textbf{93 \%} & \textbf{30.66} & 0.01 & 14 & \textbf{93 \%} & 44.21 & 0.01 & 14 & \textbf{93 \%} & 42.82 & 0.01 & 14 & \textbf{93 \%} & 45.73 & 0.01 & 14 & \textbf{93 \%} & 33.28 & 0.01 \\
        \midrule
        \thead{MIPLIB \texttt{pg5\_34}}  & 15 & 0 & 0 \% & 1800.0 & 0.02 & 0 & 0 \% & 1800.0 & 0.02 & 0 & 0 \% & 1800.0 & 0.02 & 0 & 0 \% & 1800.0 & 0.02 & 0 & 0 \% & 1800.0 & 0.02 \\
        \midrule
        \thead{MIPLIB \texttt{ran14x18-disj-8}} & 15 & 0 & 0 \% & 1802.86 & 0.09 & 0 & 0 \% & 1804.77 & 0.09 & 0 & 0 \% & 1804.7 & 0.09 & 0 & 0 \% & 1804.79 & 0.09 & 0 & 0 \% & 1804.71 & 0.09 \\
        \midrule
        \thead{Poisson \\ Regression} & 120 & 30 & \textbf{25 \%} & \textbf{406.01} & 3.79 & 30 & \textbf{25 \%} & 451.8 & \textbf{0.31} & 30 & \textbf{25 \%} & 447.97 & \textbf{0.31} & 30 & \textbf{25 \%} & 501.87 & \textbf{0.31}  & & & & \\
        \midrule
        \thead{Integer \\ Portfolio}  & 210 & 158 & 75 \% & 236.29 & 1.86 & 175 & \textbf{83 \%} & \textbf{208.29} & \textbf{1.86} & 154 & 73 \% & 312.1 & 1.86 & 175 & \textbf{83 \%} & 214.79 & \textbf{1.86}  & & & &  \\
        \midrule
        \thead{Mixed  \\ Portfolio} & 210 & 199 & 95 \% & 19.47 & 0.72 & 202 & \textbf{96 \%} & \textbf{14.7} & 0.17 & 199 & 95 \% & 19.67 & 0.72 & 202 & \textbf{96 \%} & 16.49 & 0.17   & & & & \\
        \midrule
        \thead{Sparse \\ Regression} & 160 & 147 & \textbf{92 \%} & \textbf{25.14} & \textbf{0.01} & 144 & 90 \% & 38.81 & \textbf{0.01} & 147 & \textbf{92 \%} & 26.18 & \textbf{0.01} & 145 & 91 \% & 34.21 & \textbf{0.01}  & & & &  \\
        \midrule
        \thead{Sparse Log \\ Regression} & 48 & 15 & \textbf{31 \%} & \textbf{387.38} & 0.19 & 14 & 29 \% & 477.29 & 0.24 & 14 & 29 \% & 471.22 & 0.24 & 14 & 29 \% & 446.59 & 0.24   & & & & \\
        \midrule
        \thead{Tailed Sparse \\ Regression} & 160 & 160 & \textbf{100 \%} & \textbf{0.3} & 0.0 & 160 & \textbf{100 \%} & 4.6 & 0.0 & 160 & \textbf{100 \%} & 4.35 & 0.0 & 160 & \textbf{100 \%} & 5.6 & 0.0   & & & & \\
        \midrule
        \thead{Tailed Sparse \\ Log Regression}  & 48 & 48 & \textbf{100 \%} & 0.85 & 0.0 & 48 & \textbf{100 \%} & 4.78 & 0.0 & 48 & \textbf{100 \%} & 4.81 & 0.0 & 48 & \textbf{100 \%} & 6.63 & 0.0   & & & & \\
        \bottomrule
    \end{tabular}
    \caption{\footnotesize \revision{Investigating the effect of the tightening settings and exploitation of strong convexity on the performance.}} 
\label{tab:SummaryOfTighteningAndStrongConvexity}
\end{sidewaystable}

\begin{sidewaystable}
    \centering
    \begin{tabular}{ll Hrrr Hrrr Hrrr Hrrr Hrrr} 
        \toprule
        \multicolumn{2}{l}{} & \multicolumn{4}{c}{Default} & \multicolumn{4}{c}{\thead{Away \\ Frank-Wolfe}} & \multicolumn{4}{c}{\thead{No Warm Start \\ No Shadow Set}} & \multicolumn{4}{c}{\thead{No Warm Start}} & \multicolumn{4}{c}{\thead{No Shadow Set}}\tabularnewline 
        
        \cmidrule(lr){3-6}
        \cmidrule(lr){7-10}
        \cmidrule(lr){11-14}
        \cmidrule(lr){15-18}
        \cmidrule(lr){19-22}

        Problem & \thead{\# \\ inst.} & \thead{\# \\ solved} & \thead{\% \\ solved} & \thead{Time (s)} & \thead{Relative \\ Gap} & \thead{\# \\ solved} & \thead{\% \\ solved} & \thead{Time (s)} & \thead{Relative \\ Gap} & \thead{\# \\ solved} & \thead{\% \\ solved} & \thead{Time (s)} & \thead{Relative \\ Gap} & \thead{\# \\ solved} & \thead{\% \\ solved} & \thead{Time (s)} & \thead{Relative \\ Gap} & \thead{\# \\ solved} & \thead{\% \\ solved} & \thead{Time (s)} & \thead{Relative \\ Gap}  \\

        \midrule
        \thead{MIPLIB \texttt{22433}}  & 15 & 15 & \textbf{100 \%} & \textbf{5.33} & 0.0 & 15 & \textbf{100 \%} & 8.74 & 0.0 & 15 & \textbf{100 \%} & 6.6 & 0.0 & 15 & \textbf{100 \%} & 7.45 & 0.0 & 15 & \textbf{100 \%} & 8.57 & 0.0 \\
        \midrule
        \thead{MIPLIB \texttt{neos5}}  & 15 & 14 & \textbf{93 \%} & \textbf{30.66} & 0.01 & 14 & \textbf{93 \%} & 49.89 & 0.01 & 14 & \textbf{93 \%} & 43.26 & 0.01 & 14 & \textbf{93 \%} & 49.27 & 0.01 & 14 & \textbf{93 \%} & 43.56 & 0.01\\
        \midrule
        \thead{MIPLIB \texttt{pg5\_34}}  & 15 & 0 & 0 \% & 1800.0 & 0.02 & 0 & 0 \% & 1800.0 & 0.02 & 0 & 0 \% & 1800.0 & 0.02 & 0 & 0 \% & 1800.0 & 0.02 & 0 & 0 \% & 1800.0 & 0.02\\
        \midrule
        \thead{MIPLIB \texttt{ran14x18-disj-8}} & 15 & 0 & 0 \% & 1802.86 & 0.09 & 0 & 0 \% & 1804.68 & 0.11 & 0 & 0 \% & 1800.49 & 0.13 & 0 & 0 \% & 1803.26 & 0.12 & 0 & 0 \% & 1805.01 & \textbf{0.08}  \\
        \midrule
        \thead{Poisson \\ Regression} & 120 & 30 & 25 \% & 406.01 & 3.79 & 30 & 25 \% & 503.92 & \textbf{0.31} & 32 & \textbf{27 \%} & \textbf{345.31} & \textbf{0.31} & 32 & \textbf{27 \%} & 348.17 & \textbf{0.31} & 30 & 25 \% & 459.33 & \textbf{0.31} \\
        \midrule
        \thead{Pure \\ Integer \\ Portfolio}  & 210 & 158 & \textbf{75 \%} & \textbf{236.29} & \textbf{1.86} & 101 & 48 \% & 616.09 & 1.87 & 140 & 67 \% & 448.87 & 1.88 & 146 & 70 \% & 437.36 & 2.24 & 157 & \textbf{75 \%} & 314.28 & \textbf{1.86} \\
        \midrule
        \thead{Mixed \\ Integer \\ Portfolio} & 210 & 199 & \textbf{95 \%} & 19.47 & 0.72 & 159 & 76 \% & 48.11 & 1.75 & 198 & 94 \% & 25.59 & 0.89 & 198 & 94 \% & \textbf{19.46} & 0.83 & 199 & \textbf{95 \%} & 20.24 & \textbf{0.1}\\
        \midrule
        \thead{Sparse \\ Regression} & 160 & 147 & \textbf{92 \%} & \textbf{25.14} & \textbf{0.01} & 142 & 89 \% & 23.25 & \textbf{0.01} & 142 & 89 \% & 20.13 & \textbf{0.01} & 142 & 89 \% & 21.4 & 0.02 & 147 & \textbf{92 \%} & 26.35 & \textbf{0.01} \\
        \midrule
        \thead{Sparse Log \\ Regression} & 48 & 15 & \textbf{31 \%} & \textbf{387.38} & 0.19 & 14 & 29 \% & 469.42 & \textbf{0.18} & 14 & 29 \% & 428.38 & 0.29 & 14 & 29 \% & 411.88 & 0.29 & 15 & \textbf{31 \%} & 472.98 & 0.24  \\
        \midrule
        \thead{Tailed Sparse \\ Regression} & 160 & 160 & \textbf{100 \%} & \textbf{0.3} & 0.0 & 160 & \textbf{100 \%} & 0.97 & 0.0 & 160 & \textbf{100 \%} & 0.36 & 0.0 & 160 & \textbf{100 \%} & 0.35 & 0.0 & 160 & \textbf{100 \%} & 4.33 & 0.0\\
        \midrule
        \thead{Tailed Sparse \\ Log Regression}  & 48 & 48 & \textbf{100 \%} & 0.85 & 0.0 & 48 & \textbf{100 \%} & 1.36 & 0.0 & 48 & \textbf{100 \%} & \textbf{0.77} & 0.0 & 48 & \textbf{100 \%} & 0.84 & 0.0 & 48 & \textbf{100 \%} & 4.8 & 0.0 \\
        \bottomrule
    \end{tabular}
    \caption{\footnotesize \revision{Investigating the effect of the warm start settings on the performance.}} 
\label{tab:SummaryOfWarmStart}
\end{sidewaystable}

\begin{sidewaystable} \scriptsize
    \centering
    \begin{tabular}{ll Hrrr Hrrr Hrrr Hrrr Hrrr Hrrr Hrrr} 
        \toprule
        \multicolumn{2}{l}{} & \multicolumn{4}{c}{\thead{Most \\ Fractional \\ Branching}} & \multicolumn{4}{c}{\thead{Strong \\ Branching}} & \multicolumn{4}{c}{\thead{hybrid \\ branching \\ depth $=$ \#int vars$/1$}} & \multicolumn{4}{c}{\thead{hybrid \\ branching \\ depth $=$ \#int vars$/2$}} & \multicolumn{4}{c}{\thead{hybrid \\ branching \\ depth $=$ \#int vars$/5$}} & \multicolumn{4}{c}{\thead{hybrid \\ branching \\ depth $=$ \#int vars$/10$}} & \multicolumn{4}{c}{\thead{hybrid \\ branching \\ depth $=$ \#int vars$/20$}} \tabularnewline 
        
        \cmidrule(lr){3-6}
        \cmidrule(lr){7-10}
        \cmidrule(lr){11-14}
        \cmidrule(lr){15-18}
        \cmidrule(lr){19-22}
        \cmidrule(lr){23-26}
        \cmidrule(lr){27-30}

        Problem & \thead{\# \\ inst.} & \thead{\# \\ solved} & \thead{\% \\ solved} & \thead{Time (s)} & \thead{Relative \\ Gap} & \thead{\# \\ solved} & \thead{\% \\ solved} & \thead{Time (s)} & \thead{Relative \\ Gap} & \thead{\# \\ solved} & \thead{\% \\ solved} & \thead{Time (s)} & \thead{Relative \\ Gap} & \thead{\# \\ solved} & \thead{\% \\ solved} & \thead{Time (s)} & \thead{Relative \\ Gap} & \thead{\# \\ solved} & \thead{\% \\ solved} & \thead{Time (s)} & \thead{Relative \\ Gap} & \thead{\# \\ solved} & \thead{\% \\ solved} & \thead{Time (s)} & \thead{Relative \\ Gap} & \thead{\# \\ solved} & \thead{\% \\ solved} & \thead{Time (s)} & \thead{Relative \\ Gap}  \\

        \midrule
        \thead{MIPLIB \\ \texttt{22433}}  & 15 & 15 & \textbf{100 \%} & \textbf{5.33} & 0.0 & 15 & \textbf{100 \%} & 9.11 & 0.0 & 15 & \textbf{100 \%} & 9.84 & 0.0 & 15 & \textbf{100 \%} & 9.93 & 0.0 & 15 & \textbf{100 \%} & 9.94 & 0.0 & 15 & \textbf{100 \%} & 10.69 & 0.0 & 15 & \textbf{100 \%} & 9.83 & 0.0 \\
        \midrule
        \thead{MIPLIB \\ \texttt{neos5}}  & 15 & 14 & \textbf{93 \%} & \textbf{30.66} & 0.01 & 14 & \textbf{93 \%} & 76.12 & 0.01 & 14 & \textbf{93 \%} & 64.19 & 0.01 & 14 & \textbf{93 \%} & 72.81 & 0.01 & 14 & \textbf{93 \%} & 67.63 & 0.01 & 14 & \textbf{93 \%} & 55.4 & 0.01 & 14 & \textbf{93 \%} & 51.68 & 0.01 \\
        \midrule
        \thead{MIPLIB \\ \texttt{pg5\_34}}  & 15 & 0 & 0 \% & 1800.0 & 0.02 & 0 & 0 \% & 1800.0 & 0.02 & 0 & 0 \% & 1800.0 & 0.02 & 0 & 0 \% & 1800.0 & 0.02 & 0 & 0 \% & 1800.0 & 0.02 & 0 & 0 \% & 1800.0 & 0.02 & 0 & 0 \% & 1800.0 & 0.02 \\
        \midrule
        \thead{MIPLIB \\ \texttt{ran14x18-} \\ \texttt{disj-8}} & 15 & 0 & 0 \% & 1802.86 & 0.09 & 0 & 0 \% & 1805.62 & 0.1 & 0 & 0 \% & 1806.27 & 0.1 & 0 & 0 \% & 1806.1 & 0.1 & 0 & 0 \% & 1806.04 & 0.1 & 0 & 0 \% & 1806.04 & 0.1 & 0 & 0 \% & 1805.96 & 0.1  \\
        \midrule
        \thead{Poisson \\ Regression} & 120 & 30 & \textbf{25 \%} & \textbf{406.01} & 3.79 & 30 & \textbf{25 \%} & 576.79 & 0.34 & 30 & \textbf{25 \%} & 580.72 & 0.34 & 30 & \textbf{25 \%} & 559.74 & 0.34 & 30 & \textbf{25 \%} & 571.63 & 0.33 & 30 & \textbf{25 \%} & 567.96 & \textbf{0.32} & 30 & \textbf{25 \%} & 529.2 & \textbf{0.32} \\
        \midrule
        \thead{Pure \\ Integer \\ Portfolio}  & 210 & 158 & \textbf{75 \%} & \textbf{236.29} & \textbf{1.86} & 137 & 65 \% & 449.74 & 1.96 & 130 & 62 \% & 459.39 & 1.88 & 138 & 66 \% & 454.27 & 1.96 & 153 & 73 \% & 342.97 & 1.88 & 158 & \textbf{75 \%} & 315.37 & \textbf{1.86} & 158 & \textbf{75 \%} & 294.04 & 1.87 \\
        \midrule
        \thead{Mixed \\ Integer \\ Portfolio} & 210 & 199 & 95 \% & \textbf{19.47} & 0.72 & 201 & \textbf{96 \%} & 30.38 & 0.38 & 202 & \textbf{96 \%} & 31.33 & 0.41 & 201 & \textbf{96 \%} & 30.13 & 0.39 & 200 & 95 \% & 28.15 & 0.48 & 200 & 95 \% & 27.83 & 0.94 & 199 & 95 \% & 25.57 & \textbf{0.03} \\
        \midrule
        \thead{Sparse \\ Regression} & 160 & 147 & \textbf{92 \%} & \textbf{25.14} & \textbf{0.01} & 136 & 85 \% & 62.92 & \textbf{0.01} & 136 & 85 \% & 58.94 & 0.02 & 136 & 85 \% & 56.74 & 0.02 & 138 & 86 \% & 54.64 & 0.02 & 146 & 91 \% & 45.25 & 0.02 & 148 & \textbf{92 \%} & 38.46 & \textbf{0.01}  \\
        \midrule
        \thead{Sparse Log \\ Regression} & 48 & 15 & \textbf{31 \%} & \textbf{387.38} & \textbf{0.19} & 14 & 29 \% & 821.55 & 0.45 & 14 & 29 \% & 705.96 & 0.45 & 14 & 29 \% & 627.81 & 0.45 & 14 & 29 \% & 532.43 & 0.42 & 14 & 29 \% & 571.82 & 0.41 & 15 & \textbf{31 \%} & 545.52 & 0.23   \\
        \midrule
        \thead{Tailed \\ Sparse \\ Regression} & 160 & 160 & \textbf{100 \%} & \textbf{0.3} & 0.0 & 160 & \textbf{100 \%} & 12.84 & 0.0 & 160 & \textbf{100 \%} & 9.95 & 0.0 & 160 & \textbf{100 \%} & 9.33 & 0.0 & 160 & \textbf{100 \%} & 7.28 & 0.0 & 160 & \textbf{100 \%} & 7.27 & 0.0 & 160 & \textbf{100 \%} & 7.29 & 0.0 \\
        \midrule
        \thead{Tailed  \\ Sparse Log \\ Regression}  & 48 & 48 & \textbf{100 \%} & \textbf{0.85} & 0.0 & 48 & \textbf{100 \%} & 10.37 & 0.0 & 48 & \textbf{100 \%} & 9.34 & 0.0 & 48 & \textbf{100 \%} & 9.26 & 0.0 & 48 & \textbf{100 \%} & 9.34 & 0.0 & 48 & \textbf{100 \%} & 9.33 & 0.0 & 48 & \textbf{100 \%} & 9.37 & 0.0 \\
        \bottomrule
    \end{tabular}
    \caption{\footnotesize \revision{Investigating the effect of the different branching strategies on the performance.}} 
\label{tab:SummaryOfBranching}
\end{sidewaystable}

\end{document}